\pdfoutput=1

\documentclass[11pt]{article}

\usepackage{amsmath,amsthm,verbatim,amssymb,amsfonts,amscd}
\usepackage{comment}
\usepackage{empheq} 
\usepackage{mathabx}
\usepackage{mathrsfs} 
\usepackage{authblk} 
\usepackage{mathtools} 
\usepackage[shortlabels]{enumitem}
\usepackage[title]{appendix}

\usepackage{accents}


\usepackage[utf8]{inputenc}
\usepackage[T1]{fontenc}




\usepackage{graphicx}
\newcommand\smallO{
  \mathchoice
    {{\scriptstyle\mathcal{O}}}
    {{\scriptstyle\mathcal{O}}}
    {{\scriptscriptstyle\mathcal{O}}}
    {\scalebox{.7}{$\scriptscriptstyle\mathcal{O}$}}
  }

\topmargin0.0cm
\headheight0.0cm
\headsep0.0cm
\oddsidemargin0.0cm
\textheight23.0cm
\textwidth16.5cm
\footskip1.0cm

\theoremstyle{plain}
\newtheorem{theorem}{Theorem}[section]
\newtheorem{corollary}[theorem]{Corollary}
\newtheorem{lemma}[theorem]{Lemma}
\newtheorem{proposition}[theorem]{Proposition}

\newtheorem{remark}[theorem]{Remark}

\theoremstyle{definition}

\numberwithin{equation}{section}


\newcommand{\R}{\mathbb{R}}

\newcommand{\N}{\mathbb{N}}
\newcommand{\Z}{\mathbb{Z}}

\newcommand{\mx}{\mathrm{d}x}
\newcommand{\my}{\mathrm{d}y}


\DeclareMathOperator{\dist}{{\rm dist}}

\DeclareMathOperator{\FT}{\mathcal{F}}           
\DeclareMathOperator{\IFT}{\mathcal{F}^{\,-1}}   


\DeclareMathOperator{\Dil}{\mathrm{Dil}}         



\begin{document}

\title{Strichartz estimates for mixed homogeneous surfaces in three dimensions}

\author{Ljudevit Palle
\thanks{The author has been supported by Deutsche Forschungsgemeinschaft,
project number 237750060, \emph{Fragen der Harmonischen Analysis im Zusammenhang mit Hyperfl\"{a}chen.}
}
}

\affil{Christian-Albrechts-Universit\"{a}t zu Kiel}

\date{\today}

\maketitle

\begin{abstract}
We obtain sharp mixed norm Strichartz estimates associated to mixed homogeneous surfaces in $\R^3$.
Both cases with and without a damping factor are considered.
In the case when a damping factor is considered our results yield
a wide generalization of a result of Carbery, Kenig, and Ziesler \cite{CKZ13}.
The approach we use is to first classify all possible singularities locally,
after which one can tackle the problem by appropriately modifying the methods from the paper of Ginibre and Velo \cite{GV92},
and by using the recently developed methods by Ikromov and M\"{u}ller \cite{IM16}.
\end{abstract}

\tableofcontents




\section{Introduction}
\label{section_intro}

Let us fix a pair $\alpha = (\alpha_1, \alpha_2) \in (0, \infty)^2$,
denote $|\alpha| \coloneqq \alpha_1 + \alpha_2$, and introduce
its associated $\alpha$-mixed homogeneous dilations in $\R^2$ by
\begin{align*}
\delta_r(x_1, x_2) = (r^{\alpha_1} x_1, r^{\alpha_2} x_2), \qquad r > 0.
\end{align*}
The main goal of this article is to study Strichartz estimates for a fixed mixed homogeneous surface $S$,
i.e., a surface given as the graph of a fixed smooth function $\phi : \R^2 \setminus \{0\} \to \R$
which is $\alpha$-mixed homogeneous of degree $\rho$:
\begin{align}
\label{section_intro_alpha_dilations}
\phi \circ \delta_r(x_1, x_2) = r^{\rho} \phi(x_1,x_2), \qquad r > 0.
\end{align}
We may and shall assume without loss of generality that $\rho \in \{-1, 0, 1\}$.
Both $\alpha$ and $\rho$ shall be fixed throughout the article.
Note that when $\rho = -1$ the function $\phi$ has a singularity at the origin.

As is well known,
Strichartz estimates are directly related to Fourier restriction estimates
and we shall be in particular interested in the mixed norm estimate
\begin{align}
\label{section_intro_restriction_estimate}
\Vert \widehat{f} \Vert_{L^2(\mathrm{d}\mu)} \leq C \Vert f\Vert_{L^{p_3}_{x_3}(L^{p_1}_{(x_1,x_2)})}, \qquad f \in \mathcal{S}(\R^3),
\end{align}
where $\mu$ is the surface measure
\begin{align}
\label{section_intro_restriction_measure}
\langle \mu, f \rangle = \int_{\R^2\setminus\{0\}} f(x_1,x_2,\phi(x_1,x_2)) \, \mathcal{W}(x_1,x_2) \, \mathrm{d}x
\end{align}
and $p = (p_1,p_3) \in (1,2)^2$.
The weight $\mathcal{W} \geq 0$ is added in order to insure that the measure has a scaling invariance
which will enable us to reduce global estimates to local ones by a Littlewood-Paley argument.
We take $\mathcal{W}$ to be $\alpha$-mixed homogeneous of degree $2 \vartheta$
and consider two particular cases.
The function $\mathcal{W}$ will be either equal to
\begin{align}
\label{section_intro_weight_growth}
|x|_{\alpha}^{2\vartheta} = (|x_1|^{1/\alpha_1} + |x_2|^{1/\alpha_2})^{2\vartheta},
\end{align}
%
or equal to the Hessian determinant of $\phi$ (denoted by $\mathcal{H}_\phi$)
raised to the power $|\cdot|^{\sigma}$, $\sigma \in [0, 1/2)$, i.e.,
\begin{align}
\label{section_intro_weight_Hessian}
|\mathcal{H}_\phi(x)|^{\sigma} =
\Bigg|
\det
\begin{bmatrix}
    \partial^2_{x_1} \phi & \partial_{x_1} \partial_{x_2} \phi \\
    \partial_{x_1} \partial_{x_2} \phi & \partial^2_{x_2} \phi
\end{bmatrix}
\Bigg|^{\sigma}.
\end{align}
%
One can easily show that the Hessian determinant of $\phi$ is $\alpha$-mixed homogeneous of degree $2(\rho-|\alpha|)$,
and so in the case when $\mathcal{W}$ equals \eqref{section_intro_weight_Hessian}
the relation between $\vartheta$ and $\sigma$ is $\vartheta = \sigma(\rho-|\alpha|)$.
We shall later determine $\vartheta$ in Subsection \ref{subsection_prelim_scaling}
(and in particular in Proposition \ref{prop_prelim_scaling})
so that the Fourier restriction estimate for $\mu$ is invariant under scaling.
This choice depends in general on $p = (p_1,p_3)$.

\medskip

Oscillatory integrals, Fourier restriction estimates, and other problems
related to homogeneous and mixed homogeneous surfaces have already been previously studied
and a sample of such works are
\cite{DZ19}, \cite{Schwend}, \cite{Gb18}, \cite{IKM05}, \cite{IS96}, \cite{FGU04}, \cite{FU09}, \cite{CKZ13}.

The case of general $L^p-L^2$ Fourier restriction in $\R^3$ with respect to the Euclidean measure
was recently solved in \cite{IM16} by Ikromov and M\"uller
for a wide class of smooth surfaces in $\R^3$, including all the analytic ones.
Mixed norm estimates were shown in \cite{Pa19} for surfaces given as graphs of functions $\phi$ in adapted coordinates
and also for analytic functions $\phi$ satisfying $h_{\text{lin}}(\phi) < 2$.

In \cite{CKZ13} Carbery, Kenig, and Ziesler considered
the case with the weight \eqref{section_intro_weight_Hessian} for ``isotropically'' homogeneous
(i.e., when $\alpha_1 = \alpha_2$) polynomials $\phi$.
Since the weight \eqref{section_intro_weight_Hessian} has roots at the degenerate points,
the estimate \eqref{section_intro_restriction_estimate} holds for a wider range of exponents
compared to the case when the weight \eqref{section_intro_weight_growth} is used.
The use of this so called mitigating or damping factor goes back to Sj\"{o}lin \cite{Sj74} (see also \cite{CDMM90}, \cite{Dr90}, \cite{KPV91}).
When one uses the above damping factor \eqref{section_intro_weight_Hessian}
one can even obtain estimates for certain classes of flat surfaces \cite{CZ02}, \cite{AS06}, \cite{CKZ07}.
On the other hand, weak type estimates have been obtained by Oberlin in the Tomas-Stein range in \cite{Ob12}
for a wide class of surfaces having a bounded generic multiplicity (see also \cite{Ob04}).
Let us also mention a recent result by Gressman \cite{Gm16}
where he obtained decay estimates for damping oscillatory integrals for a certain class of singularities.

In this article we shall first classify the possible local singularities
for mixed homogeneous surfaces (see Proposition \ref{proposition_normal_forms} below)
and then either apply the Fourier restriction estimates obtained in \cite{IM16} and \cite{Pa19},
or use the techniques from these articles, and also from the article \cite{GV92} (see also \cite{KT98}), to obtain sharp estimates.
In particular, we obtain a wide generalization of the Fourier restriction estimate in \cite{CKZ13}
with methods which are more elementary,
and in particular avoiding any use of results from algebraic topology or algebraic geometry.


\medskip
In order to state the main results of this paper
(namely, Theorem \ref{theorem_Strichartz_mitigating}, Theorem \ref{theorem_Strichartz},
Proposition \ref{proposition_normal_forms}, and Corollary \ref{corollary_Strichartz_equation})
we first recall certain concepts and introduce a few conditions.
Recall that we call a smooth function $\varphi : \Omega \subseteq \R^n \to \R$ a function of \emph{finite type} at $P \in \Omega$
if there exists a multiindex $\tau$ such that $\partial^\tau \varphi (P) \neq 0$.
We shall consider the following two conditions on $\phi$:
\begin{itemize}
\item[{\bf (H1)}]
At any given point $(x_1,x_2) \neq (0,0)$ where the Hessian determinant of $\phi$ vanishes
at least one of the mappings
$t \mapsto \partial_1^2 \phi(t, x_2)$ or
$t \mapsto \partial_2^2 \phi(x_1, t)$ is of finite type at $t = x_1$ (resp. $t = x_2$).
\item[{\bf (H2)}]
The Hessian determinant $\mathcal{H}_\phi$ is not flat at any point $x \neq 0$.
\end{itemize}
It actually suffices to check the conditions only at points $(x_1,x_2)$ in say a unit circle, by homogeneity.
Furthermore, we remark that the condition (H2) is stronger
than the condition (H1) (this follows from the calculations in Subsection \ref{subsection_forms_flat} below).

Let us now introduce a further condition and two new quantities.
For a point $v \in \R^2 \setminus \{0\}$ let us define the function
\begin{align*}
\phi_v(x) \coloneqq \phi(x+v) - \phi(v) - x \cdot \nabla \phi(v).
\end{align*}
Then we shall often consider whether the following condition is satisfied at $v$:
\begin{itemize}
\item[{\bf (LA)}]
There is a linear coordinate change which is adapted to $\phi_v$ at the origin.
\end{itemize}
Compare with the negation of this condition in \cite[Section 1.2]{IM16}.

Let us furthermore denote the \emph{linear height} of $\phi_v$ by $h_{\text{lin}}(\phi,v)$ and its \emph{Newton height} by $h(\phi,v)$.
Both linear height and Newton height are defined in the usual manner (see \cite{IM16}, \cite{Pa19}).
We define the \emph{global linear height} $h_{\text{lin}}(\phi)$ and the \emph{global Newton height} $h(\phi)$
by the respective expressions
\begin{align}
\label{subsection_intro_results_global_heights}
h_{\text{lin}}(\phi) &= \sup_{v \in \mathbb{S}^1} h_{\text{lin}}(\phi,v), & h(\phi) &= \sup_{v \in \mathbb{S}^1} h(\phi,v).
\end{align}
It will be clear from Section \ref{section_forms} that
$h_{\text{lin}}(\phi,v)$ and $h(\phi,v)$ do not change along the \emph{homogeneity curve through} $v$
defined as the curve
\begin{align*}
r \mapsto (r^{\alpha_1} v_1, r^{\alpha_2} v_2), \qquad r > 0,
\end{align*}
and therefore in the above definitions of global linear height and global Newton height
one could have taken the supremum over the set $\R^2 \setminus \{0\}$ too.

\begin{theorem}
\label{theorem_Strichartz_mitigating}
Let $\phi$ be mixed homogeneous satisfying condition (H2).
Let $\mu$ be the measure defined as in \eqref{section_intro_restriction_measure}
with $\mathcal{W}(x) = |\mathcal{H}_\phi(x)|^{\sigma}$ for some fixed $\sigma \geq 0$.
If $\sigma \in [0,\frac{1}{3}]$, then the Fourier restriction estimate \eqref{section_intro_restriction_estimate} holds true for
\begin{align*}
\Big(\frac{1}{p_1'}, \frac{1}{p_3'}\Big) = \Big(\frac{1}{2} - \sigma, \sigma \Big).
\end{align*}
If (LA) is satisfied at all points $v \neq 0$, then the estimate holds true even if $\sigma \in [0,\frac{1}{2})$.
In particular, if $\alpha_1 = \alpha_2$, then (LA) is satisfied at all points $v \neq 0$
and the estimate holds true for any $\sigma \in [0,\frac{1}{2})$.
\end{theorem}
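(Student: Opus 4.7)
The plan is to turn the theorem into a family of local estimates on a fundamental $\alpha$-annulus and then prove each local estimate with the method best adapted to the local geometry of $\phi$. First, by Proposition \ref{prop_prelim_scaling} the choice $\vartheta = \sigma(\rho-|\alpha|)$ is exactly the one which makes the restriction inequality \eqref{section_intro_restriction_estimate} invariant under the $\alpha$-dilations $\delta_r$. A standard Littlewood--Paley decomposition of $f$ in the dual frequency variables, combined with the embedding $\ell^{p_j}\hookrightarrow\ell^{2}$ for $p_j\in(1,2)$, will then reduce matters to a uniform estimate for $\mu$ restricted to the annulus $\{|x|_\alpha\sim 1\}$. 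Since $p=(p_1,p_3)$ lies on the line $\tfrac{1}{p_1'}+\tfrac{1}{p_3'}=\tfrac12$, this gluing step is essentially the one already used in \cite{IM16,Pa19} and presents no new difficulty.

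Second, I would invoke Proposition \ref{proposition_normal_forms} to cover the compact unit $\alpha$-circle $\{|x|_\alpha=1\}$ by finitely many neighborhoods on each of which, after a local $C^\infty$ change of variables, the shifted phase $\phi_v$ takes one of the normal forms listed there. A smooth partition of unity subordinate to this cover reduces \eqref{section_intro_restriction_estimate} to a finite number of local pieces $\mu_v$, each supported in a small neighborhood of a single point $v$ with $|v|_\alpha=1$. On pieces where $\mathcal{H}_\phi(v)\neq 0$ the surface has two non-vanishing principal curvatures and the damping factor is bounded away from zero and infinity; for these pieces the mixed-norm Strichartz estimate at the relevant $(\tfrac12-\sigma,\sigma)$ follows from a direct adaptation of the $TT^*$ and complex interpolation arguments of Ginibre--Velo \cite{GV92} (cf. \cite{KT98}), using only the non-degeneracy of the Gaussian curvature together with the Knapp-type optimality along the $x_3$ direction.

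Third, the real work is on the pieces $\mu_v$ for which $\mathcal{H}_\phi(v)=0$. By condition (H2) the Hessian determinant is non-flat at $v$, so $|\mathcal{H}_\phi|$ vanishes to some finite order along its zero set, and the damping weight $|\mathcal{H}_\phi|^\sigma$ will, via a sublevel-set decomposition, absorb the loss coming from the vanishing curvatures. Feeding the normal form from Proposition \ref{proposition_normal_forms} into this analysis, one obtains a damped $L^\infty$ decay for the oscillatory integral, and interpolating as in Sj\"olin \cite{Sj74} and Carbery--Kenig--Ziesler \cite{CKZ13} between the trivial $L^1\to L^\infty$ bound and the $L^2\to L^2$ bound yields the desired restriction estimate. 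The natural breaking point of this interpolation -- that is, the largest value of $\sigma$ for which one can work purely with the damping and the non-degenerate part of $\phi_v$ -- turns out to be $\sigma=\tfrac13$, which explains the first part of the statement. To push the range to $\sigma\in[0,\tfrac12)$ under (LA) I would apply the sharp restriction results of \cite{IM16,Pa19} directly to the un-damped surface in linear adapted coordinates: since (LA) provides a linear coordinate change adapted to $\phi_v$, the mixed-norm structure of \eqref{section_intro_restriction_estimate} is preserved, and the extra gain from the damping $|\mathcal{H}_\phi|^\sigma$ at these degenerate points translates into an additional $(\tfrac12-\sigma)$ of admissible range. The final assertion for $\alpha_1=\alpha_2$ then reduces to the classical fact that any $\alpha$-mixed homogeneous $\phi$ with $\alpha_1=\alpha_2$ is an ordinary homogeneous polynomial (after a trivial rescaling), so that linear changes of variables already achieve any Newton-adapted coordinate system.

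The main obstacle I expect is precisely the degenerate analysis in the third paragraph: identifying the correct sublevel-set decomposition of $|\mathcal{H}_\phi|$ in each normal form, and verifying that the mixed-norm ($p_1$ in $(x_1,x_2)$ vs. $p_3$ in $x_3$) version of the Tomas--Stein / Ginibre--Velo $TT^*$ machinery still gives the sharp endpoint when the damping factor degenerates along a curve rather than at an isolated point. The rest of the argument is essentially bookkeeping of the reductions above, but the delicate bit is that the transversal ($x_3$) norm has to interact compatibly with the damping, and this is exactly where condition (LA) allows one to pass to the setting of \cite{Pa19} without destroying the mixed-norm structure.
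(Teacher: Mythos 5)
Your general framework (scaling reduction via Proposition \ref{prop_prelim_scaling}, classification of normal forms via Proposition \ref{proposition_normal_forms}, then local analysis piece by piece) is the same as the paper's. However, there are several concrete gaps and errors in the local analysis. First, the assertion in your last paragraph that $\alpha_1=\alpha_2$ forces $\phi$ to be an ordinary homogeneous polynomial is false; a homogeneous function $x_1^{\rho/\alpha_1}\,g(x_2/x_1)$ for smooth $g$ is generically not a polynomial. What the paper actually uses is that when $\alpha_1=\alpha_2$ only the \emph{adapted} normal forms (i) and (ii) of Proposition \ref{proposition_normal_forms} can occur (essentially because the root curve $x_2=t_0 x_1^\gamma$ is a straight line in that case), so (LA) holds automatically. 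Second, you misattribute the threshold $\sigma\le 1/3$: it is not a ``natural breaking point of interpolation'' for all degenerate pieces, but is forced by a Knapp-type example specific to the \emph{non-adapted} Normal form (vi), i.e.\ $\phi_v(x)=(x_2-x_1^2\psi(x_1))^k r(x)$; in the adapted cases (i)--(v) the paper proves the estimate for the full range $\sigma\in[0,1/2)$ with the damping present. Third, your proposal to ``apply the sharp restriction results of \cite{IM16,Pa19} directly to the un-damped surface'' under (LA) cannot work as stated: the exponents $(1/p_1',1/p_3')=(1/2-\sigma,\sigma)$ change with $\sigma$, and the damping factor $|\mathcal{H}_\phi|^\sigma$ is used in a crucial way after dyadic rescaling (e.g.\ in Subsections \ref{subsection_strichartz_mitigating_i}--\ref{subsection_strichartz_mitigating_v}) to produce uniformly bounded constants; there is no simple passage from the un-damped results to the damped ones.

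Most substantively, your ``sublevel-set decomposition of $|\mathcal{H}_\phi|$ plus $L^1\to L^\infty$/$L^2\to L^2$ interpolation'' does not reach the sharp exponent for Normal forms (iv) and (v). There the Hessian determinant vanishes along the curved set $x_2=x_1^2\psi(x_1)$ and, after dyadically decomposing by distance to this curve and rescaling, one obtains a family of measures $\nu_l$ that does not sum naively in $l$ at the critical exponent. The paper has to embed them into an analytic family $\mu_\zeta=\sum_l 2^{l(\sigma-\zeta)(k-2)}\nu_l$, verify an $L^1\to L^\infty$ bound with $(1+|\xi_3|)^{-1}$ decay for $\Re\zeta=1/2$ by a careful stationary-phase/integration-by-parts dichotomy in which only $\mathcal{O}(1)$ pieces $\nu_l$ contribute at the critical frequencies, and then apply Stein complex interpolation for each fixed $\xi_3$. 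Likewise, for Normal form (vi) the proof does not reduce to a single interpolation but passes through a Littlewood--Paley decomposition in $x_3$ and a further rescaling that couples the damped problem to the structure of the un-damped proof of Theorem \ref{theorem_Strichartz}. None of this appears in your sketch, and the use of a ``local $C^\infty$ change of variables'' to reach the normal forms is also imprecise: the mixed-norm restriction problem is invariant only under linear changes in $(x_1,x_2)$ (plus shears in $x_3$), and Proposition \ref{proposition_normal_forms} is stated exactly for linear changes for this reason.
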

Several comments are in order.
Firstly, precise conditions for when the (LA) condition is satisfied at $v \neq 0$
can be checked by using the normal form tables in Section \ref{section_forms}
(note that in the Proposition \ref{proposition_normal_forms} below, where the normal forms are listed,
only the Normal form (vi) is not in adapted coordinates).
That one is restricted to $0 \leq \sigma \leq 1/3$ in the case when (LA) is not satisfied
is a consequence of a Knapp-type example, as we shall show in Subsubsection \ref{subsubsection_strichartz_mitigating_vi_Knapp}.

Secondly, in the case when $\rho = 1 = |\alpha|$, one can extend the above estimate to the range where
\begin{align*}
\frac{1}{p_1'} + \frac{1}{p_3'} = \frac{1}{2}, \qquad \frac{1}{p_3'} \leq \sigma.
\end{align*}
The reason for this is that $\rho = 1 = |\alpha|$ implies that
the weight $\mathcal{W}$ (and the Hessian determinant) are $\alpha$-mixed homogeneous of degree $0$, and hence bounded on $\R^2$,
and so the estimate for $(p_1,p_3) = (2,1)$ follows trivially by Plancherel.

Finally, let us mention that the most interesting part of the proof of the above theorem
is the proof of Fourier restriction for the Normal form (v) from Proposition \ref{proposition_normal_forms},
which is to be found in Subsection \ref{subsection_strichartz_mitigating_v}.
There we need to estimate the Fourier transform of a certain measure,
and for this we perform a natural decomposition of this measure.
What is remarkable is that at the critical frequencies only $\mathcal{O}(1)$ decomposition pieces contribute
to the size of the Fourier transform.
Interestingly, the same thing already happens in the much easier case of Normal form (iv).

\medskip

In the case of the other weight (which has no roots away from the origin) we have:
\begin{theorem}
\label{theorem_Strichartz}
Let $\phi$ be mixed homogeneous satisfying condition (H1).
Let $\mu$ be the measure defined as in \eqref{section_intro_restriction_measure} with $\mathcal{W}(x) = |x|_{\alpha}^{2\vartheta}$.
If the exponents $(p_1, p_3) \in (1,2)^2$ and $\vartheta \in \R$ satisfy
\begin{align*}
\frac{1}{p_1'} + \frac{h_{\text{lin}}(\phi)}{p_3'} &\leq \frac{1}{2}, &
\frac{1}{p_3'} &\leq \frac{1}{2 h(\phi)}, &
\vartheta &= \frac{|\alpha|}{p_1'}+\frac{\rho}{p_3'} - \frac{|\alpha|}{2},
\end{align*}
then the Fourier restriction estimate \eqref{section_intro_restriction_estimate} holds true.
\end{theorem}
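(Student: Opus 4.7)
The plan is to reduce the global estimate to a uniform family of local estimates via scaling, classify the local singularities using the normal forms of Proposition~\ref{proposition_normal_forms}, and then invoke the known mixed-norm restriction theorems from \cite{IM16} and \cite{Pa19} in each case.

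First I would use the scaling invariance produced by the stated choice of $\vartheta$, namely
\begin{align*}
\vartheta = \frac{|\alpha|}{p_1'} + \frac{\rho}{p_3'} - \frac{|\alpha|}{2},
\end{align*}
together with an $\alpha$-mixed-homogeneous Littlewood--Paley decomposition of the measure $\mu$ at the origin, as set up in Subsection~\ref{subsection_prelim_scaling}. The weight $|x|_\alpha^{2\vartheta}$ is chosen precisely so that the pieces $\mu_k$ associated with dyadic annuli $\{2^{k-1} \leq |x|_\alpha \leq 2^{k+1}\}$ all obey the \emph{same} Fourier restriction estimate after rescaling, so the global estimate follows from a single uniform local estimate on, say, $\{1/2 \leq |x|_\alpha \leq 2\}$, combined with a standard Littlewood--Paley/square function argument in the variable $x_3$ (since $p_3 < 2$).

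Next, on this compact annulus I would use a smooth partition of unity subordinate to a finite cover by small neighborhoods of points $v$ lying on a fundamental domain of the homogeneity flow on $\mathbb{S}^1$; by the invariance of $h_{\text{lin}}(\phi,\cdot)$ and $h(\phi,\cdot)$ along the homogeneity curves it suffices to treat finitely many such $v$. At each $v$ I translate by $(v,\phi(v))$ in the $(x_1,x_2)$ variables and remove the linear part $x\cdot \nabla\phi(v)$ by a modulation in $\widehat{f}$, reducing the problem to a local restriction estimate for the compactly-supported piece of the graph of $\phi_v$, with $\phi_v(0)=0$ and $\nabla\phi_v(0)=0$. Proposition~\ref{proposition_normal_forms} then puts $\phi_v$ (after a smooth coordinate change) into one of the listed normal forms, and hypothesis (H1) guarantees finite type in at least one coordinate direction at every singular $v$, which is precisely the structural assumption needed to apply the results of \cite{IM16}, \cite{Pa19}. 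In each normal form case the corresponding local $L^{p_3}_{x_3}(L^{p_1}_{(x_1,x_2)})$ restriction estimate is either already contained in the main theorems of \cite{Pa19} (adapted or linearly adapted case, where the Newton height of $\phi_v$ at the origin is $\leq h(\phi)$ and the linear height is $\leq h_{\text{lin}}(\phi)$), or follows by reducing to the $L^p$--$L^2$ estimate of \cite{IM16} via the trivial endpoint $(p_1,p_3)=(2,2)$ interpolation whenever the first inequality in the hypothesis is a strict improvement.

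The two conditions
\begin{align*}
\frac{1}{p_1'} + \frac{h_{\text{lin}}(\phi)}{p_3'} \leq \frac{1}{2}, \qquad \frac{1}{p_3'} \leq \frac{1}{2h(\phi)}
\end{align*}
match one-to-one with the hypotheses of those local theorems: the first is the linear-height Knapp condition governing the mixed norm in $x_1$ and $x_3$, and the second is the Newton-height condition that is needed once $h_{\text{lin}}$ and $h$ differ, i.e., in the non-adapted normal form. The main obstacle will be the non-adapted case, namely Normal form (vi) of Proposition~\ref{proposition_normal_forms}: there the adapted coordinates are no longer linear in $(x_1,x_2)$, so the mixed-norm structure of the estimate is not preserved by the change of variable; one must decompose dyadically around the principal root jet of $\phi_v$ and run the argument of \cite{Pa19} with a separate analysis of the "near-root" and "far-from-root" pieces, carefully tracking how $h_{\text{lin}}$ (rather than $h$) enters the Knapp exponent for the former piece. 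All the remaining normal forms are either already in adapted coordinates or reduce to elementary one-variable oscillatory estimates. A final summation in the Littlewood--Paley index, justified by the scale invariance, closes the argument.
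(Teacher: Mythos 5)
Your overall architecture (scale-invariant reduction to a compact annulus via Proposition~\ref{prop_prelim_scaling}, localization near points $v$, classification via Proposition~\ref{proposition_normal_forms}, then application or adaptation of \cite{IM16}, \cite{Pa19}) matches the paper's, and you correctly single out Normal form (vi) as the crux. However, there are two genuine gaps. First, you assert that the adapted cases are either directly in \cite{Pa19} ``or follow by reducing to the $L^p$--$L^2$ estimate of \cite{IM16} via the trivial endpoint $(p_1,p_3)=(2,2)$ interpolation whenever the first inequality in the hypothesis is a strict improvement.'' This does not close the argument: the theorem must be proven at the critical corner $\big(\tfrac{1}{6},\tfrac{1}{2h(\phi)}\big)$, not merely in the interior, and interpolating any estimate with the Plancherel point $\big(\tfrac12,\tfrac12\big)$ only reaches the diagonal $1/p_1'=1/p_3'$; it cannot produce the off-diagonal corner when $h(\phi)\geq 4$. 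You also overlook that the weight $|x|_\alpha^{2\vartheta}$ is not smooth along the coordinate axes, so \cite[Prop.~4.2]{Pa19} cannot be invoked directly when $v$ lies on an axis; the paper has to handle this (Subsection~\ref{subsection_strichartz_mixed_non_smooth}) with a further Littlewood--Paley decomposition away from the axis, case by case across the normal forms.

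Second, your treatment of the non-adapted case (Normal form (vi)) is too vague to constitute a proof. ``Decompose dyadically around the principal root jet and run the argument of \cite{Pa19}, carefully tracking how $h_{\text{lin}}$ rather than $h$ enters the Knapp exponent'' does not capture the actual mechanism. After the principal-root-jet reduction, a dyadic decomposition by $\delta^\kappa$-dilations, a second Littlewood--Paley decomposition in the $x_3$-direction, and a rescaling, one arrives at measure pieces of exactly the form in \eqref{section_strichartz_mixed_measure_rescaled}. The key trick is that these pieces satisfy a \emph{stronger} restriction estimate at $(1/p_1',1/p_3')=(\tfrac{1}{6},\tfrac{1}{3})$ -- the same estimate needed in the mitigating case -- and the target corner $(\tfrac{1}{6},\tfrac{1}{2k})$ is then obtained by interpolation with the curve-restriction estimate at $(\tfrac{1}{6},0)$ (Zygmund). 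Proving $(\tfrac{1}{6},\tfrac{1}{3})$ for the rescaled pieces in turn requires a spectral decomposition in frequency, a split $T_I/T_{II}$ to ensure $\lambda_2\ll 2^l$ (without which an unsummable frequency regime appears), and in the critical frequency range the complex interpolation method of Ikromov--M\"uller. None of this is visible in your description, and ``running the argument of \cite{Pa19}'' naively would stall precisely at the unsummable case.
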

We remark that the quantity $\vartheta$ in the above theorem is allowed to be negative.

As a special case of Theorem \ref{theorem_Strichartz_mitigating} we obtain:
\begin{corollary}
\label{corollary_Strichartz_mitigating}
Let $\phi$ be any mixed homogeneous polynomial in $\R^2$
and let $\mu$ be the measure defined as in \eqref{section_intro_restriction_measure} with $\mathcal{W}(x) = |\mathcal{H}_\phi(x)|^{1/4}$.
Then the Fourier restriction estimate \eqref{section_intro_restriction_estimate} holds true for $p_1' = p_3' = 4$.
\end{corollary}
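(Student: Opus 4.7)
The plan is to deduce the corollary directly from Theorem \ref{theorem_Strichartz_mitigating} by specializing $\sigma = 1/4$. With this choice one has $\sigma \in [0, 1/3]$, and the exponent condition $(1/p_1', 1/p_3') = (1/2 - \sigma, \sigma) = (1/4, 1/4)$ becomes exactly $p_1' = p_3' = 4$. So the only thing to verify is that $\phi$, being a mixed homogeneous polynomial, satisfies hypothesis (H2). No appeal to condition (LA) is needed, since we are in the ``safe'' range $\sigma \le 1/3$.

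The verification of (H2) is essentially automatic for polynomials, and splits into two cases. First, if the Hessian determinant $\mathcal{H}_\phi$ vanishes identically on $\R^2$, then the weight $\mathcal{W} = |\mathcal{H}_\phi|^{1/4}$ vanishes identically, so the measure $\mu$ defined in \eqref{section_intro_restriction_measure} is the zero measure, and the Fourier restriction estimate \eqref{section_intro_restriction_estimate} holds trivially for any $(p_1,p_3)$. Second, if $\mathcal{H}_\phi$ is not identically zero, then it is a nonzero polynomial, and such a polynomial cannot be flat at any point of $\R^2$ (flatness at a single point would force all of its Taylor coefficients there to vanish, hence the polynomial itself to vanish); in particular it is not flat at any $x \neq 0$, which is condition (H2).

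With (H2) established, one simply invokes Theorem \ref{theorem_Strichartz_mitigating} with $\sigma = 1/4$ to conclude. There is no real obstacle here: the corollary is essentially a packaging of the $\sigma = 1/4$ instance of the theorem, tailored so that the mixed-norm exponents collapse to the symmetric endpoint $p_1' = p_3' = 4$, and the polynomial hypothesis is just the cheapest way to guarantee non-flatness of $\mathcal{H}_\phi$ away from the origin.
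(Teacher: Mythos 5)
Your proposal is correct and follows essentially the same route as the paper: specialize $\sigma = 1/4 \in [0,1/3]$ in Theorem \ref{theorem_Strichartz_mitigating}, dispose of the case $\mathcal{H}_\phi \equiv 0$ trivially (since then $\mu = 0$), and otherwise observe that a nonzero polynomial is nowhere flat, which gives (H2). The paper additionally remarks that $\mathcal{H}_\phi$ is a nonzero \emph{mixed homogeneous} polynomial, but that extra structure is not needed for the conclusion and your simpler observation suffices.
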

In the case of the above corollary we note that the Hessian determinant can either vanish identically,
or it does not vanish of infinite order anywhere,
since it is necessarily a nonzero mixed homogeneous polynomial.
But the case when the Hessian determinant vanishes identically is trivial,
so we are indeed within the scope of Theorem \ref{theorem_Strichartz_mitigating}.

When one considers ``isotropically'' homogeneous polynomials (i.e., when $\alpha_1 = \alpha_2$)
Corollary \ref{corollary_Strichartz_mitigating} recovers the main result of \cite{CKZ13}.
The strategy of proof in \cite{CKZ13} was to first perform certain decompositions of the surface measure in order to get
appropriate control over the size of $\nabla \phi$ and the Hessian determinant $\mathcal{H}_\phi$,
after which one applies an $L^4$ argument,
as the $L^{4/3}(\R^3) \to L^2(\mathrm{d}\mu)$ Fourier restriction estimate is
equivalent to the $L^2(\mathrm{d}\mu) \to L^4(\R^3)$ extension estimate.

Our proofs of Theorem \ref{theorem_Strichartz_mitigating} and Theorem \ref{theorem_Strichartz}
are based on the following intermediary result:
\begin{proposition}
\label{proposition_normal_forms}
Let $v \in \R^2 \setminus \{0\}$, let $\phi$ be as above $\alpha$-mixed homogeneous of degree $\rho$, and
let us assume that it satisfies condition (H1) and
that it is degenerate at $v$ (i.e., the Hessian determinant vanishes at $v$).
Then after a linear transformation of coordinates the function
$\phi_v(x) \coloneqq \phi(x+v) - \phi(v) - x \cdot \nabla \phi(v)$
and its Hessian determinant $\mathcal{H}_{\phi_v}$
assume precisely one of the following local normal forms at the origin:
\begin{itemize}
\item[(i)]
$\phi_v(x) = x_2^k r(x) + \varphi(x)$, $k \geq 2$, $\varphi$ is flat, \\
$\mathcal{H}_{\phi_v}(x) = x_2^{\tilde{k}+2k-2} q(x)$, $\tilde{k} \geq 0$, or $\mathcal{H}_{\phi_v}$ is flat, \\
and in case when $\mathcal{H}_{\phi_v}$ is not flat, then $\varphi$ vanishes identically,
\item[(ii)]
$\phi_v(x) = x_1^2 r_1(x_1) + x_2^k r_2(x)$, $k \geq 3$, \\
$\mathcal{H}_{\phi_v}(x) = x_2^{k-2} q(x)$,
\item[(iii)]
$\phi_v(x) = x_1^2 r_1(x) + x_2^k r_2(x)$, $k \geq 3$, \\
$\partial_2^j r_1(0) = c(\phi,v) \, j \, \partial_2^{j-1} r_1(0), \, j = 1, \ldots, k-1$, for some constant $c(\phi,v) \neq 0$, \\
$\mathcal{H}_{\phi_v}(x) = x_2^{k-2} q(x)$,
\item[(iv)]
$\phi_v(x) = x_1^2 r_1(x_1) + (x_2-x_1^2 \psi(x_1))^k r_2(x)$, $k \geq 3$, \\
$\mathcal{H}_{\phi_v}(x) = (x_2-x_1^2 \psi(x_1))^{k-2} q(x)$,
\item[(v)]
$\phi_v(x) = x_1^2 r_1(x) + (x_2 - x_1^2 \psi(x_1))^k r_2(x)$, $k \geq 3$, \\
$\partial_2^j r_1(0) = c(\phi,v) \, j \, \partial_2^{j-1} r_1(0), \, j = 1, \ldots, k-1$, for some constant $c(\phi,v) \neq 0$, \\
$\mathcal{H}_{\phi_v}(x) = (x_2 - x_1^2 \psi(x_1))^{k-2} q(x)$,
\item[(vi)]
$\phi_v(x) = (x_2-x_1^2 \psi(x_1))^k r(x)$, $k \geq 2$, \\
$\mathcal{H}_{\phi_v}(x) = (x_2-x_1^2 \psi(x_1))^{2k-3} q(x)$.
\end{itemize}
In all the above cases the appearing functions are smooth
and do not vanish at the origin, i.e., $r(0), r_1(0), r_2(0), q(0), \psi(0) \neq 0$
(except for the function $\varphi$ which is flat),
and the root of the function $x \mapsto x_2 - x_1^2 \psi(x_1)$ corresponds to the homogeneity curve through $v$.
If condition (H2) is satisfied,
then the function $\varphi$ in case (i) always vanishes identically and the Hessian determinant is never flat.
Finally, if $\alpha_1 = \alpha_2$, then only Normal forms (i) and (ii) can appear.
\end{proposition}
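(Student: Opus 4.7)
My plan would be to first normalize coordinates using the geometry of the homogeneity curve through $v$, then translate the problem into statements about the Taylor expansion of $\phi_v$, and finally do a case analysis on the rank of $\nabla^2\phi(v)$ combined with the way $\mathcal{H}_\phi$ vanishes along the curve.

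First, I would set up the linear coordinate change. The homogeneity curve through $v$ is the smooth image $r\mapsto (r^{\alpha_1}v_1, r^{\alpha_2}v_2)$. If $v_1v_2=0$, the curve coincides with a coordinate axis; permuting the axes by a linear map, I may assume it is the $x_1$-axis, so after translating to $v$ the curve becomes $\{x_2=0\}$, leading to the shape in (i) or in the subfamily (ii)/(iii) where the vanishing factor is just $x_2$. If $v_1v_2\neq 0$, the tangent to the curve at $v$ is $(\alpha_1v_1,\alpha_2v_2)$, and a linear change rotating this tangent onto $e_1$ turns the translated curve into a smooth graph $x_2=x_1^2\psi(x_1)$; note $\psi(0)\neq 0$ exactly when $\alpha_1\neq\alpha_2$, which explains why only (i)/(ii) can occur in the isotropic case $\alpha_1=\alpha_2$.

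Next I would exploit the mixed homogeneity of $\mathcal{H}_\phi$, which has degree $2(\rho-|\alpha|)$. Its zero set is $\delta_r$-invariant, and because it vanishes at $v$ it vanishes along the entire homogeneity curve. Using condition (H1) and the Malgrange preparation theorem in the smooth variable transverse to the curve, I would write $\mathcal{H}_{\phi_v}(x)=(x_2-x_1^2\psi(x_1))^{m}q(x)$ with $q(0)\neq 0$, for a unique $m\geq 1$ (with the axial version in the (i)/(ii)/(iii) cases, and a flat branch allowed in (i) if (H2) is not imposed). The exponent $m$ will eventually match the values $2k-2,k-2,2k-3$ stated in the proposition.

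Now the dichotomy on $\mathrm{rank}\,\nabla^2\phi_v(0)$:
\textbf{Rank $0$.} Both $\partial_1^2\phi_v(0)$ and $\partial_2^2\phi_v(0)$ vanish; using (H1), one of these functions is of finite type along its coordinate line. Iterated Malgrange preparation on the defining function of the homogeneity curve lets me factor $\phi_v$ as a power of that defining function times a smooth nonvanishing remainder, producing forms (i) (with $\mathcal{H}_{\phi_v}$ possibly flat / $\varphi$ absorbing a flat term) or (vi). The exponent relation $2k-3$ vs.\ $2k-2$ is then a direct Hessian-determinant computation from the factored form of $\phi_v$.
\textbf{Rank $1$.} After a further linear rotation (inside the eigendirection freedom) I may assume $\partial_1^2\phi_v(0)\neq 0$ while $\partial_1\partial_2\phi_v(0)=\partial_2^2\phi_v(0)=0$. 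Applying the splitting/Weierstrass preparation with respect to $x_1$ yields $\phi_v(x)=x_1^2\, r_1(x)+\text{remainder}$, where the remainder is a function that, after a further analysis using the factorization of $\mathcal{H}_{\phi_v}$ established above, must equal $(x_2-x_1^2\psi(x_1))^k r_2(x)$ (or $x_2^k r_2(x)$ in the axial case) with $k\geq 3$. At this point one has (ii)/(iii) in the axial case, (iv)/(v) in the generic case, with the dichotomy determined by whether $r_1$ depends only on $x_1$ or genuinely on $x$.

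The most delicate step, and the one I expect to be the main obstacle, is pinning down the derivative relation $\partial_2^jr_1(0)=c(\phi,v)\,j\,\partial_2^{j-1}r_1(0)$ for $j=1,\dots,k-1$ appearing in cases (iii) and (v). This is the condition that obstructs passing from (iii)/(v) to (ii)/(iv) by a further linear change, and it is imposed by the Euler identity $\alpha_1 x_1\partial_1\phi+\alpha_2 x_2\partial_2\phi=\rho\phi$ applied to $\phi=\phi_v+\phi(v)+x\cdot\nabla\phi(v)$. Expanding Euler's identity around $v$ and matching the Taylor coefficients of $x_1^2x_2^{j-1}$ on both sides of the resulting relation forces $\partial_2^jr_1(0)$ to be proportional to $j\,\partial_2^{j-1}r_1(0)$, with the proportionality constant computable from $\nabla\phi(v)$ and $(\alpha,\rho)$; summed, this matches the Taylor series of $r_1(0)/(1-cx_2)$ modulo $x_2^k$, and explains why this is exactly the obstruction to further linear normalization. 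The final statement about (H2) ruling out the flat branch in (i), and the collapse to forms (i)/(ii) when $\alpha_1=\alpha_2$ (since then $\psi\equiv 0$), both drop out immediately from the above setup.
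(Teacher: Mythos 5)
Your outline shares the right raw ingredients with the paper (the homogeneity curve, the Euler identity, preparation theory), but the core step in your rank-1 case is a genuine gap. After splitting $\phi_v(x)=x_1^2 r_1(x)+\text{remainder}$ you assert that the remainder must equal $(x_2-x_1^2\psi(x_1))^k r_2(x)$ by a ``further analysis using the factorization of $\mathcal{H}_{\phi_v}$.'' Recovering the normal form of $\phi_v$ from that of its Hessian determinant is a nonlinear inverse problem; Malgrange preparation factorizes $\mathcal{H}_{\phi_v}$, not $\phi_v$, and no argument is offered to pass from one to the other. The paper sidesteps this completely by exploiting the explicit identity $\phi(x_1,x_2)=x_1^{\rho/\alpha_1}\phi(1,x_2x_1^{-\gamma})$ valid for $x_1>0$; a one-variable Taylor expansion of $\phi(1,\cdot)$ at $t_0 = v_2 v_1^{-\gamma}$ then produces the factor $(x_2-t_0x_1^{\gamma})^k$ directly (see \eqref{eq_phi_basic_form}--\eqref{eq_phi_basic_form_y}), which is what actually drives the whole classification. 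There is also a coordinate-change mismatch: the splitting/Weierstrass route requires a nonlinear diffeomorphism, whereas Proposition \ref{proposition_normal_forms} classifies $\phi_v$ up to \emph{linear} changes only. The relations $\partial_2^jr_1(0)=c\,j\,\partial_2^{j-1}r_1(0)$ in (iii) and (v) are precisely the obstruction to eliminating the $x_2$-dependence of $r_1$ by a further linear change; applying the splitting lemma would quotient them out and land on a form that is not on the list. The paper's $y$, $z$, $w$ systems in Subsection \ref{subsection_forms_derivations_w} are all affine for exactly this reason.

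Two subsidiary problems. First, the rank dichotomy does not partition the cases: (i) and (vi) occur with rank zero when $k\geq 3$ but with rank one when $k=2$, while (ii)--(v) are all rank one, so ``rank $1 \Rightarrow$ (ii)--(v)'' fails (and the flat subcases of (i) can also be rank one). The paper instead organizes the analysis around $k=\inf\{j\geq 2:b_j\neq 0\}$ together with the vanishing of $b_0,b_1$ and the arithmetic of $\rho,\alpha_1,\alpha_2$ (the tables in Subsections \ref{subsection_forms_table_0}--\ref{subsection_forms_table_1}). Second, your explanation of why only (i)/(ii) survive when $\alpha_1=\alpha_2$ is incomplete: $\psi\equiv 0$ kills (iv), (v), (vi), but (iii) contains no $\psi$. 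To exclude (iii) when $\gamma=1$ the paper needs the Euler equation in $w$-coordinates (the ODE argument giving \eqref{diff_claim}), which forces the corresponding coefficient function to depend on $w_2$ alone and so upgrades (iii) to (ii); this does not ``drop out immediately'' from the geometry of the curve.
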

In cases (i) and (ii) one has further subcases (see Subsection \ref{subsection_forms_considerations})
of technical nature, so we left them out in the above proposition.
We also note that only in case (vi) the function $\phi_v$ is not in adapted coordinates
(and the adapted coordinates can be achieved only through
the nonlinear transformation $(x_1,x_2) \mapsto (x_1, x_2 + x_1^2 \psi(x_1))$),
but it is linearly adapted.

\medskip

The idea to apply Fourier restriction estimates to obtain a priori estimate for PDEs goes back to Strichartz \cite{St77}.
In our case one can apply the above results to obtain Strichartz estimates
for the nonhomogeneous initial problem
\begin{align*}
\begin{cases}
(\partial_t - i \phi(D))u(x,t) &= F(x,t), \quad (x,t) \in \R^2 \times (0,\infty),\\
\qquad \qquad \,\,\quad u(x,0) &= G(x), \quad \quad\,x \in \R^2,
\end{cases}
\end{align*}
where $F \in \mathcal{S}(\R^3)$, $G \in \mathcal{S}(\R^2)$.
Namely, by an application of the Christ-Kiselev lemma \cite{CK01} one gets the following result:
\begin{corollary}
\label{corollary_Strichartz_equation}
Let $\phi$, $\mathcal{W}$, and $(p_1,p_3) \in (1,2)^2$ be either as in Theorem \ref{theorem_Strichartz_mitigating} or Theorem \ref{theorem_Strichartz},
and let us furthermore assume that $\rho \in \{0,1\}$.
Then for the above nonhomogeneous PDE one has the a priori estimate
\begin{align*}
\Vert u \Vert_{L^{p_3'}_{t}(L^{p_1'}_{(x_1,x_2)})}
  \leq
  C_1 \Vert \mathcal{W}^{-1/2} \FT G \Vert_{L^2(\R^2)} +
  C_2 \Vert \IFT_{(x_1,x_2)} (\mathcal{W}^{-1} \FT_{(x_1,x_2)} F) \Vert_{L^{p_3}_{t}(L^{p_1}_{(x_1,x_2)})},
\end{align*}
where $\FT_{(x_1,x_2)}$ is the partial Fourier transformation in the $x = (x_1, x_2)$ direction.
\end{corollary}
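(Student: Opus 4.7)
The plan is to combine Duhamel's formula with the Fourier restriction estimates of Theorem \ref{theorem_Strichartz_mitigating} or Theorem \ref{theorem_Strichartz}, transported to extension estimates by duality, and then invoke the Christ--Kiselev lemma to pass from an unrestricted time integration to the retarded one required by the initial value problem. The assumption $\rho \in \{0,1\}$ ensures that $\phi$ defines a reasonable Fourier multiplier away from the origin, so that the formal solution
\begin{align*}
u(x,t) = e^{it\phi(D)}G(x) + \int_0^t e^{i(t-s)\phi(D)}F(\cdot,s)(x)\,\mathrm{d}s
\end{align*}
makes sense for Schwartz data, and it suffices to bound the two summands separately in $L^{p_3'}_t(L^{p_1'}_{(x_1,x_2)})$.

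For the homogeneous term I would rewrite $e^{it\phi(D)}G(x) = \int e^{i(x\cdot\xi + t\phi(\xi))}\hat{G}(\xi)\,\mathrm{d}\xi$ and group $\hat{G}(\xi) = \mathcal{W}(\xi) \cdot [\mathcal{W}(\xi)^{-1}\hat{G}(\xi)]$, so that the expression becomes precisely the extension (adjoint restriction) operator $R^*$ associated to \eqref{section_intro_restriction_estimate} applied to the surface function $g(\xi,\phi(\xi)) \coloneqq \mathcal{W}(\xi)^{-1}\hat{G}(\xi)$. Duality with the restriction estimate then yields $\Vert e^{it\phi(D)}G \Vert_{L^{p_3'}_t(L^{p_1'}_{(x_1,x_2)})} \leq C\Vert g\Vert_{L^2(\mu)}$, and a direct computation gives $\Vert g\Vert_{L^2(\mu)}^2 = \int \mathcal{W}(\xi)^{-1}|\hat{G}(\xi)|^2\,\mathrm{d}\xi = \Vert \mathcal{W}^{-1/2}\hat{G}\Vert_{L^2(\R^2)}^2$, matching the first term of the claimed bound.

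For the inhomogeneous term I would use a $TT^*$-type argument. A short space-time Fourier calculation shows that the \emph{untruncated} Duhamel integral $\int_{\R} e^{i(t-s)\phi(D)}F(\cdot,s)(x)\,\mathrm{d}s$ equals (up to a Fourier normalization constant) $R^*R(PF)(x,t)$, where $R$ is the restriction operator of \eqref{section_intro_restriction_estimate} and $P$ is the spatial Fourier multiplier $PF := \IFT_{(x_1,x_2)}(\mathcal{W}^{-1}\FT_{(x_1,x_2)}F)$ appearing on the right hand side of the corollary. Composing the restriction estimate on $R$ with its dual extension estimate on $R^*$ therefore produces
\begin{align*}
\Big\Vert \int_{\R} e^{i(t-s)\phi(D)}F(\cdot,s)\,\mathrm{d}s \Big\Vert_{L^{p_3'}_t(L^{p_1'}_{(x_1,x_2)})} \leq C\Vert PF\Vert_{L^{p_3}_t(L^{p_1}_{(x_1,x_2)})},
\end{align*}
which is exactly the second term of the stated bound, except that the time integration runs over all of $\R$ instead of $[0,t]$.

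The only remaining step, which I expect to be the main (though standard) obstacle, is to replace this untruncated $s$-convolution by its causal truncation $\int_0^t$. Here I would appeal to the Christ--Kiselev lemma \cite{CK01}: the hypothesis $(p_1,p_3) \in (1,2)^2$ forces the strict inequality $p_3 < 2 < p_3'$ for the outer (time) exponents, which is precisely what the lemma requires in order to upgrade a bound for an integral operator in $s$ to the same bound, with a constant depending only on $p_3$ and $p_3'$, for its retarded truncation. Applying the lemma to the $s$-convolution operator displayed above completes the proof of the corollary.
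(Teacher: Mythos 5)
Your overall strategy — Duhamel, duality with the restriction estimate, and Christ--Kiselev to pass from the untruncated $s$-convolution to the retarded one — is the same as the paper's, and the treatment of the homogeneous term and the identification $\int_{\R} e^{i(t-s)\phi(D)}F(\cdot,s)\,\mathrm{d}s = R^*R(PF)$ are both correct. However, there is a genuine gap in the final step: the Christ--Kiselev lemma, in the form cited by the paper (from Sogge's book), requires the operator-valued function $K : \R \to \mathcal{L}(L^{p_1}(\R^2), L^{p_1'}(\R^2))$, $K(\tau)=$ convolution in $x$ against $\IFT\mu(\cdot,\tau)$, to be \emph{pointwise bounded for each fixed $\tau$} and \emph{strongly continuous}. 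You apply the lemma directly to the full operator without verifying this hypothesis, and it is in fact problematic: for fixed $\tau$ the spatial kernel $\IFT\mu(\cdot,\tau) = \int e^{i(x\cdot\xi+\tau\phi(\xi))}\mathcal{W}(\xi)\,\mathrm{d}\xi$ is only a tempered distribution (the symbol $e^{i\tau\phi(\xi)}\mathcal{W}(\xi)$ neither decays nor stays bounded as $|\xi|\to\infty$, since $\mathcal{W}$ is mixed homogeneous), so there is no reason for the convolution against it to define a bounded operator $L^{p_1}(\R^2)\to L^{p_1'}(\R^2)$ at a single time slice.

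The paper closes this gap by first performing a Littlewood--Paley decomposition in the spatial frequency using the $\alpha$-mixed homogeneous annuli $\eta_j(D)$, so that the relevant kernel is replaced by $K_j(\tau;\cdot) = \int e^{i(x\cdot\xi+\tau\phi(\xi))}\eta_j(\xi)\mathcal{W}(\xi)\,\mathrm{d}\xi$. Because $\eta_j$ has compact support and $\mathcal{W}$ is locally integrable (Remark \ref{remark_integrability}), $K_j(\tau;\cdot)$ is a bounded function, hence $T_j(\tau)$ is bounded $L^q(\R^2)\to L^{q'}(\R^2)$ for every fixed $\tau$, and dominated convergence gives strong continuity. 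Christ--Kiselev is then applied to each localized piece $T_j$, and the pieces are recombined afterwards via the Littlewood--Paley theorem (which is also how the restriction estimate for the pieces $\mu_j$ was originally shown). Without this localization your appeal to Christ--Kiselev is not justified as written; with it, your proof matches the paper's.
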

In the case when $\mathcal{W}$ is the function $|\cdot|_{\alpha}^{2\vartheta}$
the norms on the right hand side are a type of homogeneous anisotropic Sobolev norms \cite[Chapter 5]{Tr06}
(in particular, note that $\Vert \mathcal{W}^{-1/2} \FT G \Vert_{L^2(\R^2)} = \Vert \IFT \mathcal{W}^{-1/2} \FT G \Vert_{L^2(\R^2)}$).

Since the procedure of how to obtain the corresponding Strichartz estimate from a Fourier restriction estimate is mostly standard
we have deferred the sketch of the proof of Corollary \ref{corollary_Strichartz_equation} to Appendix \ref{appendix_CKlemma}.

\medskip

The article is structured in the following way.
In Section \ref{section_prelim} we first perform some elementary reductions.
Since the proofs of Theorem \ref{theorem_Strichartz_mitigating} and Theorem \ref{theorem_Strichartz}
are essentially based on Proposition \ref{proposition_normal_forms},
we first prove this proposition (and even obtain slightly more precise results) in Section \ref{section_forms}.
Subsequently we prove Theorems \ref{theorem_Strichartz_mitigating} and \ref{theorem_Strichartz}
in the respective Sections \ref{section_strichartz_mitigating} and \ref{section_strichartz_mixed}.
In the appendix we then give a sketch of the proof of Corollary \ref{corollary_Strichartz_equation}.

In this paper we use the symbols $\sim, \lesssim, \gtrsim, \ll, \gg$ with the following meaning.
If two nonnegative quantities $A$ and $B$ are given, then
by $A \ll B$ we mean that there exists a sufficiently small positive constant $c$ such that $A \leq cB$,
and by $A \lesssim B$ we mean that there exists a (possibly large) positive constant $C$ such that $A \leq CB$.
The relation $A \sim B$ means that there exist positive constants $C_1 \leq C_2$ such that $C_1 A \leq B \leq C_2 A$ is satisfied.
Relations $A \gg B$ and $A \gtrsim B$ are defined analogously.
Sometimes the implicit constants $c$, $C$, $C_1$, and $C_2$ depend on certain parameters $p$,
and in order to emphasize this dependence we shall write for example $\lesssim_p$, $\sim_p$, and so on.

We also use the symbols $\chi_0$, $\chi_1$, $r$, and $q$ generically in the following way.
We require $\chi_0$ to be supported in a neighbourhood of the origin and identically equal to $1$ near the origin.
On the other hand, we require $\chi_1$ to be supported away from the origin and identically equal to $1$ on an open neighbourhood of $\pm 1 \in \R$.
Sometimes, when several $\chi_0$ or $\chi_1$ appear within the same formula, they may designate different functions.
The functions $r$ and $q$ (also used with subscripts and tildes) shall be used generically as smooth functions which are nonvanishing at the origin.
Occasionally they can also be flat at the origin, in which case we state this explicitly.


\section{Preliminary reductions}
\label{section_prelim}


\subsection{Rescaling and reduction to local estimates}
\label{subsection_prelim_scaling}

As mentioned, the measure we consider is
\begin{align*}
\langle \mu, f \rangle = \int_{\R^2\setminus\{0\}} f(x_1,x_2,\phi(x_1,x_2)) \, \mathcal{W}(x_1,x_2) \, \mathrm{d}x,
\end{align*}
where $\mathcal{W}$ is nonnegative, continuous on $\R^2\setminus\{0\}$, and $\alpha$-mixed homogeneous of degree $2 \vartheta$.
In this subsection we determine the degree of homogeneity $2 \vartheta$ so that
the global Fourier restriction estimate \eqref{section_intro_restriction_estimate}
becomes equivalent to the local one.
By this we mean the following.
Let us take a partition of unity $(\eta_j)_{j \in \Z}$ in $\R^2\setminus\{0\}$:
\begin{align}
\label{section_prelim_partition}
\sum_{j \in \Z} \eta_j(x) = 1, \qquad x \neq 0,
\end{align}
such that $\eta_j = \eta \circ \delta_{2^{-j}}$
for some $\eta = \eta_0 \in C_c^\infty(\R^2)$ supported away from the origin.
Let us consider the measures
\begin{align}
\label{section_prelim_local_measure}
\langle \mu_j, f \rangle \coloneqq \int_{\R^2} f(x,\phi(x)) \, \eta_j(x) \, \mathcal{W}(x) \, \mathrm{d}x,
\end{align}
which now satisfy $\mu = \sum_{j \in \Z} \mu_j$,
and let us furthermore assume that we have the local estimate for some $j_0 \in \Z$:
\begin{align*}
\Vert \widehat{f} \Vert_{L^2(\mathrm{d}\mu_{j_0})} \leq C \Vert f\Vert_{L^{p}},
\end{align*}
where $L^{p} = L^{p_3}_{x_3}(L^{p_1}_{(x_1,x_2)})$.
We want to determine the degree of homogeneity of $\mathcal{W}$ so that
the Fourier restriction estimate is invariant under the dilations $\delta_r$,
i.e., that we have
\begin{align}
\label{section_prelim_local_estimate_j}
\Vert \widehat{f} \Vert_{L^2(\mathrm{d}\mu_{j})} \leq C \Vert f\Vert_{L^{p}}
\end{align}
for all $j \in \Z$ whenever the estimate is true for some $j_0 \in \Z$.
In this case, and if $(p_1,p_3) \in (1,2]^2$,
a standard Littlewood-Paley argument will then yield
\begin{align*}
\Vert \widehat{f} \Vert_{L^2(\mathrm{d}\mu)} \leq C \Vert f\Vert_{L^{p}}.
\end{align*}
To summarize, we have:
\begin{proposition}
\label{prop_prelim_scaling}
Let $\mathcal{W}$ be $\alpha$-mixed homogeneous of degree $2\vartheta$, not identically zero, and continuous on $\R^2 \setminus \{0\}$,
let $\mu$ be defined as in \eqref{section_intro_restriction_measure}, and let $p_1, p_3 \in (1,2]$.
Then the Fourier restriction estimate \eqref{section_intro_restriction_estimate} for $\mu$
is equivalent to the Fourier restriction estimate \eqref{section_prelim_local_estimate_j} for
the measure $\mu_j$ for any $j \in \Z$ (as defined in \eqref{section_prelim_local_measure})
if and only if
\begin{align}
\label{section_prelim_hom_condition}
\vartheta = \frac{|\alpha|}{p_1'}+\frac{\rho}{p_3'} - \frac{|\alpha|}{2}
\end{align}
is satisfied.
\end{proposition}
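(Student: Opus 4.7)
The plan is to exploit the mixed homogeneity by lifting $\delta_r$ to a dilation of $\R^3$ that preserves the graph of $\phi$, then match scaling exponents on both sides of the restriction inequality to isolate the condition on $\vartheta$, and finally sum local estimates into a global one via an anisotropic Littlewood--Paley decomposition.

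First I would introduce the lifted dilation $\widetilde\delta_r(x_1,x_2,x_3) := (r^{\alpha_1}x_1, r^{\alpha_2} x_2, r^{\rho} x_3)$, which by \eqref{section_intro_alpha_dilations} preserves the graph $\{(x,\phi(x))\}$. The change of variables $x=\delta_{2^j}y$ in \eqref{section_prelim_local_measure}, together with the homogeneities of $\phi$, $\mathcal{W}$, and $\eta_j$ and the Jacobian $2^{j|\alpha|}$, yields the transfer identity $\langle \mu_j, g\rangle = 2^{j(2\vartheta+|\alpha|)}\langle \mu_0, g\circ\widetilde\delta_{2^j}\rangle$. Pairing this with the companion Fourier-side identity $\widehat{D_j f} = 2^{j(|\alpha|+\rho)}(\widehat f\circ\widetilde\delta_{2^j})$ for the anisotropic dilation $(D_j f)(x) := f(\delta_{2^{-j}}(x_1,x_2), 2^{-j\rho} x_3)$, a direct calculation produces the two scalings
\begin{align*}
\|\widehat f\|_{L^2(d\mu_j)} = 2^{j(\vartheta - |\alpha|/2 - \rho)}\|\widehat{D_j f}\|_{L^2(d\mu_0)}, \qquad \|D_j f\|_{L^p} = 2^{j(|\alpha|/p_1 + \rho/p_3)}\|f\|_{L^p}.
\end{align*}
Consequently, via $f \mapsto D_{-j} f$, the local estimate \eqref{section_prelim_local_estimate_j} for $\mu_j$ with constant $C$ is equivalent to the same estimate for $\mu_0$ with constant $C \cdot 2^{-j\beta}$, where
\begin{align*}
\beta := \vartheta + \tfrac{|\alpha|}{2} - \tfrac{|\alpha|}{p_1'} - \tfrac{\rho}{p_3'},
\end{align*}
after using $1/p_i = 1 - 1/p_i'$. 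The condition $\beta = 0$ is precisely \eqref{section_prelim_hom_condition}.

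For the sufficiency direction ($\Leftarrow$), if $\beta=0$ then \eqref{section_prelim_local_estimate_j} at a single scale $j_0$ transfers with the same constant $C$ to every $j\in\Z$. To pass to the global estimate \eqref{section_intro_restriction_estimate}, I would apply the anisotropic Littlewood--Paley decomposition $f = \sum_j P_j f$ dual to $\widetilde\delta_{2^j}$: the Fourier support of $P_j f$ lies in a $\widetilde\delta$-dyadic shell of scale $2^j$, matching the shell where $\mu_j$ is concentrated. Almost-orthogonality of the pieces on $\mu$, the uniform local estimates, and the mixed-norm anisotropic Littlewood--Paley square-function inequality (this is where the hypothesis $p_1, p_3 \in (1,2]$ enters) combine into
\begin{align*}
\|\widehat f\|_{L^2(d\mu)}^2 \lesssim \sum_j \|\widehat{P_j f}\|_{L^2(d\mu_j)}^2 \lesssim C^2 \sum_j \|P_j f\|_{L^p}^2 \lesssim C^2 \|f\|_{L^p}^2.
\end{align*}

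For the necessity direction ($\Rightarrow$), the scaling identities force the best constants $C^*_j$ of \eqref{section_prelim_local_estimate_j} to satisfy $C^*_j = C^*_0 \cdot 2^{j\beta}$. If $\beta \neq 0$ this sequence is unbounded in $j$; but if the local estimate at one scale is equivalent to the global \eqref{section_intro_restriction_estimate}, the trivial implication (G)$\Rightarrow$(L$_j$) (which is automatic since $\mu_j \leq \mu$) forces the uniform bound $C^*_j \leq C_{(G)}$, contradicting the geometric growth unless $C^*_0 = 0$. The latter is excluded by the assumption that $\mathcal{W}$ is nontrivial and continuous, so we conclude $\beta = 0$, i.e.\ \eqref{section_prelim_hom_condition}. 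The main technical step needing care is the mixed-norm anisotropic Littlewood--Paley square-function bound adapted to $\widetilde\delta_r$; everything else reduces to direct scaling computations.
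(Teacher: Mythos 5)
Your scaling identities and the necessity direction reproduce the paper's argument faithfully: the measure transfer identity, the Fourier dilation formula, and the conclusion that the constants $C^*_j$ blow up geometrically unless $\beta=0$ are exactly what the paper does (including the need to rule out $C^*_0 = 0$ via the nontriviality of $\mathcal{W}$).

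The sufficiency direction, however, has a concrete imprecision that would need fixing. You introduce projections $P_j$ whose Fourier support lies ``in a $\widetilde\delta$-dyadic shell of scale $2^j$ [in $\R^3$], matching the shell where $\mu_j$ is concentrated.'' This premise is not correct: $\mu_j$ is supported on $\{(x,\phi(x)) : \eta_j(x)\neq 0\}$, and while the $(\xi_1,\xi_2)$ coordinates of that set lie in a compact $\delta$-annulus in $\R^2$, the $\xi_3 = \phi(x)$ coordinate is \emph{not} confined to $|\xi_3|\sim 2^{j\rho}$; it ranges over a full interval including $0$ wherever $\phi$ has zeros. So a 3D anisotropic shell does not contain $\supp\mu_j$, and hence $\widehat{P_j f}$ will not equal $\widehat f$ on $\supp\mu_j$, breaking the almost-orthogonality inequality $\|\widehat f\|^2_{L^2(d\mu)}\lesssim\sum_j\|\widehat{P_jf}\|^2_{L^2(d\mu_j)}$. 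The paper avoids this by choosing multipliers $(\tilde\eta_j\otimes 1)(\xi) = \tilde\eta_j(\xi_1,\xi_2)$ that depend only on the first two variables, with $\tilde\eta_j\equiv 1$ on $\supp\eta_j$, which makes the replacement of $\widehat f$ by $\widehat{S_j f}$ exact. Relatedly, once the projections act only in the $(\xi_1,\xi_2)$ variables, you do \emph{not} need a mixed-norm anisotropic Littlewood--Paley square-function theory in $\R^3$: the paper first applies Minkowski's inequality (this is exactly where $p_1,p_3\leq 2$ enters, to move $\ell^2_j$ inside the nested $L^{p_3}_{x_3}(L^{p_1}_{(x_1,x_2)})$ norms) and then invokes only the ordinary 2D Littlewood--Paley square-function bound in the $(x_1,x_2)$ plane. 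Your proposal attributes the role of the hypothesis $p_1,p_3\in(1,2]$ to the LP theorem itself, but the LP theorem needs no such restriction; the restriction is needed for Minkowski. Replacing your $P_j$ with the paper's $S_j$ and making the Minkowski step explicit fixes the argument and brings it in line with the paper's proof.
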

\begin{proof}
Let us first determine what $2 \vartheta$, the degree of homogeneity of $\mathcal{W}$,
needs to be in order that \eqref{section_prelim_local_estimate_j} holds true
for all $j \in \Z$ whenever the it holds true for some $j_0 \in \Z$.
Recall that $| \delta_r x |_\alpha = r |x|_\alpha$.
Inspecting the definition \eqref{section_prelim_local_measure} of $\mu_j$ one gets:
\begin{align*}
\langle \mu_j, f \rangle = 2^{j |\alpha| + 2j \vartheta} \langle \mu_0, \Dil_{(2^{-j\alpha_1},2^{-j\alpha_2},2^{-j\rho})} f \rangle,
\end{align*}
where $(\Dil_{(\lambda_1, \lambda_2, \lambda_3)} f)(x_1, x_2, x_3) = f(\lambda_1^{-1} x_1, \lambda_2^{-1} x_2, \lambda_3^{-1} x_3)$.
The above relation can be interpreted as
\begin{align*}
\mu_j = 2^{2j \vartheta - j \rho} \Dil_{(2^{j\alpha_1}, 2^{j\alpha_2}, 2^{j \rho})} \mu_0.
\end{align*}
Let us assume that we have for some $j \in \Z$ the estimate
\begin{align*}
\langle \mu_j, |\widehat{f}|^2 \rangle =
\Vert \widehat{f} \Vert^2_{L^2(\mathrm{d}\mu_{j})}
\leq C^2 \Vert f\Vert^2_{L^{p}}.
\end{align*}
Since the Fourier transform behaves well with respect to dilations $\Dil_{(\lambda_1, \lambda_2, \lambda_3)}$,
we may rescale the above estimate and get
\begin{align*}
\Vert \widehat{f} \Vert_{L^2(\mathrm{d}\mu_{0})}
   \leq C 2^{-j|\alpha|/2-j\vartheta+j(\alpha_1/p_1'+\alpha_2/p_1'+\rho/p_3')} \Vert f\Vert_{L^{p}}.
\end{align*}
From this one sees that we need precisely \eqref{section_prelim_hom_condition}
in order for the constant in \eqref{section_prelim_local_estimate_j} to be independent of $j$.
If \eqref{section_prelim_hom_condition} does not hold,
then the constant blows up in one of the cases $j \to \infty$ or $j \to -\infty$,
and in particular, the Fourier restriction estimate \eqref{section_intro_restriction_estimate} for $\mu$ cannot hold
(here we use that the restriction operators for $\mu$ and $\mu_j$'s are nonzero since $\mathcal{W}$ is not identically zero).

\medskip
Let us now assume that we indeed have \eqref{section_prelim_hom_condition}.
It is obvious that the Fourier restriction estimate for $\mu$ implies
the Fourier restriction estimate for $\mu_j$ for any $j$.
Let us therefore assume that the estimate \eqref{section_prelim_local_estimate_j} holds true for any $j \in \Z$,
and thus for all $j \in \Z$.

Before proceeding further let us denote by $(\tilde{\eta}_j)_{j \in \Z}$
a family of $C_c^\infty(\R^2 \setminus \{0\})$ functions such that
\begin{align*}
\tilde{\eta}_j = \tilde{\eta}_0 \circ \delta_{2^{-j}} \qquad \text{for all} \, j \in \Z,
\end{align*}
and such that $\tilde{\eta}_j$ is equal to $1$ on the support of $\eta_j$.
One can for example take $\tilde{\eta}_j = \sum_{|k-j| \leq N} \eta_{k}$
for some sufficiently large $N$.
Let us furthermore denote by $S_j$ the Fourier multiplier operator in $\R^3$
with multiplier $(\tilde{\eta}_j \otimes 1) (\xi_1,\xi_2,\xi_3) = \tilde{\eta}_j(\xi_1,\xi_2)$.

Now \eqref{section_prelim_local_estimate_j} implies
\begin{align*}
\Vert \widehat{S_j f} \Vert_{L^2(\mathrm{d}\mu_j)} = \Vert \widehat{f} \Vert_{L^2(\mathrm{d}\mu_j)}
   \leq C \Vert S_j f\Vert_{L^{p}}.
\end{align*}
Therefore
\begin{align*}
\Vert \widehat{f} \Vert^2_{L^2(\mathrm{d}\mu)}
   &= \langle \mu, |\widehat{f}|^2 \rangle
    = \sum_{j\in\Z} \Big\langle \mu_j, |\widehat{f}|^2 \Big\rangle
    = \sum_{j\in\Z} \Big\langle \mu_j, |\widehat{S_j f}|^2 \Big\rangle \\
   &\leq C^2 \sum_{j\in\Z} \Vert S_j f\Vert^2_{L^{p}}
    = C^2 \Big\Vert \Vert S_j f\Vert_{L^{p_3}_{x_3}(L^{p_1}_{(x_1,x_2)})} \Big\Vert^2_{l^2_j},
\end{align*}
where $l^2_j$ denotes the norm of the Hilbert space of $l^2$ sequences on $\Z$.
Since both $p_1 \leq 2$ and $p_3 \leq 2$ we may use Minkowski's inequality
to interchange the $l^2_j$ norm with the $L^p$ norm, and
subsequently apply Littlewood-Paley theory in the $(x_1,x_2)$ variable
(in particular, we do not need to use mixed norm Littlewood-Paley theory)
to get
\begin{align*}
\Big\Vert \Vert S_j f\Vert_{L^p} \Big\Vert^2_{l^2_j}
 \leq \Big\Vert \Vert S_j f\Vert_{l^2_j} \Big\Vert^2_{L^p}
 = \Big\Vert \Big(\sum_{j \in \Z} |S_j f|^2 \Big)^{1/2} \Big\Vert^2_{L^p} \sim \Vert f \Vert_{L^p}.
\end{align*}
This finishes the proof of Proposition \ref{prop_prelim_scaling}.
\end{proof}

\begin{remark}[Scaling in the case of Hessian determinant]
\label{remark_Hessian}
Using the homogeneity condition of $\phi$ one easily obtains that
the Hessian determinant is also $\alpha$-mixed homogeneous of degree $2\rho - 2|\alpha|$.
Thus, when we take $\mathcal{W} = |\mathcal{H}_\phi|^{\sigma}$, $\mathcal{W}$ is homogeneous of degree $2\vartheta = 2\sigma(\rho-|\alpha|)$.
Recall that in this case (i.e., as in the assumptions of Theorem \ref{theorem_Strichartz_mitigating})
we assume that $1/p_1' = 1/2 - \sigma$, $1/p_3' = \sigma$, and so by \eqref{section_prelim_hom_condition}
the equality $2\vartheta = 2\sigma(\rho-|\alpha|)$ is indeed satisfied,
i.e., the right relation between the exponents if one wants scaling invariance.
\end{remark}

\begin{remark}[A general sufficient condition for local integrability of $\mathcal{W}$]
\label{remark_integrability}
Since $\mathcal{W}$ is mixed homogeneous of degree $2\vartheta$,
$\mathcal{W} |x|_\alpha^{-2\vartheta}$ is mixed homogeneous of degree $0$, and in particular a bounded function.
Thus $|\mathcal{W}| \lesssim |x|_\alpha^{2\vartheta}$, and so
it is sufficient to check when $|x|_\alpha^{2\vartheta}$ is locally integrable in $\R^2$.
By symmetry it is sufficient to integrate over $\{(x_1,x_2) : x_1, x_2 > 0\}$.
We have
\begin{align*}
\int_{x_1, x_2 > 0, |x| \lesssim 1} |x|_\alpha^{2\vartheta} \mathrm{d}x
  &= \int_{x_1, x_2 > 0, |x| \lesssim 1} (x_1^{1/\alpha_1} + x_2^{1/\alpha_2})^{2\vartheta} \mathrm{d}x \\
  &\sim \int_{y_1, y_2 > 0, |y| \lesssim 1} (y_1^2 + y_2^2)^{2\vartheta} y_1^{2\alpha_1-1} y_2^{2\alpha_2-1} \mathrm{d}y \\
  &\sim \int_{0 < r \lesssim 1} \int_0^{\pi/2} r^{4\vartheta+2|\alpha|-1} (\cos \theta)^{2\alpha_1-1} (\sin \theta)^{2\alpha_2-1} \mathrm{d}\theta \mathrm{d}r
\end{align*}
Therefore, we must have $4\vartheta+2|\alpha|-1 > -1$, i.e.,
\begin{align*}
2\vartheta + |\alpha| > 0.
\end{align*}
Note that this holds if $\rho \geq 0$, $p_1 > 1$, and $\vartheta$ is given by \eqref{section_prelim_hom_condition}.
\end{remark}

\begin{remark}
When $\phi$ is smooth at the origin and a nonconstant function, then $\rho = 1$, and
the necessary condition obtained by a Knapp-type example
associated to the principle face of $\mathcal{N}(\phi)$ in the initial coordinate system
(see \cite[Proposition 2.1]{Pa19}) tells us that
\begin{align*}
\frac{|\alpha|}{p_1'}+\frac{1}{p_3'} \leq \frac{|\alpha|}{2}
\end{align*}
is necessary for \eqref{section_intro_restriction_estimate}
if $\mathcal{W} \equiv 1$ (i.e., $\vartheta = 0$).
On the other hand,
if we denote $l_\alpha = \{ (t_1, t_3) \in \R^2 : |\alpha| t_1 + t_3 = |\alpha|/2 \}$,
then the expression \eqref{section_prelim_hom_condition} for $\vartheta$ implies that
\begin{align*}
|\vartheta| = \sqrt{1+|\alpha|^2} \, \dist \Big( (1/p_1',1/p_3'), l_\alpha \Big).
\end{align*}
\end{remark}

\subsection{Some further reductions}
\label{subsection_prelim_further}

According to Proposition \ref{prop_prelim_scaling},
under the conditions of Theorem \ref{theorem_Strichartz_mitigating} or Theorem \ref{theorem_Strichartz},
we have to prove the Fourier restriction estimate for a measure defined by the mapping
\begin{align*}
f \mapsto \int_{\R^2} f(x,\phi(x)) \, \eta(x) \, \mathcal{W}(x) \mathrm{d}x,
\end{align*}
where $\eta \in C_c^\infty(\R^2\setminus\{0\})$ is supported in a compact annulus centered at the origin.
Note that in the case of the weight $\mathcal{W} = |\mathcal{H}_\phi|^{\sigma}$ (the case of Theorem \ref{theorem_Strichartz_mitigating})
the degree of homogeneity $2 \vartheta = 2\sigma(\rho-|\alpha|)$
satisfies the relation \eqref{section_prelim_hom_condition} by Remark \ref{remark_Hessian}.

\medskip
{\bf Reductions for the amplitude $\eta$.}
One can easily show that in the context of the Fourier restriction problem we may make the following reductions.
First, by reordering coordinates and/or changing their sign, and by splitting the amplitude $\eta$ into functions with smaller support,
we may restrict ourselves to amplitudes $\eta$ with support contained in the half-plane $\{ (x_1, x_2) \in \R^2 : x_1 \gtrsim 1 \}$.
Then, by compactness, we may localize to small neighbourhoods of points $v \neq 0$ having $v_1 \gtrsim 1$.
Thus, one may assume that the support of $\eta$ is contained in a small neighbourhood
of some generic point $v$ satisfying $v_1 \sim 1$ and $|v| \lesssim 1$.
In fact, compactness and changing signs if necessary implies that we may further assume that either $v_2 = 0$ or $v_2 \sim 1$.

\medskip
{\bf Changing the affine terms of the phase.}
By the previous discussion it suffices to consider the measure
\begin{align}
\label{subsection_prelim_further_local_measure}
f \mapsto \int_{\R^2} f(x,\phi(x)) \, \eta_v(x) \, \mathcal{W}(x) \mathrm{d}x,
\end{align}
where $\eta_v$ is a smooth function supported in a small neighbourhood of a point $v \neq 0$.
We now recall the fact that we can freely add or remove
linear and constant terms in the expression for $\phi$
in the context of the Fourier restriction problem.
For the constant term this is obvious.
For the linear terms this can be achieved by using a linear transformation of the form
$(x_1,x_2,x_3) \mapsto (x_1,x_2,b_1 x_1 + b_2 x_2 + x_3)$ (for more details see \cite[Subsection 3.1]{Pa19}).
In particular, instead of considering the measure \eqref{subsection_prelim_further_local_measure},
we may consider the measure
\begin{align*}
f \mapsto \int_{\R^2} f(x,\phi_v(x-v)) \, \eta_v(x) \, \mathcal{W}(x) \mathrm{d}x,
\end{align*}
where we recall that
\begin{align*}
\phi_v(x) \coloneqq \phi(x+v) - \phi(v) - x \cdot \nabla \phi(v).
\end{align*}

\medskip

The strategy for the proof of Theorem \ref{theorem_Strichartz_mitigating} and Theorem \ref{theorem_Strichartz} should now be clear.
The above discussion reduces the problem to proving a local Fourier restriction estimate in the vincinity of a point $v$,
and so one needs to determine the local normal form of $\phi$ at $v$,
and in the case $\mathcal{W}(x) = |\mathcal{H}_\phi(x)|^{\sigma}$
one needs to additionally determine the order of vanishing of the Hessian determinant at $v$
in the $x_2$ direction (after which the normal form of $\mathcal{W}$ will be clear by homogeneity).



\section{Local normal forms}
\label{section_forms}

In this section we derive the local normal forms for $\phi$ and
for the Hessian determinant $\mathcal{H}_{\phi}$ at a fixed point $v \neq 0$
(as a consequence we prove Proposition \ref{proposition_normal_forms}).
The discussion in Subsection \ref{subsection_prelim_further} implies that
we may assume that $v_1 \sim 1$, and either $v_2 = 0$ or $v_2 \sim 1$.

The structure of this section is as follows.
In Subsection \ref{subsection_forms_considerations} we fix the notation for this section, introduce relevant quantities,
and define the coordinate systems $y$, $z$, and $w$
(the coordinate systems $z$ and $w$ will not be described precisely until Subsection \ref{subsection_forms_derivations_w} though).
In Subsections \ref{subsection_forms_flat}, \ref{subsection_forms_table_0}, and \ref{subsection_forms_table_1}
tables with normal forms of $\phi_v$ are given.
It turns out that in most cases $y$ coordinates suffice and when we use them one obtains the normal forms easily.
We deal with the case when $y$ coordinates do not suffice in Subsection \ref{subsection_forms_derivations_w}.
In Subsection \ref{subsection_forms_derivations_Hessian} we sketch how to calculate
what is the order of vanishing of the Hessian determinant for the respective normal forms.

We assume that the (H1) condition is satisfied throughout this section.
In fact, in Subsection \ref{subsection_forms_flat} we shall explicitly determine the local normal form of $\phi$
when $t \mapsto \partial_2^2 \phi(v_1, t)$ is flat at $v_2$.
In this case it turns out that the Hessian determinant either does not vanish at $v$, or that it is flat at $v$.
In all the other subsections we shall assume that $t \mapsto \partial_2^2 \phi(v_1, t)$ is of finite type at $v_2$.

\subsection{Notation and some general considerations}
\label{subsection_forms_considerations}

Let us begin by introducing the notation.
It will be useful to denote
\begin{align*}
\gamma \coloneqq \frac{\alpha_2}{\alpha_1} > 0,
\end{align*}
and for the point $v = (v_1,v_2)$ (recall $v_1 \sim 1$) we define
\begin{align*}
t_0 &\coloneqq v_2 v_1^{-\gamma}.
\end{align*}
Let us denote the $\partial_2$ derivatives of $\phi$ at $(1,t_0)$ by
\begin{align*}
b_{j} \coloneqq \partial_2^{j} \phi(1,t_0) = g^{(j)}(t_0), \qquad j \in \N_0,
\end{align*}
where
\begin{align*}
g(t) \coloneqq \phi(1,t).
\end{align*}
We furthermore denote
\begin{align}
\label{definition_k}
k \coloneqq \inf \{ j \geq 2 : b_j \neq 0 \},
\end{align}
where we take $k = \infty$ if $b_j = 0$ for all $j \geq 2$.
The equality $k = \infty$ is equivalent to $g^{(2)}$ being flat at $0$.
What precisely happens when $g^{(2)}$ is flat shall be explained in Subsection \ref{subsection_forms_flat},
and in the rest of the section (including this subsection) we assume that $k < \infty$,
unless explicitly stated otherwise.

\medskip

{\bf General form of mixed homogeneous $\phi$.}
Recall that we denote by $\rho \in \{-1,0,1\}$ the degree of homogeneity of $\phi$.
Then we have for any $x$ satisfying $x_1 > 0$:
\begin{align}
\label{eq_homogeneity_phi}
\phi(x_1,x_2) = x_1^{\rho/\alpha_1} \phi(1,x_2 \, x_1^{-\gamma}).
\end{align}
Let us consider the Taylor expansion of $t \mapsto \phi(1,t)$ at $t_0$:
\begin{align*}
g(t) = \phi(1,t) = b_0 + (t-t_0) b_1 + \frac{1}{k!} (t-t_0)^k g_k(t),
\end{align*}
where $g_k$ is a smooth function such that $b_k = g_k(0)$.
Thus, we get
\begin{align}
\label{eq_phi_basic_form}
\phi(x) &=
  x_1^{\rho/\alpha_1}
  \Bigg(b_0 + (x_2 \, x_1^{-\gamma}-t_0) b_1 + \frac{1}{k!} (x_2 \, x_1^{-\gamma}-t_0)^k g_k(x_2 \, x_1^{-\gamma}) \Bigg) \nonumber \\
  &=
  x_1^{\rho/\alpha_1} (b_0-t_0 b_1) + x_2 \, x_1^{(\rho-\alpha_2)/\alpha_1} b_1
  + \frac{1}{k!} x_1^{(\rho-k\alpha_2)/\alpha_1} (x_2 - t_0 x_1^\gamma)^k g_k(x_2 \, x_1^{-\gamma}).
\end{align}
More generally, we have the formal series expansion:
\begin{align}
\label{eq_phi_basic_form_series}
\begin{split}
\phi(x)
   &\approx \sum_{j = 0}^\infty \frac{b_j}{j!} (x_2 - t_0 x_1^\gamma)^j \, x_1^{\rho/\alpha_1 - j\gamma} \\
   &= b_0 x_1^{\rho/\alpha_1} + b_1 (x_2 - t_0 x_1^\gamma) x_1^{\rho/\alpha_1-\gamma}
    + \sum_{j = k}^\infty \frac{b_j}{j!} (x_2 - t_0 x_1^\gamma)^j \, x_1^{\rho/\alpha_1 - j \gamma}.
\end{split}
\end{align}
If $\gamma = 1$ (i.e., $\alpha_1 = \alpha_2$) it will be usually better to write
\begin{align}
\label{eq_phi_basic_form_gamma_1}
\phi(x) =
  x_1^{\rho/\alpha_1} b_0 + (x_2 - t_0 x_1) x_1^{\rho/\alpha_1-1} b_1
  + \frac{1}{k!} (x_2 - t_0 x_1)^k \, x_1^{\rho/\alpha_1-k} \, g_k(x_2 \, x_1^{-1}).
\end{align}
Since $v_1 \sim 1$, we may assume
\begin{align*}
|x_1^{1/\alpha_1} -v_1^{1/\alpha_1}| \ll 1, \qquad |x_2 \, x_1^{-\gamma} - v_2 \, v_1^{-\gamma}| \ll 1.
\end{align*}
The second condition is equivalent to $|x_2 - t_0 x_1^{\gamma}| \ll 1$.
Note that the points on the homogeneity curve through $v$ satisfy the equation $x_2 = t_0 x_1^{\gamma}$.

\medskip

In order to determine the normal forms it will suffice to introduce three additional coordinate systems,
which we shall denote by $y$, $z$, and $w$ respectively, each having the point $v$ as their origin.
The original coordinate system is denoted by $x$.
The function $\phi$ in the coordinate system $y$ (resp. $z$, $w$) shall be denoted by $\phi^y$ (resp. $\phi^z$, $\phi^w$).
For the original coordinate system $x$ we simply use $\phi$, or $\phi^x$ for emphasis.

The function $\phi$ in the coordinate system $y$ (resp. $z$, $w$) but without the affine terms at $v$
shall be denoted by $\phi_v^y$ (resp. $\phi_v^z$, $\phi_v^w$).
This means
\begin{align*}
\phi_v^y(y) \coloneqq \phi^y(y) - \phi^y(0) - y \cdot \nabla \phi^y(0),
\end{align*}
and similarly for $\phi_v^z$ and $\phi_v^w$.

\medskip

{\bf The coordinate system $y$.}
It is defined through the following affine coordinate change having $v = (v_1,v_2)$ as the origin:
\begin{align*}
\begin{split}
y_1 &= x_1 - v_1,\\
y_2 &= x_2 - v_2 - \gamma v_2 v_1^{-1} (x_1-v_1)\\
    &= x_2 - (1-\gamma)v_2 - \gamma v_2 v_1^{-1} x_1.
\end{split}
\end{align*}
The reverse transformation is
\begin{align}
\label{eq_basic_coordinate_change_reverse}
\begin{split}
x_1 &= y_1 + v_1,\\
x_2 &= y_2 + v_2 + \gamma v_2 v_1^{-1} y_1.
\end{split}
\end{align}
One can easily check that in these coordinates we can write
\begin{align*}
x_2 - t_0 x_1^{\gamma}
   &= y_2 + v_2 + \gamma v_2 v_1^{-1} y_1 - v_2 (1 + v_1^{-1} y_1)^{\gamma} \\
   &= y_2 + v_2 + \gamma v_2 v_1^{-1} y_1 - v_2 \Big( 1 + \gamma v_1^{-1} y_1 + \binom{\gamma}{2} v_1^{-2} y_1^2 + \mathcal{O}(y_1^3) \Big) \\
   &= y_2 - y_1^2 \omega(y_1),
\end{align*}
i.e., the points on the homogeneity curve through $v$
satisfy the equation $y_2 = y_1^2 \omega(y_1)$ in $y$ coordinates.
Above (and in the following) we use the notation $\binom{c}{m} = c (c-1) \cdot \ldots \cdot (c-m+1)/m!$
for $c \in \R$ and $m$ nonnegative integer. Furthermore, we obviously have:
\begin{remark}
\label{remark_root_identically_null}
It holds that $\omega(0) \neq 0$ if and only if
$\omega$ is not identically $0$ if and only if
$v_2 \neq 0$ (i.e., $t_0 \neq 0$) and $\gamma \neq 1$.
\end{remark}
The coordinate system $y$ will be used in most of the normal forms below
which shall follow directly from the expression
\begin{align}
\label{eq_phi_basic_form_y}
\phi^y(y) &=
  (v_1+y_1)^{\rho/\alpha_1} (b_0-t_0 b_1) + (v_2+y_2+\gamma v_2 v_1^{-1}y_1) \, (v_1+y_1)^{(\rho-\alpha_2)/\alpha_1} b_1 \\
  &\quad\,\, + (y_2-y_1^2 \omega(y_1))^k r(y) \nonumber
\end{align}
which one obtains from \eqref{eq_phi_basic_form} and \eqref{eq_basic_coordinate_change_reverse}.
When $\gamma = 1$ one uses \eqref{eq_phi_basic_form_gamma_1} instead and gets
\begin{align}
\label{eq_phi_basic_form_y_gamma_1}
\phi^y(y) =
  (v_1+y_1)^{\rho/\alpha_1} b_0 + y_2 (v_1+y_1)^{\rho/\alpha_1-1} b_1 + y_2^k r(y).
\end{align}
In both \eqref{eq_phi_basic_form_y} and \eqref{eq_phi_basic_form_y_gamma_1}
the function $r$ is smooth and nonvanishing at the origin.
Let us also note that the expansion \eqref{eq_phi_basic_form_series} can be rewritten in $y$ coordinates as
\begin{align}
\label{eq_phi_basic_form_series_y}
\begin{split}
\phi^y(y)
   &\approx b_0 (v_1+y_1)^{\rho/\alpha_1} + b_1 (y_2-y_1^2 \omega(y_1)) (v_1+y_1)^{\rho/\alpha_1-\gamma} \\
   &\quad + \sum_{j = k}^\infty \frac{b_j}{j!} (y_2-y_1^2 \omega(y_1))^j \, (v_1+y_1)^{\rho/\alpha_1 - j \gamma}.
\end{split}
\end{align}

The following simple lemma shall be useful later:
\begin{lemma}
\label{lemma_second_y_derivatives}
From equations \eqref{eq_phi_basic_form_y} and \eqref{eq_phi_basic_form_y_gamma_1} we get the following information
on the second order derivatives of $\phi^y$:
\begin{itemize}
\item[(1)]
It always holds:
\begin{align*}
k = 2 \,\, \Longleftrightarrow \,\, b_2 \neq 0 \,\, \Longleftrightarrow \,\, \partial_2^2 \phi^y (0) \neq 0.
\end{align*}
\item[(2.a)]
If $\rho \neq 1$ or $\alpha_2 \neq 1$ (i.e., $\rho - \alpha_2 \neq 0$), then
\begin{align*}
b_1 \neq 0 \,\, \Longleftrightarrow \,\, \partial_1 \partial_2 \phi^y (0) \neq 0.
\end{align*}
\item[(2.b)]
If $\rho = \alpha_2 = 1$ or if $b_1 = 0$, then $\partial_1 \partial_2 \phi^y (0) = 0$.
\item[(3.a)]
If $\rho = 0$ and $\alpha_1 \neq \alpha_2$ (i.e., $\gamma \neq 1$),
or if $\rho = \alpha_1 = 1$ and $\alpha_2 \neq 1$ (and in particular $\gamma \neq 1$), then
\begin{align*}
b_1 \neq 0, t_0 \neq 0 \,\, \Longleftrightarrow \,\, \partial_1^2 \phi^y (0) \neq 0,
\end{align*}
and we remind that $v_2 \neq 0$ if and only if $t_0 \neq 0$.
\item[(3.b)]
If $\rho = \alpha_2 = 1$ and $\alpha_1 \neq 1$ (and in particular $\gamma \neq 1$), then
\begin{align*}
b_0 - t_0 b_1 \neq 0 \Longleftrightarrow \,\, \partial_1^2 \phi^y (0) \neq 0.
\end{align*}
\item[(3.c)]
If $\gamma = 1$ (i.e., $\alpha_1 = \alpha_2$) or if $b_1 = 0$, then
\begin{align*}
b_0 \neq 0, \frac{\rho}{\alpha_1} \notin \{0,1\} \Longleftrightarrow \,\, \partial_1^2 \phi^y (0) \neq 0.
\end{align*}
Note that $\rho/\alpha_1 = 0$ if and only if $\rho = 0$, and $\rho/\alpha_1 = 1$ if and only if $\rho = \alpha_1 = 1$.
\end{itemize}
\end{lemma}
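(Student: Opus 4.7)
The plan is to compute the three second-order partial derivatives of $\phi^y$ at the origin directly from \eqref{eq_phi_basic_form_y} (and from \eqref{eq_phi_basic_form_y_gamma_1} in the special case $\gamma=1$), and then to read off each assertion. Write $\phi^y = T_1 + T_2 + T_3$ where $T_1(y) = (v_1+y_1)^{\rho/\alpha_1}(b_0-t_0 b_1)$, $T_2(y) = (v_2 + y_2 + \gamma v_2 v_1^{-1}y_1)(v_1+y_1)^{(\rho-\alpha_2)/\alpha_1} b_1$, and $T_3(y) = (y_2 - y_1^2\omega(y_1))^k r(y)$. The key observation that organizes everything is that $T_3$ contributes to the second derivatives at the origin only in one way: at $y=0$ the factor $(y_2 - y_1^2 \omega(y_1))$ vanishes, and its $y_1$-derivative $-2y_1\omega(y_1) - y_1^2\omega'(y_1)$ also vanishes there; hence any term in a second derivative of $T_3$ evaluated at the origin that retains a positive power of $(y_2-y_1^2\omega)$ or any factor of $\partial_1(y_2-y_1^2\omega)$ must vanish. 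A short bookkeeping then shows that only $\partial_2^2 T_3(0) = k(k-1)\, r(0) \cdot (y_2-y_1^2\omega)^{k-2}|_{y=0}$ can be nonzero, and this happens precisely when $k=2$, giving part (1) at once (the $\gamma=1$ case via \eqref{eq_phi_basic_form_y_gamma_1} is even simpler since $T_3 = y_2^k r$).

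For part (2), only $T_2$ contributes to $\partial_1\partial_2$: $T_1$ has no $y_2$-dependence, and $T_3$ was just dismissed. Differentiating $T_2$ once in $y_2$ and then in $y_1$ at $y=0$ yields
\begin{align*}
\partial_1\partial_2 \phi^y(0) = \frac{\rho-\alpha_2}{\alpha_1}\, v_1^{(\rho-\alpha_2)/\alpha_1 - 1}\, b_1.
\end{align*}
Since $v_1 \sim 1$, this is nonzero exactly when $\rho-\alpha_2 \neq 0$ and $b_1\neq 0$, yielding (2.a); the special constellation $\rho=\alpha_2=1$ (the only way $\rho-\alpha_2=0$ can occur given $\rho\in\{-1,0,1\}$ and $\alpha_2>0$) and the case $b_1=0$ both force the derivative to vanish, yielding (2.b).

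For part (3), $T_3$ again contributes nothing, so we combine the $\partial_1^2$ contributions of $T_1$ and $T_2$. Writing $\mu \coloneqq \rho/\alpha_1$ and $\beta \coloneqq (\rho-\alpha_2)/\alpha_1$, a direct differentiation gives
\begin{align*}
\partial_1^2 \phi^y(0) = \mu(\mu-1)\, v_1^{\mu-2}(b_0-t_0 b_1) + b_1 v_2 v_1^{\beta-2}\Bigl(2\gamma\, \beta + \beta(\beta-1)\Bigr).
\end{align*}
I would then plug in the hypotheses of (3.a), (3.b), (3.c) separately. In (3.a) with $\rho=0$ one has $\mu=0$, $\beta=-\gamma$, and the bracket collapses to $\gamma(1-\gamma)$, so the derivative is nonzero iff $b_1\neq 0$ and $v_2\neq 0$ (since $\gamma\neq 1$). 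The analogous computation for $\rho=\alpha_1=1$ gives the bracket $\alpha_2(1-\alpha_2)$, with the same conclusion. In (3.b), $\rho=\alpha_2=1$ forces $\beta=0$, so the second summand disappears and we are left with $\mu(\mu-1)v_1^{\mu-2}(b_0-t_0 b_1)$, nonzero iff $b_0-t_0 b_1\neq 0$ (the factor $\mu-1 = 1/\alpha_1 - 1$ is nonzero since $\alpha_1\neq 1$). Finally, (3.c) covers the cases where only the first summand survives: either $b_1=0$ (trivially), or $\gamma=1$, in which \eqref{eq_phi_basic_form_y_gamma_1} shows directly that $T_2$ is linear in $y_2$ and hence contributes nothing to $\partial_1^2$; in both cases the derivative reduces to $\mu(\mu-1)v_1^{\mu-2} b_0$, nonzero iff $b_0\neq 0$ and $\mu\notin\{0,1\}$.

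The only mild obstacle is algebraic bookkeeping in the $\partial_1^2$ calculation and checking that the small-case exceptions $\rho=\alpha_2=1$, $\rho=\alpha_1=1$, $\gamma=1$ are all exhaustively accounted for; once the vanishing of the $T_3$-contribution and the identity $\mu-\beta=\gamma$ are in hand, each item of the lemma drops out of a direct substitution.
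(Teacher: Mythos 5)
Your proof is correct and takes essentially the same approach as the paper's: direct term-by-term differentiation of \eqref{eq_phi_basic_form_y} (resp.\ \eqref{eq_phi_basic_form_y_gamma_1}), after first dismissing the contribution of the $(y_2-y_1^2\omega)^k r$ term to the second derivatives at the origin. The paper's proof only writes out (3.a) and declares the other cases trivial; your unified formula $\partial_1^2\phi^y(0)=\mu(\mu-1)v_1^{\mu-2}(b_0-t_0b_1)+b_1 v_2 v_1^{\beta-2}\bigl(2\gamma\beta+\beta(\beta-1)\bigr)$ reproduces the paper's (3.a) computation upon substituting $\beta=\mu-\gamma=\rho-\gamma$ (valid in that case since $\mu=\rho$), and the remaining cases specialize correctly.
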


\begin{proof}
The only not completely trivial case is (3.a).
Since in this case $\rho/\alpha_1 \in \{0,1\}$,
the first term in \eqref{eq_phi_basic_form_y} is an affine term, and so we can ignore it.
Since $k \geq 2$,
the third term also does not contribute to the $y_1^2$ term in the Taylor series of $\phi^y$,
and and so we can ignore it too.
We therefore only need to consider the term:
\begin{align*}
(v_2+y_2+\gamma v_2 v_1^{-1}y_1) \, (v_1+y_1)^{(\rho-\alpha_2)/\alpha_1} b_1,
\end{align*}
and in fact, we may even reduce ourselves to
\begin{align*}
(v_2+\gamma v_2 v_1^{-1}y_1) \, (v_1+y_1)^{(\rho-\alpha_2)/\alpha_1} b_1
= b_1 v_2 (1+\gamma v_1^{-1}y_1) \, (v_1+y_1)^{(\rho-\alpha_2)/\alpha_1}.
\end{align*}

Now if $t_0 = 0$ (i.e. $v_2 = 0$) or if $b_1 = 0$, then $\partial_1^2 \phi^y (0) = 0$ follows.
Let us now assume $v_2 \neq 0$ and $b_1 \neq 0$.
We note that in our case we may rewrite $(\rho-\alpha_2)/\alpha_1 = \rho-\gamma$,
and so it suffices to show that
\begin{align*}
\partial_{y_1}^2 \mid_{y_1 = 0} \Big((1+\gamma v_1^{-1}y_1) \, (1+v_1^{-1} y_1)^{\rho-\gamma}\Big) \neq 0.
\end{align*}
Calculating the second derivative one gets
\begin{align*}
2 \gamma v_1^{-2} (\rho-\gamma) + v_1^{-2} (\rho-\gamma)(\rho-\gamma-1).
\end{align*}
This is not zero since in this case we have $\rho \in \{0,1\}$ and $\gamma \notin \{0,1\}$.
\end{proof}

\medskip

{\bf The coordinate systems $z$ and $w$.}
These are defined through affine coordinate changes of the form
\begin{align}
\label{eq_basic_coordinate_change_wz}
\begin{split}
x_1 &= v_1 + z_1,         \qquad \qquad \qquad \qquad \quad  w_1 = z_1 + \frac{1}{B}z_2, \\
x_2 &= v_2 + z_2 + A z_1, \qquad \qquad \qquad w_2 = z_2,
\end{split}
\end{align}
having $(v_1,v_2)$ as their origin, where we shall have $B \coloneqq A - \gamma v_2 v_1^{-1} \neq 0$
so that the coordinate system $y$ never coincides with the coordinate system $z$,
and the coordinate system $z$ never coincides with the coordinate system $w$.
The constant $A$ shall depend on $v$ and the first few derivatives of $\phi$ at $v$
(note that $A = B \neq 0$ if $v_2 = t_0 = 0$).
These coordinate systems will be described more precisely in Subsection \ref{subsection_forms_derivations_w}.
There we shall also introduce a smooth function $\tilde{\omega}$ such that
\begin{align*}
x_2 - t_0 x_1^{\gamma} = y_2 - y_1^2 \omega(y_1) = (w_1 - w_2^2 \tilde{\omega}(w_2)) r_0(w)
\end{align*}
for some smooth function $r_0$ satisfying $r_0(0) \neq 0$.
Note that we have
\begin{align}
\label{eq_basic_coordinate_change_yzw}
\begin{split}
y_1 &= z_1 
= w_1 - \frac{1}{B} w_2, \\
y_2 &= z_2 + B z_1 = B w_1.
\end{split}
\end{align}

\medskip

{\bf Some general considerations regarding the Hessian determinant $\mathcal{H}_\phi$.}
Recall that
\begin{align*}
\phi(r^{\alpha_1} x_1, r^{\alpha_2} x_2) = r^\rho \phi(x_1,x_2).
\end{align*}
Taking derivatives in $x_1$ and $x_2$ we get
\begin{align*}
(\partial_1^{\tau_1} \partial_2^{\tau_2} \phi)(r^{\alpha_1} x_1, r^{\alpha_2} x_2) =
   r^{\rho - \tau_1 \alpha_1 - \tau_2 \alpha_2} (\partial_1^{\tau_1} \partial_2^{\tau_2} \phi)(x_1,x_2).
\end{align*}
Thus, we have for the Hessian determinant of $\phi$:
\begin{align*}
\mathcal{H}_\phi (r^{\alpha_1} x_1, r^{\alpha_2} x_2)
   = r^{2(\rho-|\alpha|)} \mathcal{H}_\phi (x_1, x_2).
\end{align*}
From this it follows that if $\mathcal{H}_\phi$ vanishes at the point $v$, then
it also vanishes along the homogeneity curve through $v$ which we recall
is parametrized by $r \mapsto (r^{\alpha_1} v_1, r^{\alpha_2} v_2)$.

We are interested in the order of vanishing of $\mathcal{H}_\phi$ in directions transversal to this curve.
In particular, if we have $\partial_2^{\tau_2} \mathcal{H}_\phi(v) = 0$ for $\tau_2 < N$ and $\partial_2^N \mathcal{H}_\phi(v) \neq 0$,
then by using homogeneity and a Taylor expansion (as we did for $\phi$) we get
\begin{align*}
\mathcal{H}_\phi (x) = (x_2 - t_0 x_1^{\gamma})^N q(x),
\end{align*}
for some smooth function $q$ satisfying $q(v) \neq 0$.
Calculating $N$ shall be done in Subsection \ref{subsection_forms_derivations_Hessian}
by using the normal forms of $\phi$.
Recall that the Hessian determinant is equivariant under affine coordinate changes,
and so we can freely change to $y$, $z$, or $w$ coordinates.

\medskip

{\bf Preliminary comments on the normal forms.}
Let us introduce the following notation for the nondegenerate case
(i.e., the case when the Hessian determinant of $\phi$ does not vanish at $v$):
\begin{itemize}
\item[{\bf (ND)}]
The function $\phi_v$ is nondegenerate at the origin.
\end{itemize}
When $\phi_v$ does not satisfy (ND), then we shall show that
we can associate to it one of the following normal forms:
\begin{itemize}
\item[(i.y1)]
$\phi_v^y(y) = y_2^k r(y)$, $k \geq 2$, \\
$\mathcal{H}_{\phi^y}(y) = y_2^{\tilde{k}+2k-2} q(y)$, $0 \leq \tilde{k} \leq \infty$,
\item[(i.y2)]
$\phi_v^y(y) = y_1^{\tilde{k}} r(y_1) + \varphi(y)$, $\tilde{k} \geq 2$, \\
$\varphi$ and $\mathcal{H}_{\phi^y}$ are flat,
\item[(i.w1)]
$\phi_v^w(w) = w_2^2 r(w_2) + \varphi(w)$, \\
$\varphi$ and $\mathcal{H}_{\phi^w}$ are flat,
\item[(i.w2)]
$\phi_v^w(w) = w_2^2 r(w) + \varphi(w)$, \\
$v_1 B \partial_1^{j} r(0) = j A (\gamma-1) \partial_1^{j-1} r(0)$ for all $j \geq 1$,
where $A,B,v_1 \neq 0$ are defined as above, \\
$\varphi$ and $\mathcal{H}_{\phi^w}$ are flat,
\item[(ii.y)]
$\phi_v^y(y) = y_1^2 r_1(y_1) + y_2^k r_2(y)$, $k \geq 3$, \\
$\mathcal{H}_{\phi^y}(y) = y_2^{k-2} q(y)$,
\item[(ii.w)]
$\phi_v^w(w) = w_1^{\tilde{k}} r_1(w) + w_2^2 r_2(w_2)$, $\tilde{k} \geq 3$, \\
$\mathcal{H}_{\phi^w}(w) = w_1^{\tilde{k}-2} q(w)$,
\item[(iii)]
$\phi_v^w(w) = w_1^{\tilde{k}} r_1(w) + w_2^2 r_2(w)$, $\tilde{k} \geq 3$, \\
$v_1 B \partial_1^{j} r_2(0) = j A (\gamma-1) \partial_1^{j-1} r_2(0)$ for $1 \leq j \leq \tilde{k}-1$,
where $A,B,v_1 \neq 0$ are defined as above, \\
$\mathcal{H}_{\phi^w}(w) = w_1^{\tilde{k}-2} q(w)$,
\item[(iv)]
$\phi_v^y(y) = y_1^2 r_1(y_1) + (y_2-y_1^2 \omega(y_1))^k r_2(y)$, $k \geq 3$, \\
$\mathcal{H}_{\phi^y}(y) = (y_2-y_1^2 \omega(y_1))^{k-2} q(y)$,
\item[(v)]
$\phi_v^w(w) = (w_1 - w_2^2 \tilde{\omega}(w_2))^{\tilde{k}} r_1(w) + w_2^2 r_2(w)$, $\tilde{k} \geq 3$, \\
$v_1 B \partial_1^{j} r_2(0) = j A (\gamma-1) \partial_1^{j-1} r_2(0)$ for $1 \leq j \leq \tilde{k}-1$,
where $A,B,v_1 \neq 0$ are defined as above, \\
$\mathcal{H}_{\phi^w}(w) = (w_1 - w_2^2 \tilde{\omega}(w_2))^{\tilde{k}-2} q(w)$,
\item[(vi)]
$\phi_v^y(y) = (y_2-y_1^2 \omega(y_1))^k r(y)$, $k \geq 2$, \\
$\mathcal{H}_{\phi^y}(y) = (y_2-y_1^2 \omega(y_1))^{2k-3} q(y)$.
\end{itemize}
All the appearing functions are smooth and do not vanish at the origin
(except $\varphi$ which is always flat).
The number $k$ is as defined in \eqref{definition_k} and
it is always finite in the above normal forms (when it is infinite it turns out that one is necessarily in case of Normal form (i.y2)).
On the other hand, the definition of the number $\tilde{k}$ changes from case to case,
and we allow $\tilde{k}$ to be infinite only in Normal form (i.y1), in which case 
we consider the Hessian determinant to be flat at the origin.
Let us furthermore remark that Normal forms (i.w1) and (i.w2) stem from Normal forms (ii.w), (iii), and (v),
in the sense that they correspond to $\tilde{k} = \infty$.

\medskip

The first step in deriving the above normal forms is to switch to $y$ coordinates.
In most cases this will suffice and the normal form will be obvious,
and so in the following subsections we shall leave out most of the details for these cases.
In particular, as a consequence of considerations in 
Subsections \ref{subsection_forms_table_0} and \ref{subsection_forms_table_1},
we shall obtain:
\begin{lemma}
\label{lemma_k_greater_than_3}
If $k \geq 3$ and if we are not in the (ND) case,
then the function $\phi^y_v$ is always in one of the following normal forms:
(i.y1), (i.y2), (ii.y), (iv), or (vi).
\end{lemma}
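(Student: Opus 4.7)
The plan is to leverage the algebraic meaning of $k \geq 3$ in order to exclude all the normal forms whose description goes through the $w$-coordinate system. Once the detailed case analysis carried out in Subsections \ref{subsection_forms_table_0}, \ref{subsection_forms_table_1}, and \ref{subsection_forms_derivations_w} is granted---establishing that (i.y1), (i.y2), (i.w1), (i.w2), (ii.y), (ii.w), (iii), (iv), (v), (vi) exhaust the possibilities for a degenerate $\phi^y_v$---it suffices to show that under $k \geq 3$ the forms (i.w1), (i.w2), (ii.w), (iii), (v) cannot occur.

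First I would observe that by Lemma \ref{lemma_second_y_derivatives}(1) the hypothesis $k \geq 3$ is equivalent to $b_2 = 0$, which is in turn equivalent to $\partial_2^2 \phi^y(0) = 0$; subtracting the affine part of $\phi^y$ preserves second derivatives, so this also gives $\partial_2^2 \phi^y_v(0) = 0$. From the coordinate relations \eqref{eq_basic_coordinate_change_yzw} one immediately reads off $w_1 = y_2 / B$ and $w_2 = y_2 - B y_1$, with $B \neq 0$. In each of (i.w1), (i.w2), (ii.w), (iii), (v), the formula for $\phi^w_v$ contains a summand of the shape $w_2^2 \tilde{r}$ with $\tilde{r}(0) \neq 0$ (take $\tilde{r} = r(w_2)$, $r(w)$, or $r_2(\cdot)$ as appropriate), which under the above substitution contributes $\tilde{r}(0)(y_2 - B y_1)^2$ to the quadratic Taylor polynomial of $\phi^y_v$ and hence $2\tilde{r}(0) \neq 0$ to $\partial_2^2 \phi^y_v(0)$. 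Meanwhile any accompanying summand of the form $w_1^{\tilde{k}} r_1(w)$ or $(w_1 - w_2^2 \tilde{\omega}(w_2))^{\tilde{k}} r_1(w)$ has $\tilde{k} \geq 3$, and since both $w_1$ and $w_1 - w_2^2 \tilde{\omega}(w_2)$ agree with $y_2 / B$ modulo $\mathcal{O}(|y|^2)$, raising to the $\tilde{k}$-th power yields only terms of order $\geq 3$ in $y$ and so cannot affect the pure $y_2^2$ coefficient. The flat remainder $\varphi$ in (i.w1) and (i.w2) likewise contributes nothing to second derivatives. This contradicts the identity $\partial_2^2 \phi^y_v(0) = 0$, ruling out all five $w$-coordinate normal forms.

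The statement then follows because the remaining items in the list from Subsection \ref{subsection_forms_considerations} are precisely (i.y1), (i.y2), (ii.y), (iv), (vi). I expect the main obstacle to be not this short second-derivative comparison but rather the exhaustiveness of the normal form list itself, which depends on an elaborate case distinction (laid out in Subsections \ref{subsection_forms_table_0}--\ref{subsection_forms_derivations_w}) based on whether $b_0$, $b_1$, $t_0$ vanish, and on the precise values of $\gamma$ and $\rho / \alpha_1$; once that classification is in place, Lemma \ref{lemma_k_greater_than_3} is essentially a one-line consequence.
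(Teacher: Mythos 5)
Your second-derivative argument is correct and identifies the real conceptual reason behind the lemma: by Lemma~\ref{lemma_second_y_derivatives}(1), $k \geq 3 \Leftrightarrow \partial_2^2 \phi^y_v(0) = 0$, while in each $w$-coordinate normal form the quadratic jet of $\phi^w_v$ is exactly $r_2(0) w_2^2$ (the $w_1^{\tilde{k}}$ or $(w_1 - w_2^2\tilde\omega(w_2))^{\tilde{k}}$ summand being of order $\tilde{k} \geq 3$ in $y$ since $w_1 = y_2/B$), so that $\partial_2^2\phi^y_v(0) = 2r_2(0) \neq 0$. That rules out (i.w1), (i.w2), (ii.w), (iii), (v). The paper, by contrast, does not argue this way: Lemma~\ref{lemma_k_greater_than_3} is stated as a direct readout of the tables in Subsections~\ref{subsection_forms_table_0} and~\ref{subsection_forms_table_1}, where one simply observes that in every row with $k \geq 3$ (and $k=\infty$, handled in Subsection~\ref{subsection_forms_flat}) only (ND), (i.y1), (i.y2), (ii.y), (iv), or (vi) appear, while the $w$-forms and (FP) occur exclusively in $k=2$ rows.

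This difference matters for the logical ordering, and it is worth being careful about the circularity your framing introduces. You take the full classification---including the $w$-coordinate analysis of Subsection~\ref{subsection_forms_derivations_w}---as granted, but that subsection itself invokes Lemma~\ref{lemma_k_greater_than_3} to dispose of the (FP) case (flip coordinates, observe that the flipped system has its $k \geq 3$, then cite the lemma). So, read literally, your proof derives the lemma from a classification whose proof already uses the lemma. This is not a fatal flaw---your second-derivative check is itself non-circular, and combined with the $k \geq 3$ rows of the tables alone (which never reference Subsection~\ref{subsection_forms_derivations_w}) it gives a clean proof---but it does mean your proposal cannot simply be dropped in as a replacement for the paper's argument. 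In the paper Lemma~\ref{lemma_k_greater_than_3} must be established from the tables first, precisely so that it is available for the (FP) reduction afterward. Your observation is best regarded as the explanation of why the tables come out as they do, rather than as an independent alternative derivation via the completed classification.
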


If $k = 2$, $b_1 \neq 0$, $\rho \neq \alpha_2$, and we are not in the (ND) case,
then we shall either need to
\begin{itemize}
\item[{\bf (FP)}]
Flip coordinates (i.e., exchange $x_1$ and $x_2$) and use the $y$ coordinates associated to the flipped coordinates,
\end{itemize}
or we shall need $w$ (and the intermediary $z$) coordinates.
Details are to be found in Subsection \ref{subsection_forms_derivations_w} below.

Note that flipping coordinates makes sense only when $v_2 \neq 0$
(and indeed, we shall flip coordinates only when $A = 0$, which, as it turns out, never happens when $v_2 = 0$).
After flipping coordinates it will always suffice to use the $y$ coordinates
(associated to the flipped $x$, $v$, and $\alpha$),
and in particular, we shall be able to apply Lemma \ref{lemma_k_greater_than_3}.
Note that these $y$ coordinates are not in general equal to flipped $y$ coordinates associated to the original $x$, $v$, and $\alpha$.


\subsection{Normal form when $t \mapsto \partial_2^2 \phi(1,t)$ is flat at $t_0$ (i.e., $k = \infty$)}
\label{subsection_forms_flat}

Let us assume that
\begin{align}
\label{subsection_forms_flat_assumption}
\partial_2^{j} \phi (1,t_0) = 0 \quad \text{for all} \,\, j \geq 2,
\end{align}
and so we have $\partial_2^{j} \phi (v) = 0$ for all $v$ (with $v_1 > 0$) satisfying $v_2 v_1^{-\gamma} = t_0$ by \eqref{eq_homogeneity_phi}.
The Euler equation for $\phi$ is
\begin{align*}
\rho \phi(x) = \alpha_1 x_1 \partial_1 \phi(x) + \alpha_2 x_2 \partial_2 \phi(x).
\end{align*}
Taking the derivative $\partial^\tau = \partial_1^{\tau_1} \partial_2^{\tau_2}$ we get
at $(v_1,v_2)$ that
\begin{align*}
(\rho - \alpha_1 \tau_1 - \alpha_2 \tau_2) \partial^\tau\phi(v)
    = \alpha_1 v_1 \partial^{\tau+(1,0)} \phi(v) + \alpha_2 v_2 \partial^{\tau+(0,1)} \phi(v).
\end{align*}
From this, the fact that $\alpha_1 v_1 \neq 0$, and the flatness assumption \eqref{subsection_forms_flat_assumption}
it follows by induction in $\tau_1$ that $\partial^\tau \phi(v) = 0$
for all $\tau_1 \geq 0$ and $\tau_2 \geq 2$.

If now $\partial_1 \partial_2 \phi(v) \neq 0$,
then the Hessian determinant does not vanish and we are in the (ND) case
(this always happens for example when $\phi(x_1,x_2) = x_1 x_2$).
On the other hand, if $\partial_1 \partial_2 \phi(v) = 0$,
then we get in the same way as above that $\partial^\tau \phi(v) = 0$
for all $\tau_1 \geq 1$ and $\tau_2 = 1$.
Thus, by using a Taylor expansion at $v$ and by switching to $y$ coordinates (recall $x_1 = y_1 + v_1$)
we may write
\begin{align*}
\phi_v^y(y) = y_1^2 r(y_1) + \varphi(y),
\end{align*}
where $r$ is a smooth function and $\varphi$ is a flat smooth function.
In particular, in this case the Hessian determinant vanishes of infinite order at $x = v$ and therefore the condition (H2) cannot hold.
This also shows that (H2) is a stronger condition than (H1).
Since we assume that at least (H1) holds, then we necessarily have that $t \mapsto \partial_1^2 \phi(t,v_2)$ is not flat at $v_1$,
and so $r$ cannot be flat either, i.e., we can write
\begin{align*}
\phi_v^y(y) = y_1^{\tilde{k}} \tilde{r}(y_1) + \varphi(y),
\end{align*}
for some smooth function $\tilde{r}$ satisfying $\tilde{r}(0) \neq 0$ and $\tilde{k} \geq 2$.
This is precisely the Normal form (i.y2).


\subsection{Normal form tables for $\phi$ mixed homogeneous of degree $\rho = 0$}
\label{subsection_forms_table_0}

Recall that we assume $k < \infty$ in this and the following subsections.
In this case \eqref{eq_phi_basic_form_y} becomes
\begin{align*}
\phi^y(y) - (b_0-t_0 b_1) =
  (v_2+y_2+\gamma v_2 v_1^{-1}y_1) \, (v_1+y_1)^{-\gamma} b_1 + (y_2-y_1^2 \omega(y_1))^k r(y)
\end{align*}
if $\gamma \neq 1$,
and in the case $\gamma = 1$ we have by \eqref{eq_phi_basic_form_y_gamma_1} that
\begin{align}
\label{subsection_forms_table_0_gamma_1_phi}
\phi^y(y) - b_0 =
   y_2 (v_1+y_1)^{-1} b_1 + y_2^k r(y).
\end{align}
We have put the constant terms on the left hand side
since we may freely ignore them.
Note that in the case $\gamma = 1$ we have $\partial_1^2 \phi^y (0) = 0$.

\medskip
\noindent
{\bf Case: $\gamma = 1$.}

\begin{center}
\begin{tabular}{|c|c|c|c|}
\hline
Conditions & Case \\
\hline
$b_1 = 0$ & Normal form (i.y1) \\
\hline
$b_1 \neq 0$ & (ND) \\
\hline
\end{tabular}
\end{center}

Here we actually have in the case when $b_1 = 0$ a precise order of vanishing of the Hessian determinant:
it is always $2k-2$.
This follows from Subsection \ref{subsection_forms_derivations_Hessian} (see in particular \eqref{subsection_forms_derivations_conditions_1}).

If $b_1 \neq 0$, then from \eqref{subsection_forms_table_0_gamma_1_phi}
we obviously have $\partial_1 \partial_2 \phi^y(0) \neq 0$,
and it follows that the Hessian determinant at $0$ is nonzero.

\medskip
\noindent
{\bf Case: $\gamma \neq 1$.}

\begin{center}
\begin{tabular}{|c|c|c|c|}
\hline
Conditions & Case \\
\hline
$t_0 = 0, b_1 = 0$ & Normal form (i.y1) \\
\hline
$t_0 = 0, b_1 \neq 0$ & (ND) \\
\hline
$t_0 \neq 0, b_1 = 0$ & Normal form (vi) \\
\hline
$t_0 \neq 0, b_1 \neq 0, k \geq 3$ & (ND) \\
\hline
$t_0 \neq 0, b_1 \neq 0, k = 2$ & (ND), or (FP), or Normal form (v), or Normal form (i.w2) \\
\hline
\end{tabular}
\end{center}

In the case $t_0 = 0, b_1 \neq 0$ we apply Lemma \ref{lemma_second_y_derivatives}, (2.a) and (3.a),
and get respectively that $\partial_1 \partial_2 \phi^y(0) \neq 0$ and $\partial_1^2 \phi^y(0) = 0$,
from which it indeed follows that we are in the (ND) case.
Similarly, in the case $t_0 \neq 0, b_1 \neq 0, k \geq 3$ we use
Lemma \ref{lemma_second_y_derivatives}, (1) and (2.a),
and obtain that $\partial_2^2 \phi^y(0) = 0$ and $\partial_1 \partial_2 \phi^y(0) \neq 0$,
from which we again get that the Hessian determinant of $\phi^y$ does not vanish.

As the case $t_0 \neq 0, b_1 \neq 0, k = 2$ shall be treated in the same way as certain
other cases which appear later and where $w$ coordinates may be needed,
we have postponed its discussion to Subsection \ref{subsection_forms_derivations_w}.


\subsection{Normal form tables for $\phi$ mixed homogeneous of degree $\rho = \pm 1$}
\label{subsection_forms_table_1}

Recall that here we have
\begin{align}
\label{subsection_forms_table_1_phi}
\phi(x) =
  x_1^{\rho/\alpha_1} (b_0-t_0 b_1) + x_2 \, x_1^{(\rho-\alpha_2)/\alpha_1} b_1
  + \frac{1}{k!} x_1^{(\rho-k\alpha_2)/\alpha_1} (x_2 - t_0 x_1^\gamma)^k g_k(x_2 \, x_1^{-\gamma})
\end{align}
and that in $y$ coordinates this becomes
\begin{align}
\label{subsection_forms_table_1_phi_y}
\begin{split}
\phi^y(y) &=
  (v_1+y_1)^{\rho/\alpha_1} (b_0-t_0 b_1) + (v_2+y_2+\gamma v_2 v_1^{-1}y_1) \, (v_1+y_1)^{(\rho-\alpha_2)/\alpha_1} b_1 \\
  &\quad\,\, + (y_2-y_1^2 \omega(y_1))^k r(y).
\end{split}
\end{align}
In this subsection (where $\rho = \pm 1$) we need to consider five possible subcases.
The cases we first consider are when $\rho = \alpha_1$, or $\rho = \alpha_2$, or both.
Since $\alpha_1$ and $\alpha_2$ are strictly positive, these cases are only possible for $\rho = 1$.
The penultimate case is when $\alpha_1 = \alpha_2 \neq \rho$,
and the last case is when all of $\alpha_1$, $\alpha_2$, and $\rho$ are different from each other.

\medskip
\noindent
{\bf Case: $\rho = 1$, $\alpha_1 = 1$, $\alpha_2 = 1$.}

In this case the first two terms in \eqref{subsection_forms_table_1_phi_y} become affine,
and by Remark \ref{remark_root_identically_null} we have $\omega \equiv 0$.
As a consequence we have only one case:

\begin{center}
\begin{tabular}{|c|c|c|c|}
\hline
Conditions & Case \\
\hline
- & Normal form (i.y1) \\
\hline
\end{tabular}
\end{center}

Furthermore, we note that initially we know that the order of vanishing of the Hessian determinant is at least $2k-2$,
which is always greater than or equal to $2$.
Since this is true at every point, the Hessian determinant vanishes identically in this case.

\medskip
\noindent
{\bf Case: $\rho = 1$, $\alpha_1 \neq 1$, $\alpha_2 = 1$.}

Here we first note that by Lemma \ref{lemma_second_y_derivatives}, (2.b),
we always have $\partial_1 \partial_2 \phi^y(0) = 0$.
This is a simple consequence of the fact that
in this case the second term in \eqref{subsection_forms_table_1_phi_y} is linear.

\begin{center}
\begin{tabular}{|c|c|c|c|}
\hline
Conditions & Case \\
\hline
$b_0 - t_0 b_1 = 0, t_0 = 0$ & Normal form (i.y1) \\
\hline
$b_0 - t_0 b_1= 0, t_0 \neq 0$ & Normal form (vi) \\
\hline
$b_0 - t_0 b_1 \neq 0, k = 2$ & (ND) \\
\hline
$b_0 - t_0 b_1 \neq 0, k \geq 3, t_0 = 0$ & Normal form (ii.y) \\
\hline
$b_0 - t_0 b_1 \neq 0, k \geq 3, t_0 \neq 0$ & Normal form (iv) \\
\hline
\end{tabular}
\end{center}

The (ND) case follows from Lemma \ref{lemma_second_y_derivatives}, (1) and (3.b).

\medskip
\noindent
{\bf Case: $\rho = 1$, $\alpha_1 = 1$, $\alpha_2 \neq 1$.}

Here we note that the first term in \eqref{subsection_forms_table_1_phi_y} becomes linear,
and therefore does not influence the normal form of $\phi^y_v$.

\begin{center}
\begin{tabular}{|c|c|c|c|}
\hline
Conditions & Case \\
\hline
$t_0 = 0, b_1 = 0$ & Normal form (i.y1) \\
\hline
$t_0 = 0, b_1 \neq 0$ & (ND) \\
\hline
$t_0 \neq 0, b_1 = 0$ & Normal form (vi) \\
\hline
$t_0 \neq 0, b_1 \neq 0, k \geq 3$ & (ND) \\
\hline
$t_0 \neq 0, b_1 \neq 0, k = 2$ & (ND) or (FP) \\
\hline
\end{tabular}
\end{center}

The cases $t_0 = 0, b_1 \neq 0$ and $t_0 \neq 0, b_1 \neq 0, k \geq 3$ are (ND)
by the same argumentation as in the table above for $\rho = 0$, $\gamma \neq 1$
(namely, by applying Lemma \ref{lemma_second_y_derivatives}, (2.a) and (3.a),
in the case $t_0 = 0, b_1 \neq 0$, and by applying Lemma \ref{lemma_second_y_derivatives}, (1) and (2.a),
in the case $t_0 \neq 0, b_1 \neq 0, k \geq 3$).

Let us note the following for the last case where $t_0 \neq 0$, $b_1 \neq 0$, and $k = 2$.
The expression in \eqref{subsection_forms_table_1_phi} can be rewritten as
(after ignoring the first term, which is linear in this case):
\begin{align*}
b_1 x_2 \, &x_1^{1-\gamma}
  + \frac{b_2}{2} x_1^{1-2\gamma} (x_2 - t_0 x_1^\gamma)^2
  + \mathcal{O}\Big((x_2 - t_0 x_1^\gamma)^3 \Big)
\end{align*}
We want to calculate what the Hessian determinant of $\phi^x_v = \phi_v$ at $v$ is
(or equivalently, the Hessian determinant of $\phi$ at $v$).
For this we only need the second derivatives of $\phi$ at $v$,
and so we can freely ignore the last term of size $(x_2 - t_0 x_1^\gamma)^3$.
After expanding the second term in the above expression and
ignoring the linear terms and the term $\mathcal{O}((x_2 - t_0 x_1^\gamma)^3 )$ we get
\begin{align*}
(b_1 - t_0 b_2) x_1^{1-\gamma} x_2 + \frac{b_2}{2} x_1^{1-2\gamma} x_2^2.
\end{align*}
From this it follows by a direct calculation that
\begin{align*}
\partial_1^2 \phi(v) = -\gamma \frac{v_2}{v_1} \partial_1 \partial_2 \phi(v),
\end{align*}
and so
\begin{align*}
\mathcal{H}_\phi(v)
  = - \partial_1 \partial_2 \phi(v) \Big( \partial_1 \partial_2 \phi(v) + \gamma \frac{v_2}{v_1} \partial_2^2 \phi(v)\Big),
\end{align*}
which we note can be rewritten as
\begin{align*}
\mathcal{H}_\phi(v)
  = - \partial_1 \partial_2 \phi^x (v) \, \partial_1 \partial_2 \phi^y(0),
\end{align*}
by \eqref{eq_basic_coordinate_change_reverse}.
This implies in particular that $\mathcal{H}_\phi(v) = 0$
if and only if $\partial_1 \partial_2 \phi(v) = 0$
if and only if $\partial_1^2 \phi(v) = 0$
since by Lemma \ref{lemma_second_y_derivatives}, (2.a), we know that $\partial_1 \partial_2 \phi^y(0) \neq 0$.

Thus, in the last case where $t_0 \neq 0$, $b_1 \neq 0$, and $k = 2$,
we are either in the (ND) case, and otherwise we have $\partial_1^2 \phi(v) = 0$.
This means precisely that the ``$k$'' associated to the flipped coordinates
(and we can flip coordinates since $t_0 \neq 0$, i.e., $v_2 \neq 0$) is necessarily $\geq 3$.
For the flipped coordinates we may now use the previous table
where we have $\rho = 1$, $\alpha_1 \neq 1$, $\alpha_2 = 1$
(or apply Lemma \ref{lemma_k_greater_than_3}).

\medskip
\noindent
{\bf Case: $\rho = \pm 1$, $\alpha_1 = \alpha_2 \neq \rho$.}

Here one uses \eqref{eq_phi_basic_form_y_gamma_1}:
\begin{align*}
\phi^y(y) =
  (v_1+y_1)^{\rho/\alpha_1} b_0 + y_2 (v_1+y_1)^{\rho/\alpha_1-1} b_1 + y_2^k r(y).
\end{align*}

\begin{center}
\begin{tabular}{|c|c|c|c|}
\hline
Conditions & Case \\
\hline
$b_0 = 0, b_1 = 0$ & Normal form (i.y1) \\
\hline
$b_0 = 0, b_1 \neq 0$ & (ND) \\
\hline
$b_0 \neq 0, b_1 = 0, k \geq 3$ & Normal form (ii.y) \\
\hline
$b_0 \neq 0, b_1 = 0, k = 2$ & (ND) \\
\hline
$b_0 \neq 0, b_1 \neq 0, k \geq 3$ & (ND) \\
\hline
$b_0 \neq 0, b_1 \neq 0, k = 2$ & (ND), or (FP), or Normal form (ii.w), or Normal form (i.w1) \\
\hline
\end{tabular}
\end{center}

The first (ND) case $b_0 = 0, b_1 \neq 0$ follows from Lemma \ref{lemma_second_y_derivatives}, (2.a) and (3.c),
the second (ND) case $b_0 \neq 0, b_1 = 0, k = 2$ follows from Lemma \ref{lemma_second_y_derivatives}, (2.a), (3.c), and (1),
and the third (ND) case $b_0 \neq 0, b_1 \neq 0, k \geq 3$ follows from Lemma \ref{lemma_second_y_derivatives}, (1) and (2.a).
For the last case $b_0 \neq 0, b_1 \neq 0, k = 2$ we again refer the reader to Subsection \ref{subsection_forms_derivations_w}.

We give two further remarks.
Firstly, one can show that in the case $b_0 = 0$, $b_1 = 0$
the order of vanishing of the Hessian determinant is precisely equal to $2k-2$
if and only if we additionally have
\begin{align*}
\frac{\rho}{\alpha_1} \notin \{ 1, k \},
\end{align*}
as is shown in Subsection \ref{subsection_forms_derivations_Hessian}.
Note that here we cannot have $\rho/\alpha_1 = 1$,
and when $\rho/\alpha_1 = k$ from Subsection \ref{subsection_forms_derivations_Hessian}
we see that the Hessian determinant vanishes of order $2k+\tilde{k}-2$
where $\tilde{k}$ is the smallest positive integer such that $b_{k+\tilde{k}} \neq 0$
(it is also possible $\tilde{k} = \infty$ with the obvious interpretation).

Secondly, here we can calculate explicitly from the derivatives $b_{\tau_2} = g^{(\tau_2)}(t_0)$
the number $\tilde{k}$ in the Normal form (ii.w)
(see \eqref{section_forms_special_case_coef} in Subsection \ref{subsection_forms_derivations_w}).
This is already known for homogeneous polynomials \cite{FGU04}.

\medskip
\noindent
{\bf Case: $\rho = \pm 1$, $\alpha_1 \neq \rho$, $\alpha_2 \neq \rho$, $\alpha_1 \neq \alpha_2$.}

\begin{center}
\begin{tabular}{|c|c|c|c|}
\hline
Conditions & Case \\
\hline
$b_1 = 0, b_0 = 0, t_0 = 0$ & Normal form (i.y1) \\
\hline
$b_1 = 0, b_0 = 0, t_0 \neq 0$ & Normal form (vi) \\
\hline
$b_1 = 0, b_0 \neq 0, k = 2$ & (ND) \\
\hline
$b_1 = 0, b_0 \neq 0, k \geq 3, t_0 = 0$ & Normal form (ii.y) \\
\hline
$b_1 = 0, b_0 \neq 0, k \geq 3, t_0 \neq 0$ & Normal form (iv) \\
\hline
$b_1 \neq 0, k \geq 3$ & (ND) \\
\hline
$b_1 \neq 0, k = 2, t_0 = 0$ & (ND), or Normal form (iii), or Normal form (i.w2) \\
\hline
$b_1 \neq 0, k = 2, t_0 \neq 0$ &  (ND), or (FP), or Normal form (v), or Normal form (i.w2) \\
\hline
\end{tabular}
\end{center}

The first (ND) case $b_1 = 0, b_0 \neq 0, k = 2$ follows
from Lemma \ref{lemma_second_y_derivatives}, (1), (2.a), and (3.c),
and the second (ND) case $b_1 \neq 0, k \geq 3$
from Lemma \ref{lemma_second_y_derivatives}, (1) and (2.a).
For the very last two cases (namely, $b_1 \neq 0, k = 2, t_0 = 0$ and $b_1 \neq 0, k = 2, t_0 \neq 0$)
we refer the reader, as usual, to Subsection \ref{subsection_forms_derivations_w}.


\subsection{The case when $\rho \neq \alpha_2$, $b_1 \neq 0$, $k = 2$}
\label{subsection_forms_derivations_w}

In this subsection we shall discuss the remaining cases where $y$ coordinates did not suffice and
all of which (as one easily sees from the tables in the previous two subsection)
satisfy $\rho \neq \alpha_2$, $b_1 \neq 0$, $k = 2$.
Here it will turn out that we are either in the (ND) case, or (FP) case,
or that we need to use the $w$ coordinates.
In this case the form of the function $\phi$ in $y$ coordinates is according to \eqref{eq_phi_basic_form_y} equal to
\begin{align*}
\phi^y(y) &=
  (v_1+y_1)^{\rho/\alpha_1} (b_0-t_0 b_1) + (v_2+y_2+\gamma v_2 v_1^{-1}y_1) \, (v_1+y_1)^{(\rho-\alpha_2)/\alpha_1} b_1 \\
  &\quad\,\, + (y_2-y_1^2 \omega(y_1))^2 r(y),
\end{align*}
where $r(0) \neq 0$, and, as noted in Remark \ref{remark_root_identically_null},
$\omega \equiv 0$ if and only if $\gamma = 1$ or $t_0 = 0$, and otherwise $\omega(0) \neq 0$.
By Lemma \ref{lemma_second_y_derivatives}, (1) and (2.a), we have
\begin{align*}
\partial_2^2 \phi^y(0) \neq 0 \qquad \text{and} \qquad \partial_1 \partial_2 \phi^y(0) \neq 0,
\end{align*}
i.e., the $y_2^2$ term and the $y_1 y_2$ term in Taylor expansion of $\phi^y$ do not vanish.
Therefore, depending on what the coefficient of the $y_1^2$ term is,
it can happen that the Hessian determinant vanishes or not.

\bigskip
\noindent
{\bf Case (ND) and the definition of $z$ coordinates.}
If the Hessian determinant does not vanish, we are in the nondegenerate case.
Otherwise, if the Hessian determinant does vanish,
then since $\partial_2^2 \phi(v) \neq 0$ (which is by definition equivalent to $k = 2$),
there is a coordinate system of the form
\begin{align*}
x_1 &= v_1 + z_1,\\
x_2 &= v_2 + z_2 + A z_1,
\end{align*}
with $A$ unique, such that $\phi^x(x) = \phi^z(z)$,
and such that the $z_1^2$ and $z_1 z_2$ terms in Taylor expansion of $\phi^z$ at the origin vanish, i.e.,
\begin{align*}
\partial_1^2 \phi^z(0) = 0 \qquad \text{and} \qquad \partial_1 \partial_2 \phi^z(0) = 0.
\end{align*}
In particular, the coordinate systems $y$ and $z$ cannot coincide since the term $y_1 y_2$ does not vanish.
This implies $B \coloneqq A - \gamma v_2 v_1^{-1} \neq 0$
(compare \eqref{eq_basic_coordinate_change_reverse} and \eqref{eq_basic_coordinate_change_wz}).

\bigskip
\noindent
{\bf Case (FP) and the reduction to $A \neq 0$.}
Let us now prove that we may reduce ourselves to the case
\begin{align*}
A \neq 0.
\end{align*}
If $t_0 = 0$ (i.e., $v_2 = 0$), then we always have $A = B \neq 0$.
The second possibility is $t_0 \neq 0$, and if in this case we would have $A = 0$,
then $z$ and $x$ coordinates would coincide (up to a translation) which implies
$\partial_{x_1}^2 \phi^x(v) = \partial_{z_1}^2 \phi^z(0) = 0$.
Thus, by flipping coordinates, we would have that the $k$ associated to the flipped coordinates is $\geq 3$,
and so we would be in the case where the $y$ coordinates associated to the flipped coordinates would suffice,
i.e., we could apply Lemma \ref{lemma_k_greater_than_3}.

This is also the reason why in the case when $\rho = 1$, $\alpha_1 = 1$, and $\alpha_2 \neq 1$,
it always sufficed to flip coordinates.
The calculation below the corresponding table in Subsection \ref{subsection_forms_table_1} shows that
$\mathcal{H}_\phi(v) = 0$ implies $\partial_1^2 \phi (v) = \partial_1 \partial_2 \phi (v) = 0$,
which in turn implies that one always has $A = 0$.

\bigskip
\noindent
{\bf The normal form in $z$ coordinates.}
Now that we may assume $A \neq 0$,
our first step is to write down the Euler equation for homogeneous functions in $z$ coordinates.
The Euler equation is
\begin{align*}
\rho \phi(x) = \alpha_1 x_1 \partial_1 \phi(x) + \alpha_2 x_2 \partial_2 \phi(x).
\end{align*}
By the definition of $z$ coordinates we have
\begin{align*}
\partial_{x_1} &= \partial_{z_1} - A \partial_{z_2}, \\
\partial_{x_2} &= \partial_{z_2}.
\end{align*}
Thus, the Euler equation in $z$ coordinates is
\begin{align}
\label{eq_Euler_z}
\rho \phi^z(z)
   &= \alpha_1 (v_1+z_1) \partial_1 \phi^z(z) - \alpha_1 A (v_1+z_1) \partial_2 \phi^z(z) + \alpha_2 (v_2 + z_2 + A z_1) \partial_2 \phi^z(z) \nonumber \\
   &= \alpha_1 (v_1+z_1) \partial_1 \phi^z(z) + \Big( -\alpha_1 v_1 B + A(-\alpha_1+\alpha_2)z_1 + \alpha_2 z_2 \Big) \partial_2 \phi^z(z).
\end{align}

We now claim that if $\partial_1^{\tau_1+1} \phi^z(0) = \partial_1^{\tau_1} \partial_2 \phi^z(0) = 0$ for all $1 \leq \tau_1 < N$ for some $N \geq 2$,
then $\partial_1^{N+1} \phi^z(0) = 0$ if and only if $\partial_1^{N} \partial_2 \phi^z(0) = 0$.
But this is almost obvious. Namely, we just take the derivative $\partial_1^N$ at $0$ in the above Euler equation and get
\begin{align*}
\rho \partial_1^N \phi^z(0)
   &= \alpha_1 v_1 \partial^{N+1}_1 \phi^z(0) + \alpha_1 N \partial^{N}_1 \phi^z(0) \\
   &\quad - \alpha_1 v_1 B \partial^{N}_1 \partial_2 \phi^z(0)
          + AN(-\alpha_1+\alpha_2) \partial^{N-1}_1 \partial_2 \phi^z(0).
\end{align*}
Using the assumption on vanishing derivatives we get
\begin{align}
\label{section_forms_normal_form_z_derivative_rel}
\partial^{N+1}_1 \phi^z(0) = B \partial^{N}_1 \partial_2 \phi^z(0).
\end{align}
As we noted above $B \neq 0$ and our claim follows.

\medskip
Now recall that $\partial^{2}_1 \phi^z(0) = 0$ and $\partial_1 \partial_2 \phi^z(0) = 0$.
Thus, the previously proved claim implies in particular by an inductive argument in $N$ that
either there is a $\tilde{k} \in \N$ such that $3 \leq \tilde{k} < \infty$, satisfying
\begin{align*}
\tilde{k}
  &= \min \{j \geq 2 : \partial^{j}_1 \phi^z(0) \neq 0 \} \\
  &= \min \{j \geq 2 : \partial^{j-1}_1 \partial_2 \phi^z(0) \neq 0 \},
\end{align*}
and
\begin{align}
\label{section_forms_normal_form_z}
\phi_v^z(z) = z_1^{\tilde{k}} r_1(z) + z_1^{\tilde{k}-1} z_2 r_2(z) + z_2^2 r_3(z),
\end{align}
where $r_i(0) \neq 0$, $i = 1,2,3$, or that
\begin{align*}
\phi_v^z(z) = z_1^N r_{N,1}(z) + z_1^{N-1} z_2 r_{N,2}(z) + z_2^2 r_3(z),
\end{align*}
for any $N \in \N$, which we shall consider as the case when $\tilde{k} = \infty$.

\bigskip
\noindent
{\bf The normal form in $w$ coordinates.}
It will be advantageous to use $w$ coordinates where
unlike in \eqref{section_forms_normal_form_z} the $w_1^{\tilde{k}-1} w_2$ term is no longer present,
i.e., that we may write:
\begin{align}
\label{eq_phi_first_w_form}
\phi_v^w(w) = w_1^{\tilde{k}} r_1(w) + w_2^2 r_2(w).
\end{align}
This fact follows directly from \eqref{section_forms_normal_form_z_derivative_rel} and from
\begin{align*}
\partial_{w_1} &= \partial_{z_1}, \\
\partial_{w_2} &= \partial_{z_2} - \frac{1}{B}\partial_{z_1},
\end{align*}
which we get from the definition of $w$ coordinates \eqref{eq_basic_coordinate_change_wz}.
Actually, we can gain more information, especially in the case when $\gamma = 1$.
To see this let us rewrite the Euler equation in $w$ coordinates by using \eqref{eq_Euler_z}:
\begin{align*}
\frac{\rho}{\alpha_1} \phi^w(w)
   &= \Big( v_1+w_1 - \frac{1}{B}w_2 \Big) \partial_1 \phi^w(w) \nonumber \\
   &\quad + \Big( - v_1 B + A(\gamma-1)(w_1 - \frac{1}{B}w_2) + \gamma w_2 \Big) (\partial_2 + \frac{1}{B} \partial_1) \phi^w(w) \nonumber \\
   &= \Big( \frac{B+A(\gamma-1)}{B} w_1 + \frac{(B-A)(\gamma-1)}{B^2} w_2 \Big) \partial_1 \phi^w(w) \nonumber \\
   &\quad + \Big( - v_1 B + A(\gamma-1)w_1  + \frac{B\gamma - A(\gamma-1)}{B} w_2 \Big) \partial_2 \phi^w(w).
\end{align*}

\medskip

{\bf Case $\gamma = 1$.}
Here the Euler equation reduces to
\begin{align}
\label{eq_Euler_w_gamma_1}
\frac{\rho}{\alpha_1} \phi^w(w)
   = w_1 \partial_1 \phi^w(w) + ( - v_1 B  + w_2) \partial_2 \phi^w(w).
\end{align}
Taking the $\partial^\tau = \partial_1^{\tau_1} \partial_2^{\tau_2}$ derivative
and evaluating at $0$ one gets
\begin{align*}
\frac{\rho}{\alpha_1} \partial^\tau \phi^w(0)
   = \tau_1 \partial^\tau \phi^w(0)
   - v_1 B \partial_1^{\tau_1} \partial_2^{\tau_2+1} \phi^w(0)
   + \tau_2 \partial^\tau \phi^w(0),
\end{align*}
which can be rewritten as
\begin{align*}
\Big(\frac{\rho}{\alpha_1}-|\tau|\Big) \partial^\tau \phi^w(0)
   =  - v_1 B \partial_1^{\tau_1} \partial_2^{\tau_2+1} \phi^w(0).
\end{align*}
From this and the fact from \eqref{eq_phi_first_w_form} that
$\partial^\tau \phi^w(0) = 0$ for all $\tau$ satisfying
$|\tau| = \tau_1+\tau_2 \geq 2$, $0 \leq \tau_1 \leq \tilde{k}-1$,  and $0 \leq \tau_2 \leq 1$,
one easily gets by induction on $\tau_2$ that
\begin{align}
\label{diff_initial_conditions}
\partial_1^{\tau_1} \partial_2^{\tau_2} \phi^w (0) = 0
\quad \text{when} \quad |\tau| = \tau_1+\tau_2 \geq 2, \,\, 1 \leq \tau_1 \leq \tilde{k}-1.
\end{align}

We may actually prove a stronger claim,
namely that
\begin{align}
\label{diff_claim}
\begin{split}
\partial_1^{\tau_1} \phi^w(0,w_2) &\equiv 0,  \qquad \quad \text{for} \,\, 2 \leq \tau_1 \leq \tilde{k}-1,\\
\partial_1 \phi^w(0,w_2) &\equiv \partial_1 \phi^w(0).
\end{split}
\end{align}
In order to obtain this we take the $\partial_1^{\tau_1}$ derivative in \eqref{eq_Euler_w_gamma_1} and
evaluate it at $(0,w_2)$ to get
\begin{align*}
\Big(\frac{\rho}{\alpha_1}- \tau_1 \Big) \partial_1^{\tau_1} \phi^w(0,w_2)
   = ( - v_1 B  + w_2) \partial_2 \partial_1^{\tau_1} \phi^w(0,w_2).
\end{align*}
We note that this is a simple ordinary differential equation in $w_2$ of first order.
It has a unique solution for $2 \leq \tau_1 \leq \tilde{k}-1$ since $- v_1 B  + w_2 \neq 0$ for small $w_2$,
and since we can take \eqref{diff_initial_conditions} as initial conditions.
The claim for $2 \leq \tau_1 \leq \tilde{k}-1$ follows
since $\partial_1^{\tau_1} \phi^w(0,w_2) \equiv 0$ is obviously a solution.
For $\tau_1 = 1$ we note that the case $\rho/\alpha_1 - \tau_1 = 0$ is trivial,
and the solution is a unique constant function (necessarily equal to $\partial_1 \phi^w(0)$).
When $\tau_1 = 1$ and $\rho/\alpha_1 - \tau_1 \neq 0$,
then the differential equation evaluated at $w_2 = 0$ gives us that
$\partial_1 \partial_2 \phi^w(0) = 0$ implies $\partial_1 \phi^w(0) = 0$,
which again means that $\partial_1 \phi^w(0,w_2) \equiv 0$ is the unique solution 
of the given differential equation.
We have thus proven \eqref{diff_claim}.

Now by using Taylor approximation in $w_1$ for a fixed $w_2$,
and the just proven fact
for the mapping $w_2 \mapsto \partial_1^{\tau_1} \phi^w(0,w_2)$ for $1 \leq \tau_1 \leq \tilde{k}-1$,
we obtain that the normal form of $\phi^w$ \eqref{eq_phi_first_w_form}
in the case $\gamma = 1$ can be rewritten as
\begin{align*}
\phi_v^w(w) = w_1^{\tilde{k}} r_1(w) + w_2^2 r_2(w_2),
\end{align*}
where $r_1(0), r_2(0) \neq 0$.
Note that now $r_2$ depends only on $w_2$.
This corresponds to Normal form (ii.w) when $\tilde{k}$ is finite and to Normal form (i.w1) otherwise.

\medskip

{\bf Case $\gamma \neq 1$.}
In this case we use our assumption that $A \neq 0$ in a critical way.
Here it will be important to know what happens with $\partial_1^{\tau_1} \partial_2^2 \phi^w(0)$ for $0 \leq \tau_1 \leq \tilde{k}-1$,
and also how one can rewrite the normal form of the Hessian determinant $\mathcal{H}_{\phi^w}$ (and in particular its root).

Let us begin by taking the $\partial_1^{\tau_1} \partial_2$ derivative of the Euler equation in $w$ coordinates
and evaluating it at $w = 0$.
One gets
\begin{align*}
\frac{\rho}{\alpha_1} \partial_1^{\tau_1} \partial_2 \phi^w(0)
   &= \tau_1 \frac{B+A(\gamma-1)}{B} \partial_1^{\tau_1} \partial_2 \phi^w(0)  + \frac{(B-A)(\gamma-1)}{B^2} \partial_1^{\tau_1+1} \phi^w(0) \\
   &\quad - v_1 B \partial_1^{\tau_1} \partial^2_2 \phi^w(0) + \tau_1 A(\gamma-1) \partial_1^{\tau_1-1} \partial_2^2 \phi^w(0) \\
   &\quad + \frac{B\gamma - A(\gamma-1)}{B} \partial_1^{\tau_1} \partial_2 \phi^w(0).
\end{align*}
Now recall again from \eqref{eq_phi_first_w_form} that
$\partial^\tau \phi^w(0) = 0$ holds for any $\tau$ satisfying
$|\tau| = \tau_1+\tau_2 \geq 2$, $0 \leq \tau_1 \leq \tilde{k}-1$,  and $0 \leq \tau_2 \leq 1$.
Thus, if $1 \leq \tau_1 \leq \tilde{k}-2$ then we get
\begin{align}
\label{section_forms_normal_form_w_info_1}
v_1 B \partial_1^{\tau_1} \partial^2_2 \phi^w(0)
   &= \tau_1 A(\gamma-1) \partial_1^{\tau_1-1} \partial_2^2 \phi^w(0),
\end{align}
and if $\tau_1 = \tilde{k}-1$, then
\begin{align*}
v_1 B \partial_1^{\tilde{k}-1} \partial^2_2 \phi^w(0)
   &= \frac{(B-A)(\gamma-1)}{B^2} \partial_1^{\tilde{k}} \phi^w(0)
    + (\tilde{k}-1) A(\gamma-1) \partial_1^{\tilde{k}-2} \partial_2^2 \phi^w(0),
\end{align*}
i.e., since $B-A = - \gamma v_2 v_1^{-1}$, we can rewrite this as
\begin{align}
\label{section_forms_normal_form_w_info_2}
v_1 B \partial_1^{\tilde{k}-1} \partial^2_2 \phi^w(0)
    + \frac{v_2 \gamma(\gamma-1)}{v_1 B^2} \partial_1^{\tilde{k}} \phi^w(0)
    = (\tilde{k}-1) A(\gamma-1) \partial_1^{\tilde{k}-2} \partial_2^2 \phi^w(0).
\end{align}
Now since $A,B,v_1 \neq 0$, and $\gamma \neq 1$,
from \eqref{section_forms_normal_form_w_info_1}
we may conclude by induction on $\tau_1$
that for $0 \leq \tau_1 \leq \tilde{k}-2$ one has
\begin{align*}
\partial_1^{\tau_1} \partial_2^2 \phi^w(0) \neq 0.
\end{align*}

In order to unravel what is happening with $\partial_1^{\tilde{k}-1} \partial_2^2 \phi^w(0)$
we need to investigate the root of $\mathcal{H}_{\phi^w}$.
For this we want to solve the equation
\begin{align*}
x_2 - t_0 x_1^\gamma
  &= y_2 - \binom{\gamma}{2} v_1^{-2} v_2 y_1^2 + \mathcal{O}(y_1^3) \\
  &= 0.
\end{align*}
in the $w$ coordinates, representing the homogeneity curve through $v$.
Recall that by \eqref{eq_basic_coordinate_change_yzw} we have $y_1 = w_1 - w_2/B$, $y_2 = B w_1$, and so we want to solve
\begin{align*}
B w_1 - \binom{\gamma}{2} v_1^{-2} v_2 (w_1 - \frac{1}{B}w_2)^2 + \mathcal{O}((w_1 - \frac{1}{B}w_2)^3) = 0
\end{align*}
for the $w_1$ variable in terms of the $w_2$ variable when $|w_1|, |w_2|$ are small numbers.
Using the above equation one gets by a simple calculation that
\begin{align}
\label{section_forms_normal_form_w_root}
w_1 &= \frac{v_2 \gamma (\gamma-1)}{2 v_1^2 B^3} w_2^2 + \mathcal{O}(w_2^3) \nonumber \\
    &= w_2^2 \tilde{\omega}(w_2),
\end{align}
and $\tilde{\omega} \equiv 0$ if and only if $v_2 = 0 = t_0$.
Note that we have the precise value of $\tilde{\omega}(0)$.
Using this we can now write down the normal form of $w$ as
\begin{align}
\label{eq_phi_last_w_form}
\phi_v^w(w)
  &= w_1^{\tilde{k}} r_1(w) + w_2^2 r_2(w) \nonumber \\
  &= (w_1 - w_2^2 \tilde{\omega}(w_2))^{\tilde{k}} r_1(w)
     + w_2^2 \Big(r_2(w) + {\tilde{k}} w_1^{\tilde{k}-1} \frac{v_2 \gamma (\gamma-1)}{2 v_1^2 B^3} r_1(w) \Big)
     + \mathcal{O}(w_2^4) \nonumber \\
  &= (w_1 - w_2^2 \tilde{\omega}(w_2))^{\tilde{k}} \tilde{r}_1(w) + w_2^2 \tilde{r}_2(w),
\end{align}
where one can easily check by using
\eqref{section_forms_normal_form_w_info_1}, \eqref{section_forms_normal_form_w_info_2},
\eqref{section_forms_normal_form_w_root}, and \eqref{eq_phi_last_w_form}
that $\partial_1^{\tau_1} \tilde{r}_2 (0) \neq 0$ for all $0 \leq \tau_1 \leq \tilde{k}-1$,
and that in fact one has the relations
\begin{align*}
v_1 B \partial_1^{\tau_1} \tilde{r}_2 (0) = \tau_1 A (\gamma-1) \partial_1^{\tau_1-1} \tilde{r}_2 (0)
\end{align*}
for $1 \leq \tau_1 \leq \tilde{k}-1$.
If $\tilde{k} = \infty$, then the above normal form in \eqref{eq_phi_last_w_form} corresponds to Normal form (i.w2).
Otherwise we have $3 \leq \tilde{k} < \infty$ and two subcases.
Namely, if $t_0 \neq 0$ (i.e., $\tilde{\omega}(0) \neq 0$),
then the above normal form corresponds to Normal form (v),
and if $t_0 = 0$ (and therefore $\tilde{\omega} \equiv 0$),
then it corresponds to Normal form (iii).

\bigskip
\noindent
{\bf Determining $\tilde{k}$ in the special case when $\rho = \pm 1$ and $\alpha_1=\alpha_2 \neq \rho$.}
According to the corresponding table for this case in Subsection \ref{subsection_forms_table_1}
here we may assume $b_0, b_1 \neq 0$, and note that here $\gamma = 1$.
We prove that the Hessian determinant of $\phi$ vanishes at $v$ if and only if
\begin{align}
\label{section_forms_special_case_Hessian_vanish}
b_2 = (1-\rho\alpha_1) \frac{b_1^2}{b_0} = \Big(1 - \frac{\alpha_1}{\rho}\Big) \frac{b_1^2}{b_0}.
\end{align}
%
In this case we furthermore have that if $\tilde{k} < \infty$, then
\begin{align}
\label{section_forms_special_case_coef}
\begin{split}
b_j &= (\rho \alpha_1)^j j! \binom{\rho/\alpha_1}{j} \frac{b_1^j}{b_0^{j-1}}, \qquad \qquad \text{for} \,\, j = 2, \ldots, \tilde{k}-1 \\
b_{\tilde{k}} &\neq (\rho \alpha_1)^{\tilde{k}} \tilde{k}! \binom{\rho/\alpha_1}{\tilde{k}} \frac{b_1^{\tilde{k}}}{b_0^{\tilde{k}-1}},
\end{split}
\end{align}
and if $\tilde{k} = \infty$, then
\begin{align*}
b_j &= (\rho \alpha_1)^j j! \binom{\rho/\alpha_1}{j} \frac{b_1^j}{b_0^{j-1}}, \qquad \qquad \text{for} \,\, j \in \{2,3,\ldots\} .
\end{align*}
These formulae have already been shown for homogeneous polynomials in \cite[Lemma 2.2]{FGU04}.
Therefore, we only sketch how one can prove them in our slightly more general case.

Recall from \eqref{eq_phi_basic_form_series_y}
that we have the formal series for $\phi$ at $y = 0$:
\begin{align*}
\phi^y(y)
  &\approx (v_1+y_1)^{\frac{\rho}{\alpha_1}} b_0 + y_2 (v_1+y_1)^{\frac{\rho}{\alpha_1} - 1} b_1
   + \frac{1}{2!} (v_1+y_1)^{\frac{\rho}{\alpha_1}-2} y_2^2 b_2 + \ldots\\
  &= \sum_{j = 0}^\infty \frac{b_j}{j!} (v_1 + y_1)^{\frac{\rho}{\alpha_1}-j} y_2^j.
\end{align*}
From this one gets
\begin{align*}
\partial_1^2 \phi^y (0) &= b_0 \frac{\rho}{\alpha_1} \Big( \frac{\rho}{\alpha_1}-1 \Big) v_1^{\frac{\rho}{\alpha_1}-2}, &
\partial_1 \partial_2 \phi^y (0) &= b_1 \Big( \frac{\rho}{\alpha_1}-1 \Big) v_1^{\frac{\rho}{\alpha_1}-2}, &
\partial_2^2 \phi^y (0) &= b_2 v_1^{\frac{\rho}{\alpha_1}-2},
\end{align*}
and \eqref{section_forms_special_case_Hessian_vanish} follows by a direct computation
(recall that $\mathcal{H}_{\phi^y}(0) = 0$ if and only if $\mathcal{H}_{\phi}(v) = 0$).
More generally, we have
\begin{align}
\label{section_forms_special_case_taylor}
\partial^\tau \phi^y(0) = \tau_1! {\frac{\rho}{\alpha_1} - \tau_2 \choose \tau_1} v_1^{\frac{\rho}{\alpha_1}-|\tau|} b_{\tau_2}.
\end{align}

Let us now determine the relation between $y$ and $z$ when the Hessian determinant vanishes.
We may write
\begin{align*}
z_1 &= y_1,         &  \partial_{z_1} &= \partial_{y_1} + B \partial_{y_2}, \\
z_2 &= y_2 - B y_1, &  \partial_{z_2} &= \partial_{y_2}.
\end{align*}
Then by \eqref{section_forms_special_case_Hessian_vanish}
one gets that $\partial_1^2 \phi^z(0) = \partial_1 \partial_2 \phi^z(0) = 0$
if and only if
\begin{align*}
B = -\frac{b_0}{b_1} \frac{\rho}{\alpha_1}.
\end{align*}
From this we can determine the constant $A$ since it is equal to $t_0+B$,
i.e., $A = v_2/v_1 - (\rho b_0)/(\alpha_1 b_1)$.

One can now directly prove \eqref{section_forms_special_case_coef}
by induction in $j$ by using \eqref{section_forms_special_case_taylor},
and the fact that
$\partial_1^{j} \phi^z(0) = 0$ for $2 \leq j < \tilde{k}$ and $\partial_1^{\tilde{k}} \phi^z(0) \neq 0$
is equivalent to
\begin{align*}
\Big(\partial_1 - \frac{b_0}{b_1} \frac{\rho}{\alpha_1} \partial_2 \Big)^j \phi^y (0) &= 0, \qquad j = 2, \ldots, \tilde{k}-1,  \\
\Big(\partial_1 - \frac{b_0}{b_1} \frac{\rho}{\alpha_1} \partial_2 \Big)^{\tilde{k}} \phi^y (0) &\neq 0.
\end{align*}
We have already checked the induction base $j = 2$.

\subsection{Order of vanishing of the Hessian determinant}
\label{subsection_forms_derivations_Hessian}

In this subsection we determine the normal forms of the Hessian determinant of $\phi$
(or more precisely, the order of vanishing of the Hessian determinant of $\phi$),
as listed in Subsection \ref{subsection_forms_considerations}.
We recall from Subsection \ref{subsection_forms_considerations} that if $v_1 > 0$,
then one can write
\begin{align*}
\mathcal{H}_\phi (x) = (x_2 - t_0 x_1^{\gamma})^N q(x),
\end{align*}
where either $q$ is flat in $v$ (which we consider as the case $N = \infty$),
or $q(v) \neq 0$ and $0 \leq N < \infty$.
It remains to determine $N$ from the information provided by the normal forms of $\phi$.
We note that
\begin{align*}
N = \min\{j \geq 0 : (\partial_2^j \mathcal{H})(v) \neq 0 \}.
\end{align*}

\bigskip
\noindent
{\bf Normal form (i.y1).}
First we note by the normal form tables above
that this normal form appears only in cases when either $\gamma = 1$ or $t_0 = v_2 = 0$,
and so we have $\omega \equiv 0$.
Thus, by \eqref{eq_phi_basic_form_series_y} the function $\phi^y_v$ has the formal expansion:
\begin{align}
\label{subsection_forms_derivations_phi_expansion}
\phi_v^y(y)
  &= \frac{1}{k!} y_2^k \, (y_1+v_1)^{\rho/\alpha_1-k\gamma} \, g_k(y_2 (y_1+v_1)^{-1} +t_0) \nonumber \\
  &\approx \sum_{j = k}^\infty \frac{b_j}{j!} y_2^j \, (y_1+v_1)^{\rho/\alpha_1 - j\gamma},
\end{align}
and the Hessian determinant vanishes along $y_2 = 0$,
which means we need to determine what is the least $N$ such that $(\partial_2^N \mathcal{H}_{\phi^y}) (0) \neq 0$.
From the above expansion one obtains
\begin{align}
\label{subsection_forms_derivations_taylor_coeff}
\begin{split}
\partial_1^{\tau_1} \partial_2^{\tau_2} \phi^y (0)
  &= 0, \qquad |\tau| = \tau_1 + \tau_2 \geq 2, \,\, 0 \leq \tau_2 \leq k-1, \\
\partial^\tau \phi^y(0)
  &= \tau_1! {\frac{\rho}{\alpha_1} - \gamma \tau_2 \choose \tau_1} v_1^{\frac{\rho}{\alpha_1}- \tau_1 - \gamma \tau_2}
     b_{\tau_2}, \qquad \tau_2 \geq k.
\end{split}
\end{align}
By applying the general Leibniz rule to the definition of the Hessian determinant we get
\begin{align}
\label{subsection_forms_derivations_Leibniz}
\partial_2^N \mathcal{H}_{\phi^y}
  &= \partial_2^N (\partial_1^2 \phi^y \partial_2^2 \phi^y - (\partial_1 \partial_2 \phi^y)^2) \nonumber \\
  &= \sum_{n=0}^N {N \choose n} \Big(\partial_1^2 \partial_2^{n} \phi^y \, \partial_2^{N+2-n} \phi^y - \partial_1 \partial_2^{n+1} \phi^y \, \partial_1 \partial_2^{N+1-n} \phi^y \Big),
\end{align}
and one can easily check by using \eqref{subsection_forms_derivations_taylor_coeff}
that $\partial_2^N \mathcal{H}_{\phi^y}(0) = 0$ for $N < 2k-2$.
For $N = 2k-2$ we get
\begin{align*}
\partial_2^{2k-2} \mathcal{H}_{\phi^y} (0)
  &= {2k-2 \choose k}  \partial_1^2 \partial_2^k \phi^y(0) \partial_2^{k} \phi^y (0) - {2k-2 \choose k-1} (\partial_1 \partial_2^k \phi^y)^2 (0) \\
  &= \Bigg[ {2k-2 \choose k} \Big( \frac{\rho}{\alpha_1} - k\gamma \Big) \Big( \frac{\rho}{\alpha_1} - k\gamma - 1 \Big)
   - {2k-2 \choose k-1} \Big(\frac{\rho}{\alpha_1} - k\gamma \Big)^2 \Bigg]
   b_k^2 v_1^{2 \frac{\rho}{\alpha_1} - 2k\gamma - 2} \\
  &= \Bigg[ \frac{k-1}{k} \Big( \frac{\rho}{\alpha_1} - k\gamma - 1 \Big)
   - \Big(\frac{\rho}{\alpha_1} - k\gamma \Big) \Bigg]
   {2k-2 \choose k-1} \Big( \frac{\rho}{\alpha_1} - k \gamma \Big) b_k^2 v_1^{2 \frac{\rho}{\alpha_1} - 2k\gamma - 2}.
\end{align*}
Thus, $\partial_2^{2k-2} \mathcal{H}_{\phi^y} (0) \neq 0$ if and only if
\begin{align}
\label{subsection_forms_derivations_conditions_1}
\frac{\rho}{\alpha_1} \notin \Big\{ k\gamma, \, k \gamma + 1-k \Big\}.
\end{align}
Let us now additionally assume that
\begin{align*}
b_{k+j} &= 0, \qquad 0 < j < \tilde{k}, \\
b_{k+\tilde{k}} &\neq 0,
\end{align*}
for some $\tilde{k} \geq 1$.

\medskip

{\bf Case when $\frac{\rho}{\alpha_1} = k \gamma$.}
By examining the term $j = k$ in \eqref{subsection_forms_derivations_phi_expansion}
we note that in this case we additionally have
\begin{align*}
\partial_2^{k} \phi^y (0) &\neq 0 \\
\partial_1^{\tau_1} \partial_2^{k} \phi^y (0) &= 0, \qquad \tau_1 \geq 1.
\end{align*}
Now by using the information in \eqref{subsection_forms_derivations_taylor_coeff},
the above additional assumption that $b_{k+j} = 0$ for $0 < j < \tilde{k}$, $b_{k+\tilde{k}} \neq 0$,
and the Leibniz formula \eqref{subsection_forms_derivations_Leibniz}
a straightforward calculation yields that
$\partial_2^N \mathcal{H}_{\phi^y}(0) = 0$ for $N < 2k+\tilde{k}-2$ and $\partial_2^{2k+\tilde{k}-2} \mathcal{H}_{\phi^y}(0) \neq 0$,
i.e., we have the precise order of vanishing of the Hessian determinant.

\medskip

{\bf Case when $\frac{\rho}{\alpha_1} = k \gamma + 1-k$.}
Again, by a straightforward calculation using the Leibniz formula one gets that
$\partial_2^N \mathcal{H}_{\phi^y}(0) = 0$ for $N < 2k+\tilde{k}-2$ and
we have for $N = 2k+\tilde{k}-2$:
\begin{align*}
\partial_2^{2k+\tilde{k}-2} \mathcal{H}_{\phi^y}(0)
  &=        {2k+\tilde{k}-2 \choose k}  \partial_1^2 \partial_2^k \phi^y(0) \partial_2^{k+\tilde{k}} \phi^y (0) +
            {2k+\tilde{k}-2 \choose k-2}  \partial_1^2 \partial_2^{k+\tilde{k}} \phi^y(0) \partial_2^{k} \phi^y (0) \\
  &\qquad - 2{2k+\tilde{k}-2 \choose k-1} \partial_1 \partial_2^k \phi^y (0) \, \partial_1 \partial_2^{k+\tilde{k}} \phi^y (0).
\end{align*}
Thus
\begin{align*}
\Bigg({2k+\tilde{k}-2 \choose k-2} \Bigg)^{-1} \partial_2^{2k+\tilde{k}-2} \mathcal{H}_{\phi^y}(0)
  &=        \frac{(k+\tilde{k})(k+\tilde{k}-1)}{(k-1)k}  \partial_1^2 \partial_2^k \phi^y(0) \partial_2^{k+\tilde{k}} \phi^y (0) +
            \partial_1^2 \partial_2^{k+\tilde{k}} \phi^y (0) \partial_2^{k} \phi^y (0) \\
  &\qquad - \frac{2(k+\tilde{k})}{k-1} \partial_1 \partial_2^k \phi^y (0) \, \partial_1 \partial_2^{k+\tilde{k}} \phi^y (0).
\end{align*}
This is equal to zero when the expression
\begin{align*}
&(k+\tilde{k})(k+\tilde{k}-1) \partial_1^2 \partial_2^k \phi^y (0) \partial_2^{k+\tilde{k}} \phi^y (0) \\
 &+ (k-1)k \partial_1^2 \partial_2^{k+\tilde{k}} \phi^y (0) \partial_2^{k} \phi^y (0)
 - 2k(k+\tilde{k})\partial_1 \partial_2^k \phi^y (0) \, \partial_1 \partial_2^{k+\tilde{k}} \phi^y (0)
\end{align*}
equals zero.
Plugging in the values of the derivatives from \eqref{subsection_forms_derivations_taylor_coeff}
one obtains that the above expression equals to
\begin{align*}
(k+\tilde{k})(k+\tilde{k}-1) (1-k)(-k)
 + (k-1)k (1-k-\tilde{k}\gamma) (-k-\tilde{k}\gamma)
 - 2k(k+\tilde{k}) (1-k) (1-k-\tilde{k}\gamma),
\end{align*}
up to a nonzero constant factor.
Factoring out $(1-k)(-k)$ we get
\begin{align*}
(k+\tilde{k})(k+\tilde{k}-1)
 + (k+\tilde{k}\gamma-1) (k+\tilde{k}\gamma)
 - 2(k+\tilde{k}) (k+\tilde{k}\gamma-1)
\end{align*}
and this equals zero if and only if $\gamma \in \{1, (\tilde{k}+1)/\tilde{k} \}$.

The condition $\frac{\rho}{\alpha_1} = k \gamma + 1 - k$ tells us that if $\gamma = 1$ then $\rho = \alpha_1 = \alpha_2 = 1$,
and from the normal form tables we see that this is precisely when the Hessian determinant vanishes of infinite order.

In the case $\gamma = (\tilde{k}+1)/\tilde{k}$
we get that $\rho = 1$, $\alpha_1 = \tilde{k}/(k+\tilde{k})$ and $\alpha_2 = (\tilde{k}+1)/(k+\tilde{k})$.
It seems that in this case the order of vanishing of the Hessian determinant
depends explicitly on the values $b_j$,  and so, in contrast to the previous cases,
one cannot relate in an easy way the order of vanishing of the Hessian determinant and
the form of $\phi$ in \eqref{subsection_forms_derivations_phi_expansion}.
As we shall not need the precise order of vanishing of the Hessian determinant in this case,
we do not pursue this question further.


\bigskip
\noindent
{\bf Other normal forms.}
First we recall that Normal form (i.y2) was dealt with in Subsection \ref{subsection_forms_flat},
and there it was already determined that the Hessian vanishes of infinite order (i.e., it is flat).

In all the remaining normal forms we use either $y$ or $w$ coordinates,
and so (as already noted in Subsection \ref{subsection_forms_considerations})
the Hessian determinant in these coordinates has the normal form
\begin{align*}
\mathcal{H}_{\phi^u}(u) = (u_2-u_1^2 \psi(u_1))^{N} q(u),
\end{align*}
where $u$ can represent either $y$ or $w$ coordinates,
and where either $N$ is finite and $q(0) \neq 0$,
or the Hessian determinant is flat (in which case we consider $N$ to be infinite).
The function $\psi$ is equal to either $\omega$ or $\tilde{\omega}$.
Our goal is to determine $N = \min \{ j \geq 0 : (\partial_2^j \mathcal{H}_{\phi^u}) (0) \neq 0 \}$.

We first note that we can rewrite all the remaining normal forms as either
\begin{align}
\label{eq_nf_case_1}
\phi_v^u(u) = (u_2-u_1^2 \psi(u_1))^{k_0} r(u)
\end{align}
or
\begin{align}
\label{eq_nf_case_2}
\phi_v^u(u) = u_1^2 r_1(u) + u_2^{k_0} r_2(u),
\end{align}
where $r(0), \psi(0), r_1(0), r_2(0) \neq 0$, and $k_0 \geq 2$ in the first case and $k_0 \geq 3$ in the second.
In the second case $k_0 = \infty$ is allowed with an obvious interpretation.
Note that the second case \eqref{eq_nf_case_2} includes Normal forms (ii), (iii), (iv), (v),
and also subcases of (i) where the $w$ coordinates are used.

For both cases \eqref{eq_nf_case_1} and \eqref{eq_nf_case_2}
one can use the Leibniz rule \eqref{subsection_forms_derivations_Leibniz}
and the information on the Taylor series of $\phi_v^u$ gained from these normal forms
to obtain the order of vanishing of the Hessian determinant (in the $\partial_{u_2}$ direction) by a direct calculation.
In the first case \eqref{eq_nf_case_1} one gets that the order of vanishing is $N = 2k_0-3$ and
in the second case \eqref{eq_nf_case_2} one gets that $N = k_0-2$
(or that the Hessian determinant is flat if $k_0 = \infty$).



\section{Fourier restriction when a mitigating factor is present}
\label{section_strichartz_mitigating}

In this section we prove Theorem \ref{theorem_Strichartz_mitigating}, i.e., the Fourier restriction estimate
\begin{align*}
\Vert \widehat{f} \Vert_{L^2(\mathrm{d}\mu)} \leq C \Vert f\Vert_{L^{\mathfrak{p}_3}_{x_3}(L^{\mathfrak{p}_1}_{(x_1,x_2)})},
\end{align*}
where $\mu$ is the surface measure
\begin{align*}
\langle \mu, f \rangle = \int_{\R^2\setminus\{0\}} f(x,\phi(x)) \, |\mathcal{H}_\phi(x)|^{\sigma} \, \mathrm{d}x
\end{align*}
and the exponents are
\begin{align*}
\Big(\frac{1}{\mathfrak{p}_1'}, \frac{1}{\mathfrak{p}_3'}\Big) = \Big(\frac{1}{2} - \sigma, \sigma \Big).
\end{align*}
We assume $0 \leq \sigma < 1/2$ when only adapted normal forms appear,
and $0 \leq \sigma \leq 1/3$ if a non-adapted normal form appears.
Since the case $\sigma = 0$ follows directly by Plancherel, we may assume $\sigma > 0$.

Our assumptions in this case are that the Hessian determinant $\mathcal{H}_\phi$
does not vanish of infinite order anywhere (i.e., condition (H2) is satisfied).
According to Subsection \ref{subsection_prelim_further} we may restrict our attention to the localized measure
\begin{align*}
\langle \mu_{0,v}, f \rangle = \int_{\R^2\setminus\{0\}} f(x,\phi_v(x-v)) \, \eta_v(x) \, |\mathcal{H}_\phi(x)|^{\sigma} \, \mathrm{d}x,
\end{align*}
where $v = (v_1, v_2)$ satisfies $v_1 \sim 1$, and either $v_2 = 0$ or $v_2 \sim 1$,
and where $\eta_v$ is a smooth nonnegative function with support in a small neighbourhood of $v$.

After changing to $y$ or $w$ coordinates from Section \ref{section_forms} we get that $\mu_{0,v}$ can be rewritten as
\begin{align*}
\langle \nu, f \rangle = \int f(x,\phi_{\text{loc}}(x)) \, a(x) \, |\mathcal{H}_{\phi_{\text{loc}}}(x)|^{\sigma} \, \mathrm{d}x,
\end{align*}
where now $a$ is smooth,
nonnegative, and supported in a small neighbourhood of the origin,
and where we have for $\phi_{\text{loc}}$ the normal form cases (i)-(vi) from Proposition \ref{proposition_normal_forms}.
Recall that since we assume (H2), in case (i) of Proposition \ref{proposition_normal_forms}
the function $\varphi$ vanishes identically.

\medskip

The strategy will be to appropriately localize and rescale the problem,
and then to use the associated ``$R^* R$'' operator.
Let us begin by proving modifications of two essentially known results.
\begin{lemma}
\label{lemma_simple_stationary}
Let $\phi : \Omega \to \R$ be a smooth function
on an open set $\Omega \subseteq \R^2$ contained in a ball of radius $\lesssim 1$,
and let $\mathcal{H}_\phi = \partial_1^2 \phi \partial_2^2 \phi - (\partial_1 \partial_2 \phi)^2$
denote the Hessian determinant of $\phi$.
We consider the measure defined by
\begin{align*}
\langle \mu, f\rangle \coloneqq \int f(x_1,x_2,\phi(x)) a(x) \mathrm{d}x,
\end{align*}
where $a \in C_c^\infty(\Omega)$ satisfies
$\Vert \partial^\tau a \Vert_{L^\infty(\Omega)} \lesssim_{\tau} 1$ for all multiindices $\tau$.
If we assume that on $\Omega$ we have
$|\partial_1^2 \phi| \sim 1$,
$|\partial^\tau \phi| \lesssim_{\tau} 1$ for all multiindices $\tau$,
and that $|\mathcal{H}_\phi| \sim \varepsilon$ for a bounded,
strictly positive (but possibly small) constant $\varepsilon$, then
\begin{align*}
|\widehat{\mu}(\xi)| \lesssim \varepsilon^{-1/2} (1+|\xi|)^{-1}.
\end{align*}
The claim also holds if $\phi$ and $a$ depend on $\varepsilon$,
assuming that the implicit constants appearing in the lemma can be taken to be independent of $\varepsilon$.
\end{lemma}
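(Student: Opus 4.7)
The plan is to apply a classical stationary phase and van der Corput argument to the Fourier integral
$$\widehat{\mu}(\xi) = \int e^{-i(\xi_1 x_1 + \xi_2 x_2 + \xi_3\phi(x))}\, a(x)\, \mx,$$
exploiting the nondegeneracy $|\partial_1^2 \phi| \sim 1$ to reduce the two-dimensional oscillatory integral to a one-dimensional one in $x_2$ whose phase has second derivative of size $\varepsilon$.

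First I would reduce to $|\xi| \gg 1$, since for $|\xi| \lesssim 1$ one trivially has $|\widehat{\mu}(\xi)| \leq \Vert a\Vert_{L^1} \lesssim 1 \lesssim \varepsilon^{-1/2}$, and the right-hand side is $\gtrsim \varepsilon^{-1/2}(1+|\xi|)^{-1}$. I would then split according to the size of $\xi_3$. If $|\xi_3| \leq c\,|(\xi_1,\xi_2)|$ for a small constant $c$ depending only on $\Vert \nabla \phi\Vert_\infty \lesssim 1$, then the horizontal gradient of the phase satisfies $|\nabla_x(\xi_1 x_1 + \xi_2 x_2 + \xi_3 \phi)| \gtrsim |\xi|$ throughout the support of $a$. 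Repeated integration by parts with the standard transport operator $L f = |\nabla_x \Phi|^{-2}\, \nabla_x\Phi \cdot \nabla_x f$ then gives $|\widehat{\mu}(\xi)| \lesssim_N (1+|\xi|)^{-N}$ for any $N$, which is far stronger than needed.

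In the complementary regime $|\xi_3| \sim |\xi|$, factor out $\xi_3$ and set $F(x) \coloneqq \phi(x) + (\xi_1/\xi_3) x_1 + (\xi_2/\xi_3) x_2$. Since $|\partial_1^2 F| = |\partial_1^2 \phi| \sim 1$ on $\Omega$, the implicit function theorem produces a smooth critical point $x_1^*(x_2)$ for $x_1 \mapsto F(x_1,x_2)$ on the subset of $x_2$ where it exists in $\Omega$. Applying one-dimensional stationary phase in $x_1$ to $\int e^{-i\xi_3 F(x_1,x_2)}\, a(x_1,x_2)\, \mathrm{d}x_1$ yields, up to an $O(|\xi_3|^{-3/2})$ error, a contribution of the form $|\xi_3|^{-1/2} b(x_2)\, e^{-i\xi_3 G(x_2)}$, where $b$ is a smooth amplitude of bounded size (because $|\partial_1^2 \phi|^{-1/2} \sim 1$) and $G(x_2) \coloneqq F(x_1^*(x_2),x_2)$. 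A direct calculation using $(x_1^*)'(x_2) = -\partial_1\partial_2\phi/\partial_1^2\phi$ gives
$$G''(x_2) \;=\; \frac{\mathcal{H}_{\phi}(x_1^*(x_2), x_2)}{\partial_1^2 \phi(x_1^*(x_2), x_2)},$$
so that $|G''| \sim \varepsilon$ uniformly. The second-order van der Corput lemma then bounds the remaining $x_2$-integral by $C(|\xi_3|\varepsilon)^{-1/2}$, combining with the prefactor $|\xi_3|^{-1/2}$ to yield $|\xi_3|^{-1}\varepsilon^{-1/2} \sim |\xi|^{-1}\varepsilon^{-1/2}$; the error term contributes $O(|\xi_3|^{-3/2}) = O(|\xi|^{-3/2})$, which is absorbed.

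The main technical obstacle is the region in $x_2$ where the critical point $x_1^*(x_2)$ fails to lie in the support of $a$, or lies near its boundary; strictly speaking the stationary phase asymptotics above only apply where a bona fide interior critical point exists. This is handled by a smooth decomposition using a cutoff $\chi(\partial_1 F(x)/\delta)$ for a fixed $\delta$ independent of $\varepsilon$: on the ``stationary'' piece one runs the argument above, while on the ``nonstationary'' piece $|\partial_1 F| \gtrsim \delta$ and integration by parts in the $x_1$ variable alone yields $O_N(|\xi_3|^{-N})$. All implicit constants depend only on the bounds $\Vert \partial^\tau \phi\Vert_\infty \lesssim_\tau 1$, $\Vert \partial^\tau a\Vert_\infty \lesssim_\tau 1$, $|\partial_1^2 \phi|\sim 1$, and $|\mathcal{H}_\phi|\sim \varepsilon$, so the estimate is uniform in $\varepsilon$ and remains valid under the stated $\varepsilon$-dependent version.
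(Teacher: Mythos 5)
Your proof takes essentially the same approach as the paper: reduce to the regime $|\xi_3|\sim|\xi|$ by integration by parts, apply stationary phase in $x_1$ using $|\partial_1^2\phi|\sim1$, compute that the reduced one-dimensional phase has second derivative $\mathcal{H}_\phi/\partial_1^2\phi\sim\varepsilon$, and conclude with the second-order van der Corput lemma. The only cosmetic difference is that the paper first normalizes $|\partial_1\phi|\sim1$ by a linear change of variables to streamline the frequency case split, whereas you instead insert a smooth cutoff in $\partial_1 F$ to separate the stationary and nonstationary regions in $x$; both are standard and correct.
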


\begin{proof}
By compactness and translating we may assume that $a$ is supported on a small neighbourhood of the origin.
We also assume for simplicity that $|\partial_1 \phi| \sim 1$, which can be achieved by applying a linear transformation to $\mu$.
The Fourier transform of $\mu$ is by definition
\begin{align*}
\widehat{\mu}(\xi) = \int e^{-i \Phi(x, \xi)} a(x) \mathrm{d}x,
\end{align*}
where the phase function is of the form
\begin{align*}
\Phi(x,\xi) = x_1 \xi_1 + x_2 \xi_2 + \phi(x) \xi_3,
\end{align*}
from which one easily sees that unless $|\xi_1| \sim |\xi_3| \gtrsim |\xi_2|$, we have a very fast decay.
Let us denote
\begin{align*}
s_1 = \frac{\xi_1}{\xi_3}, \quad s_2 = \frac{\xi_2}{\xi_3}, \quad \lambda = \xi_3,
\end{align*}
and rewrite the phase as
\begin{align*}
\Phi(x,\xi) = \lambda(s_1 x_1 + s_2 x_2 + \phi(x)),
\end{align*}
where now $|s_1| \sim 1$ and $|s_2| \lesssim 1$.

Now either the $x_1$ derivative of $\Phi$ has no zeros on the domain of integration
(e.g. when $s_1$ and $\partial_1 \phi(0)$ are of the same sign),
in which case we get a fast decay by integrating by parts,
or there is a unique zero $x_1^c = x_1^c(x_2; s_1,s_2)$, depending smoothly on $(x_2; s_1,s_2)$ by the implicit function theorem,
i.e., we have the relation
\begin{align}
\label{lemma_simple_stationary_point}
s_1 + (\partial_1 \phi)(x_1^c,x_2) = 0.
\end{align}
In this case we apply the stationary phase method and get that
\begin{align*}
\widehat{\mu}(\xi)
 = \lambda^{-1/2} \int e^{-i \lambda \Psi(x_2;s_1,s_2)} a(x_2, s_1, s_2; \lambda) \mathrm{d}x_2,
\end{align*}
where $a$ is a smooth function in $(x_2,s_1,s_2)$ and a classical symbol of order $0$ in $\lambda$,
and where $\Psi(x_2; s_1,s_2) \coloneqq s_1 x_1^c + s_2 x_2 + \phi(x_1^c,x_2) = \lambda^{-1} \Phi(x_1^c,x_2,\xi)$.

Taking the $x_2$ derivative of \eqref{lemma_simple_stationary_point} we get that
\begin{align*}
\partial_{x_2} x_1^c (x_2;s_1,s_2) = - \frac{\partial_1 \partial_2 \phi(x_1^c,x_2)}{\partial_1^2 \phi(x_1^c,x_2)},
\end{align*}
and the $x_2$ derivative of the new phase is by \eqref{lemma_simple_stationary_point}:
\begin{align*}
\lambda \partial_{x_2} \Psi(x_2; s_1, s_2)
  &= \lambda(s_1 \partial_{x_2} x_1^c + s_2 + \partial_{x_2} x_1^c \, \partial_1 \phi(x_1^c,x_2) + \partial_2 \phi(x_1^c, x_2)) \\
  &= \lambda(s_2 + \partial_2 \phi(x_1^c, x_2)).
\end{align*}
From this and the expression for $(x_1^c)'$ it follows that
\begin{align*}
\lambda \partial_{x_2}^2 \Psi(x_2; s_1, s_2)
  = \lambda \frac{\mathcal{H}_\phi (x_1^c,x_2)}{\partial_1^2 \phi(x_1^c,x_2)} \sim \lambda \varepsilon.
\end{align*}
Thus, we may apply the van der Corput lemma, which then delivers the claim of the lemma.
\end{proof}

The following lemma for obtaining mixed norm Fourier restriction estimates
goes back essentially to Ginibre and Velo \cite{GV92} (see also \cite{KT98}).
\begin{lemma}
\label{lemma_simple_estimates}
Assume that we are given a bounded open set $\Omega$ and functions
$\Phi \in C^\infty(\Omega;\R^2)$, $\phi \in C^\infty(\Omega; \R)$, $a \in L^\infty(\Omega)$.
Let us consider the measure
\begin{align*}
\langle \mu, f \rangle \coloneqq \int f(\Phi(x),\phi(x)) a(x) \mathrm{d}x
\end{align*}
and the operator $T : f \mapsto f * \widehat{\mu}$.
If $\Phi$ is injective and its Jacobian is of size $|J_\Phi| \sim A_1$,
then the operator $L^1_{x_3}(\R;L^2_{(x_1,x_2)}(\R^2)) \to L^\infty_{x_3}(\R;L^2_{(x_1,x_2)}(\R^2))$ norm of $T$
is bounded (up to a universal constant) by $A_1^{-1} \Vert a \Vert_{L^\infty}$.
If one has furthermore the estimate
\begin{align*}
|\widehat{\mu}(\xi)| \leq A_2 (1+|\xi_3|)^{-1},
\end{align*}
then for any $\sigma \in [0,1/2)$ and $(1/p_1',1/p_3') = (1/2-\sigma, \sigma)$
the operator $L^{p_3}_{x_3}(\R;L^{p_1}_{(x_1,x_2)}(\R^2)) \to L^{p_3'}_{x_3}(\R;L^{p_1'}_{(x_1,x_2)}(\R^2))$ norm of $T$
is bounded (up to a constant depending on $\sigma$) by $(A_1^{-1} \Vert a \Vert_{L^\infty})^{1-2\sigma} A_2^{2\sigma}$.
\end{lemma}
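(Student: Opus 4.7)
The plan is to decompose $T$ into a family of two-dimensional convolutions in $(x_1,x_2)$ parameterised by a time variable, bound each slice on Lebesgue spaces in $(x_1,x_2)$ using Plancherel and the pointwise decay hypothesis, and then close the mixed-norm estimate by a one-dimensional Hardy--Littlewood--Sobolev inequality in $x_3$. Concretely, I set $K_s(x_1,x_2)\coloneqq \widehat{\mu}(x_1,x_2,s)$ and let $T_s$ denote convolution on $\R^2$ with kernel $K_s$; unpacking the three-dimensional convolution $f*\widehat\mu$ in the first two variables yields
\[
Tf(x_1,x_2,x_3)=\int_\R \bigl(T_s f(\cdot,\cdot,x_3-s)\bigr)(x_1,x_2)\,ds.
\]

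Next I would bound $\|T_s\|_{L^{p_1}\to L^{p_1'}}$ for each $s$ and interpolate. The $L^1\to L^\infty$ norm equals $\|K_s\|_\infty$, so the decay hypothesis on $\widehat{\mu}$ gives $\|T_s\|_{L^1\to L^\infty}\le A_2(1+|s|)^{-1}$. The $L^2\to L^2$ norm equals the sup-norm of the symbol $\widehat{K_s}$, which I would compute by taking the partial Fourier transform of $\widehat\mu$ in the first two arguments; integrating out the two resulting Dirac masses via the injectivity of $\Phi$ identifies $\widehat{K_s}(\xi_1,\xi_2)$ as $(2\pi)^2\,(a\,e^{-is\phi}/|J_\Phi|)\circ \Phi^{-1}(-\xi_1,-\xi_2)$ on the image $\Phi(\Omega)$ and zero off it. The hypothesis $|J_\Phi|\sim A_1$ then yields $\|T_s\|_{L^2\to L^2}\lesssim A_1^{-1}\|a\|_\infty$. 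Riesz--Thorin at $\theta=2\sigma$ (matching $1/p_1'=1/2-\sigma$) produces
\[
\|T_s\|_{L^{p_1}\to L^{p_1'}}\lesssim (A_1^{-1}\|a\|_\infty)^{1-2\sigma}\,A_2^{2\sigma}\,(1+|s|)^{-2\sigma}.
\]

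Finally I would close the mixed-norm bound by duality: for $g\in L^{p_3}_{x_3}(L^{p_1}_{(x_1,x_2)})$, the decomposition above together with H\"older's inequality in $(x_1,x_2)$ reduces matters to bounding
\[
\int\!\!\int (1+|x_3-t|)^{-2\sigma}\,F(t)\,G(x_3)\,dt\,dx_3,
\]
where $F(t)\coloneqq\|f(\cdot,\cdot,t)\|_{L^{p_1}}$ and $G(x_3)\coloneqq\|g(\cdot,\cdot,x_3)\|_{L^{p_1}}$. The one-dimensional Hardy--Littlewood--Sobolev inequality controls this by $\|F\|_{L^{p_3}}\|G\|_{L^{p_3}}$ exactly when $2/p_3+2\sigma=2$, which together with H\"older conjugacy $1/p_3+1/p_3'=1$ rearranges to $1/p_3'=\sigma$; this is precisely the hypothesis, and it forces $\sigma<1/2$ for HLS to apply. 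The first assertion of the lemma corresponds to the endpoint $\sigma=0$, where HLS degenerates to plain integration in $t$ (equivalently, Minkowski in $x_3$), and only the $L^2\to L^2$ slice bound---hence only the injectivity of $\Phi$---is needed. The one delicate point to verify is the pointwise identification of the symbol $\widehat{K_s}$ through the change of variables $\Phi^{-1}$, where the injectivity assumption enters essentially; the remainder is standard interpolation plus HLS.
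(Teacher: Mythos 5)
Your proposal is correct and follows essentially the same route as the paper's proof: you slice the convolution in the third variable, bound each slice $T_s$ on $L^2(\R^2)$ via the explicit symbol $\widehat{K_s}=(2\pi)^2\,(a\,e^{-is\phi}/|J_\Phi|)\circ\Phi^{-1}(-\cdot)$ and on $L^1\to L^\infty$ via the decay hypothesis, interpolate, and then close with a one-dimensional fractional-integration estimate in $x_3$ (the paper phrases this as weak Young applied to the kernel $(1+|\xi_3|)^{-2\sigma}$, which is interchangeable with your HLS step). The only cosmetic difference is that the paper proves the first assertion directly via the $L^\infty$ bound on the partial (inverse) Fourier transform $\FT'\widehat\mu$, whereas you derive it as the $\sigma=0$ endpoint of the same slicing argument using Minkowski; these are the same computation.
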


\begin{proof}
For functions on $\R^3$ let us denote by $\FT'$ the inverse Fourier transform in the first two variables. 
Then it suffices for the first claim to prove that
the $L^\infty$ norm of $\FT' \widehat{\mu}$ is bounded by $A_1^{-1} \Vert a \Vert_{L^\infty}$.
But this follows immediately since $\FT' \widehat{\mu}$ is equal by Fourier inversion
to the Fourier transform of $\mu$ in the third coordinate,
which is easily seen to be the function (up to a universal constant)
\begin{align*}
(x,\xi_3) \mapsto e^{-i \xi_3 \phi \circ \Phi^{-1}(x)} \, a \circ \Phi^{-1}(x) |J_\Phi(x)|^{-1}.
\end{align*}

For the second claim we introduce the operator $T_{\xi_3}g \coloneqq g * \widehat{\mu}(\cdot,\xi_3)$
defined for functions $g$ on $\R^2$ and a fixed $\xi_3 \in \R$.
Then the $L^1(\R^2) \to L^\infty(\R^2)$ norm of $T_{\xi_3}$ is bounded by $A_2 (1+|\xi_3|)^{-1}$,
and the $L^2(\R^2) \to L^2(\R^2)$ norm is bounded up to a universal constant by $A_1^{-1} \Vert a \Vert_{L^\infty}$.
Interpolating one gets that the $L^{p_1}(\R^2) \to L^{p_1'}(\R^2)$ norm is bounded by
\begin{align*}
(A_1^{-1} \Vert a \Vert_{L^\infty})^{1-2\sigma} A_2^{2\sigma} (1+|\xi_3|)^{-2\sigma}
\end{align*}
for $p_1'=(1/2 - \sigma)$ and $\sigma \in [0,1/2]$.
If one now writes a function $f$ on $\R^3$ as $f(\xi_1,\xi_2,\xi_3) = f(\xi', \xi_3) = f_{\xi_3}(\xi')$, then
\begin{align*}
T f (\xi',\xi_3)
   = \int (f_{\eta_3-\xi_3} * \widehat{\mu}(\cdot,\eta_3)) (\xi') \mathrm{d}\eta_3
   = \int T_{\eta_3} f_{\eta_3-\xi_3} \mathrm{d}\eta_3,
\end{align*}
and so the claim follows by the (weak) Young inequality for $\sigma < 1/2$.
\end{proof}


\subsection{Normal form (i)}
\label{subsection_strichartz_mitigating_i}

In this case the local form of the phase is
\begin{align*}
\phi_{\text{loc}}(x) = x_2^k r(x),
\end{align*}
where $r(0) \neq 0$ and the Hessian determinant vanishes of order $2k+k_0-2$ for some $k_0 \geq 0$,
i.e., it has the normal form
\begin{align*}
\mathcal{H}_{\phi_{\text{loc}}}(x) = x_2^{2k+k_0-2} q(x)
\end{align*}
for some smooth function $q$ satisfying $q(0) \neq 0$.

We begin by a dyadic decomposition $\nu = \sum_{j \gg 1} \nu_j$ in $x_2$
followed by scaling $x_2 \mapsto 2^{-j} x_2$.
Namely, for a $j \gg 1$ we define
\begin{align*}
\langle \nu_j, f \rangle = \int f(x,\phi_{\text{loc}}(x)) \, a(x) \, \chi_1(2^j x_2) \, |\mathcal{H}_{\phi_{\text{loc}}}(x)|^{\sigma} \, \mathrm{d}x,
\end{align*}
where $\chi_1(x_2)$ is supported where $|x_2| \sim 1$ and is such that $\sum_{j \in \Z} \chi_1(x_2) = 1$.
Thus, by a Littlewood-Paley argument it suffices to prove
\begin{align*}
\Vert \widehat{f} \Vert^2_{L^2(\mathrm{d}\nu_j)} \lesssim \Vert f\Vert^2_{L^{\mathfrak{p}_3}_{x_3}(L^{\mathfrak{p}_1}_{(x_1,x_2)})},
\end{align*}
with the implicit constant independent of $j$.
Rescaling, this is equivalent to
\begin{align}
\label{section_strichartz_mitigating_i_FRP}
\Vert \widehat{f} \Vert^2_{L^2(\mathrm{d}\tilde{\nu}_j)} \lesssim 2^{\sigma j k_0} \Vert f\Vert^2_{L^{\mathfrak{p}_3}_{x_3}(L^{\mathfrak{p}_1}_{(x_1,x_2)})},
\end{align}
where now
\begin{align*}
\langle \tilde{\nu}_j, f \rangle = \int f(x,\tilde{\phi}(x,2^{-j})) \, a(x,2^{-j}) \mathrm{d}x.
\end{align*}
The amplitude $a(x, 2^{-j})$ is now supported so that $|x_1| \ll 1$ and $|x_2| \sim 1$,
and it is $C^{\infty}$ having derivatives uniformly bounded.
The phase is
\begin{align*}
\tilde{\phi}(x,2^{-j})
   &= 2^{jk} \phi_{\text{loc}}(x_1, 2^{-j}x_2) \\
   &= x_2^k r(x_1, 2^{-j} x_2).
\end{align*}
From this we have $|\partial_2 \tilde{\phi}| \sim 1 \sim |\partial_2^2 \tilde{\phi}|$ and
one easily gets by using the definition of the Hessian determinant that
\begin{align*}
\mathcal{H}_{\tilde{\phi}}(x, 2^{-j})
  &= 2^{j(2k-2)} \mathcal{H}_{\phi_{\text{loc}}}(x_1, 2^{-j} x_2) \\
  &= 2^{-jk_0} x_2^{2k+k_0-2} q(x_1, 2^{-j}x_2).
\end{align*}
Thus $|\mathcal{H}_{\tilde{\phi}}(x, 2^{-j})| \sim 2^{-j k_0}$, from which
the estimate \eqref{section_strichartz_mitigating_i_FRP} follows by
an application of Lemma \ref{lemma_simple_stationary} and subsequently Lemma \ref{lemma_simple_estimates}.


\subsection{Preliminary rescaling for cases (ii)-(vi)}
\label{subsection_strichartz_mitigating_scaling}

In normal form cases (ii)-(vi) the principal face of $\mathcal{N}(\phi_{\text{loc}})$ is compact
(for the definition of the Newton polyhedron $\mathcal{N}(\phi_{\text{loc}})$
of a smooth phase function $\phi_{\text{loc}}$ see for example \cite{IM16} or \cite{Pa19}),
and so we use the scaling associated to it:
\begin{align*}
\delta_r^{\kappa} (x) = (r^{\kappa_1} x_1, r^{\kappa_2} x_2),
\end{align*}
where in cases (ii)-(v) we have $\kappa = (1/2,1/k)$ and in case (vi) we have $\kappa = (1/(2k),1/k)$.
In particular, for $j \gg 1$ we define
\begin{align*}
\langle \nu_j, f \rangle = \int f(x,\phi_{\text{loc}}(x)) \, a(x) \, \eta(\delta_{2^j}^\kappa x) \, |\mathcal{H}_{\phi_{\text{loc}}}(x)|^{\sigma} \, \mathrm{d}x,
\end{align*}
where $\eta$ is supported on an annulus and is such that $\sum_{j \in \Z} \eta(\delta_{2^j}^\kappa x) = 1$.
By using Littlewood-Paley theory we get that it is sufficient to prove
\begin{align*}
\Vert \widehat{f} \Vert^2_{L^2(\mathrm{d}\nu_j)} \lesssim \Vert f\Vert^2_{L^{\mathfrak{p}_3}_{x_3}(L^{\mathfrak{p}_1}_{(x_1,x_2)})}.
\end{align*}
Rescaling, the above estimate is equivalent to
\begin{align}
\label{section_strichartz_mitigating_scaling_FRP}
\Vert \widehat{f} \Vert^2_{L^2(\mathrm{d}\tilde{\nu}_j)} \lesssim \Vert f\Vert^2_{L^{\mathfrak{p}_3}_{x_3}(L^{\mathfrak{p}_1}_{(x_1,x_2)})},
\end{align}
where
\begin{align}
\label{section_strichartz_mitigating_scaling_measure}
\langle \tilde{\nu}_j, f \rangle = \int f(x,\tilde{\phi}(x,\delta)) \, |\mathcal{H}_{\tilde{\phi}}(x, \delta)|^{\sigma} \, a(x, \delta) \mathrm{d}x.
\end{align}
Here the amplitude $a(x, \delta)$ is supported on a fixed annulus around the origin,
\begin{align}
\label{section_strichartz_mitigating_delta_def_ii_iii_iv_v}
\delta = (\delta_0, \delta_1, \delta_2) \coloneqq (2^{-j\frac{k-1}{k}}, 2^{-j/2}, 2^{-j/k})
\end{align}
in cases (ii)-(v), and
\begin{align*}
\delta = (\delta_1, \delta_2) \coloneqq (2^{-j/(2k)}, 2^{-j/k})
\end{align*}
in case (vi).
The phase which one obtains in \eqref{section_strichartz_mitigating_scaling_measure} is
\begin{align*}
\tilde{\phi}(x,\delta)
  \coloneqq 2^{j} \phi_{\text{loc}}(\delta_1 x_1, \delta_2 x_2).
\end{align*}
The quantity $\delta_0$ will be appear only later when we use the explicit normal forms.
From the above phase form it follows that
\begin{align*}
\mathcal{H}_{\tilde{\phi}}(x, \delta) = 2^{j(k-2)/k} \mathcal{H}_{\phi_{\text{loc}}}(\delta_1 x_1, \delta_2 x_2)
\end{align*}
in cases (ii)-(v), and
\begin{align*}
\mathcal{H}_{\tilde{\phi}}(x, \delta) = 2^{j(2k-3)/k} \mathcal{H}_{\phi_{\text{loc}}}(\delta_1 x_1, \delta_2 x_2)
\end{align*}
in case (vi).


\subsection{Normal forms (ii) and (iii)}
\label{subsection_strichartz_mitigating_ii}

Using the normal forms for $\phi_{\text{loc}}$ one gets in these cases
\begin{align*}
\tilde{\phi}(x,\delta) &= x_1^2 r_1(\delta_1 x_1, \delta_2 x_2) + x_2^k r_2(\delta_1 x_1, \delta_2 x_2), \\
\mathcal{H}_{\tilde{\phi}}(x, \delta) &= x_2^{k-2} q(\delta_1 x_1, \delta_2 x_2),
\end{align*}
where $r_1(0), r_2(0), q(0) \neq 0$, and $k \geq 3$.
Hence, for the part where $|x_2| \gtrsim 1$ in \eqref{section_strichartz_mitigating_scaling_measure}
the Hessian is nondegenerate,
and so we may localize to $|x_1| \sim 1$ and $|x_2| \ll 1$,
and subsequently perform a dyadic decomposition in the $x_2$ coordinate, i.e., we define
\begin{align*}
\langle \nu_l, f \rangle
  &\coloneqq \int f(x,\tilde{\phi}(x,\delta)) \, |x_2|^{\sigma(k-2)} \, \chi_1(2^l x_2) \, a(x, \delta) \mathrm{d}x\\
  &= 2^{-l- l \sigma (k-2)} \int f(x_1,2^{-l}x_2,\tilde{\phi}(x_1,2^{-l}x_2,\delta)) \, a(x, \delta, 2^{-l}) \mathrm{d}x,
\end{align*}
where now the amplitude is supported in a domain where $|x_1| \sim 1 \sim |x_2|$,
and has uniformly bounded $C^N$ norm for any $N$.
Applying the Littlewood-Paley theorem again and rescaling, it is sufficient for us to prove
\begin{align}
\label{section_strichartz_mitigating_ii_scaling_FRP_final}
\Vert \widehat{f} \Vert^2_{L^2(\mathrm{d} \tilde{\nu}_{j,l})}
   \lesssim 2^{kl\sigma} \Vert f\Vert^2_{L^{\mathfrak{p}_3}_{x_3}(L^{\mathfrak{p}_1}_{(x_1,x_2)})},
\end{align}
where the rescaled measure is
\begin{align*}
\langle \tilde{\nu}_{j,l}, f \rangle = \int f(x,\tilde{\phi}(x_1, 2^{-l} x_2,\delta)) \, a(x, \delta, 2^{-l}) \mathrm{d}x.
\end{align*}
The phase has now the form
\begin{align}
\label{subsection_strichartz_mitigating_ii_scaling_phase}
x_1^2 r_1(\delta_1 x_1, \delta_2 x_2) + 2^{-kl} x_2^k r_2(\delta_1 x_1, 2^{-l} \delta_2 x_2)
\end{align}
on the domain $|x_1| \sim 1$ and $|x_2| \sim 1$, and its Hessian determinant is of size $2^{-kl}$.
By Lemma \ref{lemma_simple_stationary} we have
\begin{align*}
|\widehat{\tilde{\nu}}_{j,l}(\xi)| \lesssim 2^{kl/2} (1+|\xi|)^{-1}.
\end{align*}
And so the estimate \eqref{section_strichartz_mitigating_ii_scaling_FRP_final} follows by Lemma \ref{lemma_simple_estimates}.


\subsection{Normal form (iv)}
\label{subsection_strichartz_mitigating_iv}

In this case we get
\begin{align*}
\tilde{\phi}(x,\delta) &= x_1^2 r_1(\delta_1 x_1) + (x_2 - \delta_0 x_1^2 \psi(\delta_1 x_1))^k r_2(\delta_1 x_1, \delta_2 x_2), \\
\mathcal{H}_{\tilde{\phi}}(x, \delta) &= (x_2 - \delta_0 x_1^2 \psi(\delta_1 x_1))^{k-2} q(\delta_1 x_1, \delta_2 x_2),
\end{align*}
where $r_1(0), r_2(0), q(0), \psi(0) \neq 0$, and $k \geq 3$.
Therefore again, if $|x_2| \gtrsim 1$ the Hessian is nondegenerate
and therefore we may concentrate on $|x_1| \sim 1$ and $|x_2| \ll 1$ in \eqref{section_strichartz_mitigating_scaling_measure}.
We perform a dyadic decomposition,
though this time depending on how close we are to the root of the Hessian determinant, i.e., we define
\begin{align*}
\langle \nu_{l}, f \rangle
  &\coloneqq \int f(x,\tilde{\phi}(x,\delta)) \, |x_2 - \delta_0 x_1^2 \psi(\delta_1 x_1)|^{\sigma(k-2)} \,
             \chi_1(2^l (x_2 - \delta_0 x_1^2 \psi(\delta_1 x_1))) \, a(x, \delta) \mathrm{d}x.
\end{align*}
Next, after changing coordinates from $x_2$ to $x_2 + \delta_0 x_1^2 \psi(\delta_1 x_1)$ we may write
\begin{align}
\label{section_strichartz_mitigating_iv_measure_l}
\langle \nu_{l}, f \rangle
  = \int f(x_1, x_2 + \delta_0 x_1^2 \psi(\delta_1 x_1),\phi_{1}(x,\delta)) \, |x_2|^{\sigma(k-2)} \, \chi_1(2^l x_2) \, a_1(x, \delta) \, \mathrm{d}x,
\end{align}
where
\begin{align*}
\phi_{1}(x,\delta)
  &= x_1^2 r_1(\delta_1 x_1) + x_2^k r_2(\delta_1 x_1, \delta_2 x_2 + \delta_0 \delta_2 x_1^2 \psi(\delta_1 x_1, \delta_2 x_2)) \\
  &= x_1^2 r_1(\delta_1 x_1) + x_2^k r_2(\delta_1 x_1, \delta_2 x_2 + (\delta_1 x_1)^2 \psi(\delta_1 x_1, \delta_2 x_2)) \\
  &= x_1^2 r_1(\delta_1 x_1) + x_2^k \tilde{r}_2(\delta_1 x_1, \delta_2 x_2).
\end{align*}
The function $\tilde{r}_2$ is a smooth and nonzero at the origin.
Finally, we rescale in $x_2$ as $x_2 \mapsto 2^{-l} x_2$ and may write
\begin{align}
\label{section_strichartz_mitigating_iv_measure_l_rewritten}
\langle \nu_{l}, f \rangle
  = 2^{-l - l\sigma(k-2)} \int f(x_1, 2^{-l} x_2 + \delta_0 x_1^2 \psi(\delta_1 x_1),\phi_{j, l}(x,\delta,2^{-l}))
     \, \chi_1(x_1) \, \chi_1(x_2) \, a(x, \delta, 2^{-l}) \, \mathrm{d}x,
\end{align}
where the amplitude is a smooth function and the phase is
\begin{align*}
\phi_{j, l}(x,\delta)
  &= x_1^2 r_1(\delta_1 x_1) + 2^{-kl} x_2^k \tilde{r}_2(\delta_1 x_1, 2^{-l} \delta_2 x_2).
\end{align*}

\medskip

In order to obtain the estimate \eqref{section_strichartz_mitigating_scaling_FRP}
we shall need essentially a variant of Lemma \ref{lemma_simple_estimates}.
Namely, we shall consider the analytic family of operators $T_\zeta$
defined by convolution against the Fourier transform of the measure
\begin{align}
\label{section_strichartz_mitigating_iv_complex_measure}
\mu_\zeta \coloneqq \sum_{2^l \gg 1} 2^{l\sigma(k-2)} 2^{-l\zeta (k-2)} \nu_{l},
\end{align}
where $\zeta$ has real part between $0$ and $1/2$, and in particular, for a fixed $\xi_3 \in \R^3$,
we shall consider the operator $T_\zeta^{\xi_3} : f \mapsto f * \widehat{\mu}_\zeta(\cdot, \xi_3)$.
Note that we are interested in $\mu_{\sigma}$ since this is precisely the sum of measures $\nu_{l}$.

When the real part of $\zeta$ is $0$ (i.e., $\zeta = it$, $t \in \R$)
one considers the $L^2(\R^2) \to L^2(\R^2)$ estimate for which we use the equation \eqref{section_strichartz_mitigating_iv_measure_l}.
In \eqref{section_strichartz_mitigating_iv_measure_l} we see that the amplitude is of size $2^{-l \sigma (k-2)}$,
which is precisely what we need in \eqref{section_strichartz_mitigating_iv_complex_measure}.
Since the supports are disjoint when varying $l$, we get by a similar argumentation as in Lemma \ref{lemma_simple_estimates}
that the operator $L^2(\R^2) \to L^2(\R^2)$ norm of $T_{it}^{\xi_3}$ is $\lesssim 1$ (uniform in $\xi_3$ and $t$).

When the real part of $\zeta$ is $1/2$ we need to prove
\begin{align}
\label{section_strichartz_mitigating_iv_FT_estimate}
|\widehat{\mu}_{1/2+it} (\xi)| \lesssim (1+|\xi_3|)^{-1}
\end{align}
with implicit constant independent of $t$ and $\xi_3$,
since this would give us that
the operator norm of $T_{1/2+it}^{\xi_3}$ for mapping $L^1(\R^2) \to L^\infty(\R^2)$ is bounded by $(1+|\xi_3|)^{-1}$.

Thus, under the assumption that we have the estimate \eqref{section_strichartz_mitigating_iv_FT_estimate}
we may apply complex interpolation for each fixed $\xi_3$ to the analytic family of operators $T_\zeta^{\xi_3}$
and obtain that the operator norm of $T_{\sigma}^{\xi_3}$
between spaces $L^{\mathfrak{p}_1}(\R^2) \to L^{\mathfrak{p}_1'}(\R^2)$ is $\lesssim (1+|\xi_3|)^{-2\sigma}$,
and so in the same way as in the proof of Lemma \ref{lemma_simple_estimates}
the (weak) Young inequality in the $x_3$ direction implies \eqref{section_strichartz_mitigating_scaling_FRP}.

In proving \eqref{section_strichartz_mitigating_iv_FT_estimate} it suffices to show that
\begin{align*}
\sum_{2^l \gg 1} 2^{-l(1/2-\sigma)(k-2)} |\widehat{\nu}_{l} (\xi)| \lesssim (1+|\xi_3|)^{-1}
\end{align*}
for all $\xi \in \R^3$.
By \eqref{section_strichartz_mitigating_iv_measure_l_rewritten} the Fourier transform of a summand is
\begin{align*}
2^{-l(1/2-\sigma)(k-2)} \widehat{\nu}_{l} (\xi)
   = 2^{-kl/2} \int e^{-i \Phi (x, \xi, \delta, 2^{-l})} \, \chi_1(x_1) \, \chi_1(x_2) \, a(x, \delta, 2^{-l}) \mx,
\end{align*}
where the phase function is
\begin{align*}
\Phi (x, \xi, \delta, 2^{-l})
  &\coloneqq  \xi_1 x_1 + \xi_2 \delta_0 x_1^2 \psi(\delta_1 x_1) + \xi_3 x_1^2 r_1(\delta_1 x_1) \\
  &\quad + 2^{-l} \xi_2 x_2 + 2^{-kl} \xi_3 x_2^k \tilde{r}_2(\delta_1 x_1, 2^{-l} \delta_2 x_2).
\end{align*}

We see that when either $|\xi_1| \gg \max\{|\xi_2|, |\xi_3|\}$ or $|\xi_3| \gg \max\{|\xi_1|, |\xi_2|\}$
we can use integration by parts in the $x_1$ variable and get a very fast decay.
This is also the case when $|\xi_1| \sim |\xi_2|$ are much greater than $|\xi_3|$,
or when $|\xi_2| \sim |\xi_3|$ are much greater than $|\xi_1|$.
If we have $|\xi_2| \gtrsim |\xi_3|$, then we may use integration by parts in $x_2$ and get
\begin{align*}
|2^{-l(1/2-\sigma)(k-2)} \widehat{\nu}_{l} (\xi)|
   \lesssim 2^{-kl/2} (1+2^{-l}|\xi_2|)^{-1}
   \lesssim 2^{-kl/2} (1+2^{-l}|\xi_3|)^{-1},
\end{align*}
from which \eqref{section_strichartz_mitigating_iv_FT_estimate} follows since $k \geq 3$.
We are thus left with the case when $|\xi_1| \sim |\xi_3| \gg |\xi_2|$.

\medskip
\noindent
{\bf Case 1. $2^{-kl} |\xi_3| \lesssim 1$.}
Here we use the van der Corput lemma in $x_1$ only and get
\begin{align*}
|2^{-l(1/2-\sigma)(k-2)} \widehat{\nu}_{l} (\xi)| \lesssim 2^{-kl/2} |\xi_3|^{-1/2}.
\end{align*}
Summation in $l$ then gives precisely \eqref{section_strichartz_mitigating_iv_FT_estimate}.

\medskip
\noindent
{\bf Case 2. $2^{-l} |\xi_2| \nsim 2^{-kl} |\xi_3|$ and $2^{-kl} |\xi_3| \gg 1$.}
We may use in this case integration by parts in $x_2$ and then
the van der Corput lemma in $x_1$ and get
\begin{align*}
|2^{-l(1/2-\sigma)(k-2)} \widehat{\nu}_{l} (\xi)|
   &\lesssim 2^{-kl/2} |\xi_3|^{-1/2} \, (2^{-kl} |\xi_3|)^{-1} \\
   &\lesssim 2^{kl/2} |\xi_3|^{-3/2}.
\end{align*}
We may now sum in $l$.

\medskip
\noindent
{\bf Case 3. $2^{-l} |\xi_2| \sim 2^{-kl} |\xi_3| \gg 1$.}
Here we have by iterative stationary phase (first in $x_2$ and then in $x_1$) that
\begin{align*}
|2^{-l(1/2-\sigma)(k-2)} \widehat{\nu}_{l} (\xi)|
  \lesssim 2^{-kl/2} |\xi_3|^{-1/2} \, (2^{-kl} |\xi_3|)^{-1/2}
  = |\xi_3|^{-1}.
\end{align*}
Here we note that $2^{l(k-1)} \sim |\xi_3| \, |\xi_2|^{-1}$, and so we sum only over finitely many (i.e., $\mathcal{O}(1)$) $l$ for each fixed $\xi$.
Thus, here we also have the estimate \eqref{section_strichartz_mitigating_iv_FT_estimate}.


\subsection{Normal form (v)}
\label{subsection_strichartz_mitigating_v}

Recall that here
\begin{align*}
\phi_{\text{loc}}(x) &= x_1^2 r_1(x) + (x_2- x_1^2 \psi(x_1))^{k} r_2(x), \\
\mathcal{H}_{\phi_{\text{loc}}}(x) &= (x_2- x_1^2 \psi(x_1))^{k-2} q(x),
\end{align*}
where we know that $k \geq 3$, $r_1(0), r_2(0), q(0), \psi(0) \neq 0$.
Furthermore, recall that this corresponded to the $w$ coordinates when deriving the normal forms,
and we have shown that we additionally have in this case:
\begin{align*}
\partial_2^{\tau_2} r_1(0) \neq 0, \qquad \text{for all} \,\,\, \tau_2 \in \{0,1,\ldots,k-1\}.
\end{align*}
In fact, one has the relationship:
\begin{align*}
c \, \tau_2 \partial_2^{\tau_2-1} r_1(0) = \partial_2^{\tau_2} r_1(0), \qquad \text{for all} \,\,\, \tau_2 \in \{1,\ldots,k-1\},
\end{align*}
where $c$ is some fixed nonzero constant (see Subsection \ref{subsection_forms_derivations_w}).
This implies for example the relation:
\begin{align}
\label{section_strichartz_mitigating_v_coef_relation}
r_1(0) \, \partial_2^2 r_1(0) - 2 (\partial_2 r_1)^2(0) = 0.
\end{align}

From the above normal form we have
\begin{align*}
\tilde{\phi}(x,\delta) &= x_1^2 r_1(\delta_1 x_1, \delta_2 x_2) + (x_2 - \delta_0 x_1^2 \psi(\delta_1 x_1))^k r_2(\delta_1 x_1, \delta_2 x_2), \\
\mathcal{H}_{\tilde{\phi}}(x, \delta) &= (x_2 - \delta_0 x_1^2 \psi(\delta_1 x_1))^{k-2} q(\delta_1 x_1, \delta_2 x_2).
\end{align*}
We may as usual localize to $|x_1| \sim 1$ and $|x_2| \ll 1$.
We shall abuse the notation a bit and denote this localized measure again by $\tilde{\nu}_j$.
After changing coordinates from $x_2$ to $x_2 + \delta_0 x_1^2 \psi(\delta_1 x_1)$ we may write
\begin{align*}
\langle \tilde{\nu}_j, f \rangle
  = \int f(x_1, x_2 + \delta_0 x_1^2 \psi(\delta_1 x_1),\phi_1(x,\delta))
  \, |x_2|^{\sigma(k-2)} \, a_1(x,\delta) \, \chi_1(x_1) \, \chi_0(x_2) \, \mathrm{d}x
\end{align*}
with the phase being
\begin{align*}
\phi_1(x,\delta)
  &= x_1^2 \tilde{r}_1(\delta_1 x_1, \delta_2 x_2) + x_2^k \tilde{r}_2(\delta_1 x_1, \delta_2 x_2),
\end{align*}
where $\tilde{r}_1, \tilde{r}_2$ are smooth functions, nonzero at the origin,
and satisfy the same properties and relations as $r_1$ and $r_2$ mentioned at the beginning of this subsection.
As in the case (iv), we also decompose the measure $\tilde{\nu}_j$ as $\tilde{\nu}_j = \sum_l \nu_l$, where
\begin{align*}
\langle \nu_{l}, f \rangle
  = \int f(x_1,x_2 + \delta_0 x_1^2 \psi(\delta_1 x_1),\phi_1(x,\delta))
  \, |x_2|^{\sigma(k-2)} \, a_1(x,\delta) \, \chi_1(x_1)  \, \chi_1(2^l x_2)\,  \mathrm{d}x.
\end{align*}

Next, we shall be interested in the rescaled phase:
\begin{align*}
\phi_l(x,\delta,2^{-l}) = \phi_1(x_1, 2^{-l}x_2, \delta) = \tilde{\phi}(x_1, 2^{-l} x_2 + \delta_0 x_1^2 \psi(\delta_1 x_1), \delta).
\end{align*}
Now we need a relation between the Hessian determinant of $\phi_l$ and the Hessian determinant of $\tilde{\phi}$.
For this let us denote for simplicity
\begin{align*}
\varphi(x_1,\delta_1) \coloneqq \delta_1^2 x_1^2 \psi(\delta_1 x_1).
\end{align*}
The reason why we have not included the factor $\delta_2^{-1}$ will be clear later
(recall from \eqref{section_strichartz_mitigating_delta_def_ii_iii_iv_v} that $\delta_0 = \delta_1^2 \delta_2^{-1}$).
A direct calculation shows then
\begin{align}
\label{section_strichartz_mitigating_v_Hessian_nonlinear}
\mathcal{H}_{\phi_l} = 2^{-2l} \mathcal{H}_{\tilde{\phi}} + \delta_2^{-1} 2^{l} \partial_1^2 \varphi \partial_2 \phi_l \partial_2^2 \phi_l,
\end{align}
and due to our localization we have $|\mathcal{H}_{\tilde{\phi}}| \sim 2^{-l(k-2)}$.

\medskip

We use the same complex interpolation idea as in (iv) according to which it suffices to prove
\begin{align*}
\sum_{2^l \gg 1} 2^{-l(1/2-\sigma)(k-2)} |\widehat{\nu}_{l} (\xi)| \lesssim (1+|\xi_3|)^{-1},
\end{align*}
where after rescaling $x_2 \mapsto 2^{-l}x_2$ we have
\begin{align*}
2^{-l(1/2-\sigma)(k-2)} \widehat{\nu}_{l} (\xi)
   = 2^{-kl/2} \int e^{i \Phi_0(x, \xi, \delta, 2^{-l})} a(x,\delta,2^{-l}) \mathrm{d}x,
\end{align*}
where the phase function for the Fourier transform of $\nu_l$ is
\begin{align*}
\Phi_0 (x, \xi, \delta, 2^{-l})
  &\coloneqq  \xi_1 x_1 + \xi_2 \delta_0 x_1^2 \psi(\delta_1 x_1) + \xi_3 x_1^2 \tilde{r}_1(\delta_1 x_1, 2^{-l} \delta_2 x_2) \\
  &\quad + \xi_2 2^{-l} x_2 + \xi_3 2^{-kl} x_2^k \tilde{r}_2(\delta_1 x_1, 2^{-l} \delta_2 x_2) \\
  &= \xi_1 x_1 + \xi_2 \delta_2^{-1} \varphi(x_1, \delta_1) + \xi_2 2^{-l} x_2 + \xi_3 \phi_l(x, \delta, 2^{-l}).
\end{align*}
The amplitude localizes the integration to $|x_1| \sim 1 \sim |x_2|$.

\medskip

Using the same argumentation as in the case (iv) we can reduce ourselves to the case
when $|\xi_1|\sim|\xi_3|$, $|\xi_2| \ll |\xi_3|$, and $|\xi_3| 2^{-kl} \gg 1$ are satisfied.

Now let us make some further reductions using the fact that $\partial_2 \tilde{r}_1(0), \partial_2^2 \tilde{r}_1(0) \neq 0$.
The $x_2$ derivative of the phase $\Phi_0$ contains three terms of respective sizes:
$\sim |2^{-l} \delta_2 \xi_3|$, $\sim |2^{-l} \xi_2|$, and $\sim |2^{-kl} \xi_3|$.
If we may integrate by parts in $x_2$ (i.e., if one of the above terms is much larger than the other two),
we can get an admissible estimate and sum in $l$.
If $|2^{-kl} \xi_3|$ is comparable to the larger of the other two terms,
then one easily sees that the second derivative in $x_2$ is necessarily of size $|2^{-kl} \xi_3|$,
and so in this case we get by iterative stationary phase the estimate
\begin{align*}
2^{-l(1/2-\sigma)(k-2)} |\widehat{\nu}_{l} (\xi)| \lesssim (1+|\xi_3|)^{-1}.
\end{align*}
Note that we do not need to sum in $l$ since there are only finitely many $l$ satisfying one of the relations
$|2^{-kl} \xi_3| \sim |2^{-l} \delta_2 \xi_3|$ or $|2^{-kl} \xi_3| \sim |2^{-l} \xi_2|$.

\medskip

We are thus now reduced to the case when
\begin{align*}
|2^{-l} \xi_2| \sim |2^{-l} \delta_2 \xi_3| \gg |2^{-kl} \xi_3|, \qquad |\xi_1| \sim |\xi_3|, \qquad \text{and} \qquad |\xi_3| 2^{-kl} \gg 1.
\end{align*}
At this point we introduce some further notation:
\begin{align*}
\lambda \coloneqq \xi_3, \quad
s_1 \coloneqq \frac{\xi_1}{\xi_3}, \quad
s_2 \coloneqq \frac{\xi_2}{\delta_2 \xi_3}, \quad
\varepsilon \coloneqq 2^{-l} \delta_2,
\end{align*}
and so we have $|s_1| \sim 1 \sim |s_2|$, $\lambda 2^{-kl} \gg 1$, and $\varepsilon \gg 2^{-kl}$.
The phase $\Phi_0$ can now be rewritten as $\lambda \Phi$, where $\Phi$ is
\begin{align*}
\Phi (x, s_1,s_2, \delta_1, \varepsilon, 2^{-kl})
  &= s_1 x_1 + s_2 \delta_1^2 x_1^2 \psi(\delta_1 x_1) + s_2 \varepsilon x_2 + \phi_l(x, \delta, 2^{-l}),
\end{align*}
since we note from the form of $\phi_l$ that $\phi_l$
can also be taken to depend on $(x_1,x_2,\delta_1,\varepsilon,2^{-kl})$.

\medskip

Let us now apply the stationary phase method in $x_1$.
We may rewrite the phase as
\begin{align*}
\Phi (x, s_1,s_2, \delta_1, \varepsilon, 2^{-kl})
  = s_1 x_1 + s_2 \varphi + s_2 \varepsilon x_2 + \phi_l,
\end{align*}
where we recall that $\varphi(x_1, \delta) = \delta_1^2 x_1^2 \psi(\delta_1 x_1)$.
We may assume that there is a stationary point for the $x_1$ derivative
since $|\partial_1^2 \phi_l| \sim 1$ and $|s_1| \sim 1$,
and as otherwise we may use integration by parts.

We denote by $x_1^c = x_1^c(x_2,s_1,s_2,\delta_1,\varepsilon, 2^{-kl})$ the function such that
\begin{align}
\label{subsubsection_stationary_1_critical}
(\partial_1 \Phi) (x_1^c, x_2,s_1,s_2,\delta_1,\varepsilon, 2^{-kl})
  = s_1 + s_2 \partial_1 \varphi + \partial_1 \phi_l
  = 0.
\end{align}
Taking the $x_2$ derivative we get
\begin{align}
\label{subsubsection_stationary_1_critical_derivative}
s_2 (x_1^c)' \partial_1^2 \varphi + (x_1^c)' \partial_1^2 \phi_l + \partial_1 \partial_2 \phi_l = 0.
\end{align}
After applying the stationary phase method in $x_1$ we gain a decay factor of $\lambda^{-1/2}$, i.e.,
we have
\begin{align*}
2^{-l(1/2-\sigma)(k-2)} \widehat{\nu}_{l} (\xi)
   = \lambda^{-1/2} 2^{-kl/2} \int e^{-i \lambda \tilde{\Phi} (x_2, s_1,s_2,\delta_1,\varepsilon, 2^{-kl})} a(x_2,s_1,s_2,\delta,2^{-l}; \lambda) \mathrm{d}x_2,
\end{align*}
where the new phase is
\begin{align*}
\tilde{\Phi} (x_2, s_1,s_2,\delta_1,\varepsilon, 2^{-kl})
  = s_1 x_1^c + s_2 \varphi(x_1^c,\delta_1) + s_2 \varepsilon x_2 + \phi_l(x_1^c,x_2,\delta,2^{-l}),
\end{align*}
and the amplitude $a$ is a classical symbol in $\lambda$ of order $0$.

Taking the $x_2$ derivative of the expression for the new phase $\tilde{\Phi}$
and using the equation \eqref{subsubsection_stationary_1_critical} we get
\begin{align}
\label{subsubsection_stationary_1_phase_first_derivative}
\tilde{\Phi}'
  = s_2 \varepsilon + \partial_2 \phi_l.
\end{align}
Therefore, the second derivative of the new phase is
\begin{align}
\label{subsubsection_stationary_1_phase_second_derivative}
\tilde{\Phi}''
  &= (\partial_2 \phi_l)' \nonumber \\
  &= \partial_2^2 \phi_l + (x_1^c)' \partial_1 \partial_2 \phi_l.
\end{align}
Now using in order
\eqref{subsubsection_stationary_1_critical_derivative},
the definition of $\mathcal{H}_{\phi_l}$
\eqref{section_strichartz_mitigating_v_Hessian_nonlinear},
\eqref{subsubsection_stationary_1_phase_first_derivative},
and \eqref{subsubsection_stationary_1_phase_second_derivative},
we obtain
\begin{align*}
(\partial_1^2 \phi_l) \tilde{\Phi}''
  &= \partial_1^2 \phi_l \, \partial_2^2 \phi_l + \partial_1 \partial_2 \phi_l (- \partial_1 \partial_2 \phi_l - s_2 (x_1^c)' \partial_1^2 \varphi) \\
  &= \mathcal{H}_{\phi_l} - s_2 (x_1^c)' \partial_1^2 \varphi \, \partial_1 \partial_2 \phi_l \\
  &= 2^{-2l} \mathcal{H}_{\tilde{\phi}} + \delta_2^{-1} 2^{l} \partial_1^2 \varphi \partial_2 \phi_l \partial_2^2 \phi_l
     - s_2 (x_1^c)' \partial_1^2 \varphi \, \partial_1 \partial_2 \phi_l \\
  &= 2^{-2l} \mathcal{H}_{\tilde{\phi}} + \varepsilon^{-1} \partial_1^2 \varphi \, \partial_2^2 \phi_l ( \tilde{\Phi}' - \varepsilon s_2)
     - s_2 (x_1^c)' \partial_1^2 \varphi \, \partial_1 \partial_2 \phi_l \\
  &= 2^{-2l} \mathcal{H}_{\tilde{\phi}} - s_2 \partial_1^2 \varphi \, \partial_2^2 \phi_l
     - s_2 (x_1^c)' \partial_1^2 \varphi \, \partial_1 \partial_2 \phi_l + \varepsilon^{-1} \partial_1^2 \varphi \, \partial_2^2 \phi_l \,\tilde{\Phi}' \\
  &= 2^{-2l} \mathcal{H}_{\tilde{\phi}} - s_2 \partial_1^2 \varphi \, \tilde{\Phi}'' + \varepsilon^{-1} \partial_1^2 \varphi \, \partial_2^2 \phi_l \,\tilde{\Phi}'.
\end{align*}
Thus, we get
\begin{align}
\label{subsubsection_stationary_1_phase_ODE}
(s_2 \partial_1^2 \varphi + \partial_1^2 \phi_l) \tilde{\Phi}''
  &= 2^{-2l} \mathcal{H}_{\tilde{\phi}} + \varepsilon^{-1} \partial_1^2 \varphi \, \partial_2^2 \phi_l \,\tilde{\Phi}'.
\end{align}
Note that we have $|\varepsilon^{-1} \partial_1^2 \varphi \, \partial_2^2 \phi_l| \ll \delta_1^2 \ll 1$
and $|s_2 \partial_1^2 \varphi + \partial_1^2 \phi_l| \sim 1$,
and recall that $|2^{-2l} \mathcal{H}_{\tilde{\phi}}| \sim 2^{-kl}$.
We claim that either
$|\tilde{\Phi}'| \lesssim 2^{-kl}$ on the whole domain of integration (i.e., for $|x_2| \sim 1$), or that
$|\tilde{\Phi}'| \gtrsim 2^{-kl}$ on the whole domain of integration.
This can be shown
by using the formula for the solution of a linear first order ODE (considering $\tilde{\Phi}'$ as the unknown),
or by arguing by contradiction.

Let us argue by contradiction in the following way.
Let us assume that there exists a point $|x_2^0| \sim 1$ such that $|\tilde{\Phi}'(x_2^0)| \leq 2^{-kl}$.
Furthermore, let us assume that there exists a point $|x_2^1| \sim 1$
where one has $|\tilde{\Phi}'| = C_1 2^{-kl}$ for some sufficiently large $C_1$,
and let us assume that $x_2^1$ is the closest point to $x_2^0$ satisfying this condition
in the sense that $|\tilde{\Phi}'| < C_1 2^{-kl}$ between $x_2^0$ and $x_2^1$.
Then the mean value theorem implies that there is a point between $x_2^0$ and $x_2^1$ where we have $|\tilde{\Phi}''| \geq C_2 2^{-kl}$,
where $C_2$ can be taken to tend to $\infty$ as $C_1$ tends to $\infty$.
On the other hand, the equation \eqref{subsubsection_stationary_1_phase_ODE} implies that
on the interval between $x_2^0$ and $x_2^1$ we have $|\tilde{\Phi}''| \leq C_3 2^{-kl}$,
where we can take $C_3$ to be a fixed constant if $\delta_1$ is taken to be sufficiently small when $C_1$ and $C_2$ are large
(we can always take say $C_1$ of size $\delta_1^{-1}$).
This is a contradiction, i.e., the point $x_2^1$ where one has $|\tilde{\Phi}'| \geq C_1 2^{-kl}$ for a too large $C_1$
cannot exist within the integration domain.

Now in the case that $|\tilde{\Phi}'| \gtrsim 2^{-kl}$ we may apply integration by parts and get an estimate summable in $l$.
Let us therefore assume $|\tilde{\Phi}'| \lesssim 2^{-kl}$,
in which case we have $|\tilde{\Phi}''| \sim 2^{-kl}$ by \eqref{subsubsection_stationary_1_phase_ODE}.
Then the van der Corput lemma implies that
\begin{align*}
2^{-l(1/2-\sigma)(k-2)} |\widehat{\nu}_{l} (\xi)| \lesssim (1+|\xi_3|)^{-1}.
\end{align*}

The problem is now that a priori we may not sum this estimate in $l$.
Luckily, it turns out that one can pin down the size of $2^{-l}$,
which in turn will pin down the number $l$ to a finite set of size $\mathcal{O}(1)$.
In order to prove this we use the expression \eqref{subsubsection_stationary_1_phase_first_derivative}
and the normal form of $\phi_l$:
\begin{align*}
\phi_l(x,\delta,2^{-l}) =
x_1^2 \tilde{r}_1(\delta_1 x_1, \varepsilon x_2) + 2^{-kl} x_2^k \tilde{r}_2(\delta_1 x_1, \varepsilon x_2),
\end{align*}
from which one has
\begin{align}
\label{subsubsection_eq_partial_2_phi_l}
(\partial_2 \phi_l)(x,\delta,2^{-l}) =
  \varepsilon x_1^2  (\partial_2 \tilde{r}_1)(\delta_1 x_1, \varepsilon x_2)
+ 2^{-kl} x_2^{k-1} \tilde{r}_3(\delta_1 x_1, \varepsilon x_2),
\end{align}
where $\tilde{r}_3(0) \neq 0$ is a smooth function.

The idea is as follows.
First, by compactness we may assume that we integrate in $x_2$
over a sufficiently small neighbourhood of a point $x_2^0$ satisfying $|x_2^0| \sim 1$.
In particular, we may write
\begin{align*}
\tilde{\Phi}'(x_2, s_1,s_2,\delta_1,\varepsilon, 2^{-kl})
  &= \tilde{\Phi}'(x_2^0, s_1,s_2,\delta_1,\varepsilon, 2^{-kl}) + \smallO (|\tilde{\Phi}''|) \\
  &= \tilde{\Phi}'(x_2^0, s_1,s_2,\delta_1,\varepsilon, 2^{-kl}) + \smallO (2^{-kl}).
\end{align*}
Thus, it suffices to prove that
\begin{align*}
|\tilde{\Phi}'(x_2^0, s_1,s_2,\delta_1,\varepsilon, 2^{-kl})| = |s_2 \varepsilon + \partial_2 \phi_l(x_1^c, x_2^0, s_1,s_2,\delta_1,\varepsilon, 2^{-kl})| \ll 2^{-kl}
\end{align*}
can happen only for finitely many $l$.
If the above inequality does not hold, then we may simply integrate by parts and are able to simply sum in $l$ afterwards.

If we now develop both terms in $\partial_2 \phi_l$ in the $\varepsilon$ and $2^{-kl}$ variables
(recall that $x_1^c$ depends on both $\varepsilon$ and $2^{-kl}$),
then one gets that the expression for $\tilde{\Phi}'$ is of the form
\begin{align*}
s_2 \varepsilon
+ \sum_{i = 1}^{k-1} \varepsilon^i f_i(x_2^0, s_1, s_2, \delta_1)
+ 2^{-kl} g_0(x_2^0, s_1, s_2, \delta_1)
+ \smallO (2^{-kl}),
\end{align*}
where we used the fact that $\varepsilon^k = (\delta_2 2^{-l})^k \ll 2^{-kl}$.
Note that we have $|g_0| \sim 1$ by \eqref{subsubsection_eq_partial_2_phi_l}
(and also $|f_1| \sim 1$, but this is not important).
We have to find out how many $l$'s satisfy
\begin{align*}
\Big| \tilde{f}_1(x_2^0, s_1, s_2, \delta_1)
+ \sum_{i = 2}^{k-1} \varepsilon^{i-1} f_i(x_2^0, s_1, s_2, \delta_1)
+ \varepsilon^{-1} 2^{-kl} g_0(x_2^0, s_1, s_2, \delta_1)
+ \smallO (\varepsilon^{-1} 2^{-kl}) \Big|
\ll \varepsilon^{-1} 2^{-kl},
\end{align*}
where $\tilde{f}_1(x_2^0, s_1, s_2, \delta_1) \coloneqq s_2 + f_1(x_2^0, s_1, s_2, \delta_1)$.
But now one easily shows that this inequality is possible
only if at least two of the terms are comparable in size (precisely because $|g_0| \sim 1$).
This implies in particular that we can determine $l$ in terms of $(x_2^0, s_1, s_2, \delta_1)$,
which finishes the proof.

We mention that, interestingly, one can prove that $f_2(x_2^0,s_1,s_2,0) = 0$,
a consequence of the relation \eqref{section_strichartz_mitigating_v_coef_relation}.


\subsection{Normal form (vi)}
\label{subsection_strichartz_mitigating_vi}
Here we obtain
\begin{align*}
\tilde{\phi}(x,\delta) &= (x_2 - x_1^2 \psi(\delta_1 x_1))^{k} r(\delta_1 x_1, \delta_2 x_2), \\
\mathcal{H}_{\tilde{\phi}}(x, \delta) &= (x_2 - x_1^2 \psi(\delta_1 x_1))^{2k-3} q(\delta_1 x_1, \delta_2 x_2),
\end{align*}
where $r(0), q(0), \psi(0) \neq 0$.
Thus, we may localize to the part where $|x_2 - x_1^2 \psi(\delta_1 x_1)| \ll 1$, i.e.,
it is sufficient to consider the measure
\begin{align*}
f \mapsto \int f(x,\tilde{\phi}(x,\delta)) \, |(x_2 - x_1^2 \psi(\delta_1 x_1))^{2k-3}q(\delta_1 x_1, \delta_2 x_2)|^{\sigma} \, \chi_0(\tilde{\phi}(x,\delta)) \, a(x, \delta) \mathrm{d}x
\end{align*}
since $|\tilde{\phi}(x,\delta)| \sim |x_2 - x_1^2 \psi(\delta_1 x_1)|^k$.
Note that here we have $|x_1| \sim 1 \sim |x_2|$.

Now, the next idea is to use, as in \cite{IM16}, a Littlewood-Paley decomposition in the $x_3$ direction
(for the mixed norm Littlewood-Paley theory see \cite{Liz70})
and reduce ourselves to proving the Fourier restriction estimate for the measure piece
\begin{align*}
\langle \nu_{l}, f \rangle
  = \int f(x,\tilde{\phi}(x,\delta)) \, |(x_2 - x_1^2 \psi(\delta_1 x_1))^{2k-3}q(\delta_1 x_1, \delta_2 x_2)|^{\sigma} \,
    \chi_1(2^{kl}(\tilde{\phi}(x,\delta))) \, a(x, \delta) \mathrm{d}x.
\end{align*}
Using the coordinate transformation $x_2 \mapsto x_2 + x_1^2 \psi(\delta_1 x_1)$ we may write
\begin{align*}
\langle \nu_{l}, f \rangle
  = \int &f(x_1, x_2 + x_1^2 \psi(\delta_1 x_1), x_2^k \tilde{r}(\delta_1 x_1, \delta_2 x_2)) \\
    &\times |x_2^{2k-3} \tilde{q}(\delta_1 x_1, \delta_2 x_2)|^{\sigma} \, \chi_1(2^{kl}x_2^k \tilde{r}(\delta_1 x_1, \delta_2 x_2)) \, \tilde{a}(x, \delta) \mathrm{d}x,
\end{align*}
where $|\tilde{r}| \sim 1$ is a smooth function.
Finally, we use the coordinate transformation $x_2 \mapsto 2^{-l} x_2$ and rescale $f$ in the third coordinate.
Then we are reduced to proving the Fourier restriction estimate
\begin{align}
\label{subsection_strichartz_mitigating_vi_FRP}
\Vert \widehat{f} \Vert^2_{L^2(\mathrm{d}\tilde{\nu}_{j,l})}
   \leq C 2^{l(1-3\sigma)} \Vert f\Vert^2_{L^{\mathfrak{p}_3}_{x_3}(L^{\mathfrak{p}_1}_{(x_1,x_2)})},
\end{align}
for the measure
\begin{align}
\label{subsection_strichartz_mitigating_vi_final_measure}
\langle \tilde{\nu}_{j,l}, f \rangle
  = \int &f(x_1, 2^{-l}x_2 + x_1^2 \psi(\delta_1 x_1), x_2^k \tilde{r}(\delta_1 x_1, 2^{-l}\delta_2 x_2)) a(x, \delta, 2^{-l}) \mathrm{d}x
\end{align}
where $a$ is supported so that $|x_1| \sim 1$ and $|x_2| \sim 1$.
Now we note that the estimate for $\sigma = 0$ follows by Plancherel,
while the estimate for $\sigma = 1/3$ is going to be shown in Section \ref{section_strichartz_mixed}
since the form of the measure $\tilde{\nu}_{j,l}$ coincides with the form in \eqref{section_strichartz_mixed_measure_rescaled} below.
Interpolating, we obtain the estimate for all $0 \leq \sigma \leq 1/3$.

Note that when $1/p_1' = 1/p_3' = 1/4$,
then one can simplify the proof by a modification of Lemma \ref{lemma_simple_estimates}, i.e.,
by using the Fourier decay of $\tilde{\nu}_{j,l}$,
which is easily seen to be
\begin{align*}
|\widehat{\tilde{\nu}}_{j,l}(\xi)| \lesssim 2^{l/2} (1+|\xi|)^{-1},
\end{align*}
and by using the Plancherel theorem, but this time in the $(x_1,x_3)$-plane
(which is why it works only for $1/p_1' = 1/p_3'$)
since the mapping $(x_1,x_2) \mapsto (x_1, x_2^k \tilde{r}(\delta_1 x_1, 2^{-l}\delta_2 x_2))$ has Jacobian of size $\sim 1$.
In fact, in Section \ref{section_strichartz_mixed} we shall combine
this idea of using Lemma \ref{lemma_simple_estimates} together with the methods used in \cite{IM16} (and \cite{Pa19}).

\subsubsection{A Knapp-type example}
\label{subsubsection_strichartz_mitigating_vi_Knapp}

Let us now show by using a Knapp-type example that
one cannot get the estimate \eqref{subsection_strichartz_mitigating_vi_FRP} for $\sigma > 1/3$.
Let us consider the function $\varphi_\epsilon$ defined by
\begin{align*}
\widehat{\varphi_\epsilon}(x)
  = \chi_1 \Big(\frac{x_1}{\epsilon^\delta}\Big) \chi_1 \Big(\frac{x_2}{\epsilon^{2\delta}}\Big) \chi_1 \Big(\frac{x_3}{\epsilon} \Big)
\end{align*}
for some small $\epsilon$ and $\delta$.
Its mixed $L^p$ norm is
\begin{align*}
\Vert \varphi_\epsilon\Vert_{L^p(\R^3)} \sim \epsilon^{\frac{3\delta}{p_1'} + \frac{1}{p_3'}}.
\end{align*}
Now, in the integral
\begin{align*}
\int |\widehat{\varphi_\epsilon}|^2 \mathrm{d}\nu
  = \int |\widehat{\varphi_\epsilon}|^2(x, \phi_{\text{loc}}(x)) a(x) |\mathcal{H}_{\phi_\text{loc}}(x)|^\sigma \mathrm{d}x
\end{align*}
we integrate over the set
\begin{align*}
D_\epsilon^0
\coloneqq \{ x \in \R^2 :
|x_1| \lesssim \epsilon^\delta, \quad
|x_2| \lesssim \epsilon^{2\delta}, \quad
|\phi_{\text{loc}}(x)| \sim |x_2-x_1^2 \psi(x_1)|^k \lesssim \epsilon \}
\end{align*}
by definition of $\varphi_\epsilon$.
If $\delta$ is sufficiently small, $D_\epsilon^0$ contains the set
\begin{align*}
 D_\epsilon \coloneqq \{ x \in \R^2 :
|x_1| \lesssim \epsilon^\delta, \quad
|\phi_{\text{loc}}(x)| \sim |x_2-x_1^2 \psi(x_1)| \lesssim \epsilon^{1/k} \},
\end{align*}
and so if the Fourier restriction estimate holds, one has
\begin{align*}
\epsilon^{\frac{6\delta}{p_1'} + \frac{2}{p_3'}} \sim \Vert \varphi_\epsilon \Vert^2_{L^p(\R^3)} \gtrsim
\int |\widehat{\varphi_\epsilon}|^2 \mathrm{d}\nu
  &\gtrsim \int_{D_\epsilon} |x_2-x_1^2 \psi(x_1)|^{\sigma(2k-3)} \mathrm{d}x \\
  &\sim \epsilon^\delta \int_{|y| \lesssim \epsilon^{1/k}} |y|^{\sigma(2k-3)} \mathrm{d}y \\
  &\sim \epsilon^{\delta+(\sigma(2k-3)+1)/k}.
\end{align*}
Letting $\epsilon$ and then $\delta$ tend to $0$ we obtain the condition
\begin{align*}
\frac{1}{p_3'} \leq \frac{\sigma(2k-3)+1}{2k} = \sigma + \frac{1-3\sigma}{2k}.
\end{align*}
Since we are interested in $1/p_1' = 1/2-\sigma$ and $1/p_3' = \sigma$,
the above inequality implies precisely $\sigma \leq 1/3$.



\section{Fourier restriction without a mitigating factor}
\label{section_strichartz_mixed}

Here we prove Theorem \ref{theorem_Strichartz}, i.e., the estimate
\begin{align*}
\Vert \widehat{f} \Vert_{L^2(\mathrm{d}\mu)} \leq C \Vert f\Vert_{L^{p_3}_{x_3}(L^{p_1}_{(x_1,x_2)})}
\end{align*}
for $\mu$ the surface measure of the form
\begin{align*}
\langle \mu, f \rangle = \int_{\R^2\setminus\{0\}} f(x,\phi(x)) \, |x|_{\alpha}^{2\vartheta} \, \mathrm{d}x,
\end{align*}
where
\begin{align*}
\vartheta = \frac{|\alpha|}{p_1'}+\frac{\rho}{p_3'} - \frac{|\alpha|}{2}.
\end{align*}
Recall that this $\vartheta$ is chosen (depending on $(p_1,p_3) \in (1,2]^2$) precisely so that
the above restriction estimate is equivalent to the local estimate
\begin{align*}
\Vert \widehat{f} \Vert_{L^2(\mathrm{d}\mu_0)} \leq C \Vert f\Vert_{L^{p_3}_{x_3}(L^{p_1}_{(x_1,x_2)})},
\end{align*}
where $\mu_0$ is the surface measure
\begin{align}
\label{section_strichartz_mixed_original_measure_compact}
\langle \mu_0, f \rangle = \int_{\R^2\setminus\{0\}} f(x,\phi(x)) \, \eta(x) \, |x|_{\alpha}^{2\vartheta} \, \mathrm{d}x
\end{align}
for $\eta \in C_c^\infty(\R^2\setminus\{0\})$ identically equal to $1$ in an annulus.

Note that $|x|_{\alpha}^{2\vartheta}$ is not smooth near the axes.
Luckily, we shall be able to circumvent this problem by using the Littlewood-Paley theorem to localize away from the axes,
as was done in the case with the mitigating factor.

Now we recall the necessary conditions from \cite[Proposition 2.1]{Pa19} obtained through the Knapp-type examples.
Let us fix a point $v$ such that $\eta(v) \neq 0$ and let $\eta_v$ be a smooth cutoff function
identically equal to $\eta$ on a small neighbourhood of $v$.
It suffices to consider the measure
\begin{align}
\label{section_strichartz_mixed_original_measure_loc}
\langle \mu_{0,v}, f \rangle = \int_{\R^2\setminus\{0\}} f(x,\phi_v(x-v)) \, \eta_v(x) \, |x|_{\alpha}^{2\vartheta} \, \mathrm{d}x,
\end{align}
where we recall from the introduction that
\begin{align*}
\phi_v(x) = \phi(x+v) - \phi(v) - x \cdot \nabla \phi(v).
\end{align*}
We recall also that $h_{\text{lin}}(\phi,v)$ is the linear height of $\phi_v$ at its origin,
and that $h(\phi,v)$ is its Newton height.

If $\phi$ satisfies (LA) at $v$, then $h_{\text{lin}}(\phi,v) = h(\phi,v)$,
and according to \cite[Proposition 2.1]{Pa19} the only necessary condition is
\begin{align}
\label{section_strichartz_mixed_Knapp_LA}
\frac{1}{p_1'} + \frac{h_{\text{lin}}(\phi,v)}{p_3'} \leq \frac{1}{2}.
\end{align}
If $\phi$ does not satisfy (LA) at $v$, then from Proposition \ref{proposition_normal_forms} we know that
this is only possible for the normal form
\begin{align*}
\phi_{v,y}(y) \coloneqq (y_2 - y_1^2 \psi(y_1))^k r(y),
\end{align*}
where $r(0) \neq 0$, $\psi(0) \neq 0$, and $2 \leq k < \infty$,
since this is the only non-adapted normal form.
Again, \cite[Proposition 2.1]{Pa19} implies that in this case we have two necessary conditions, namely
\begin{align}
\label{section_strichartz_mixed_Knapp_NLA}
\frac{1}{p_1'} + \frac{h_{\text{lin}}(\phi,v)}{p_3'} \leq \frac{1}{2} \qquad \text{and} \qquad
\frac{h(\phi,v)}{p_3'} \leq \frac{1}{2},
\end{align}
where $h(\phi,v) = k$ and $h_{\text{lin}}(\phi,v) = 2k/3$.
Note that in the case $h_{\text{lin}}(\phi,v) = h(\phi,v)$ the second condition in \eqref{section_strichartz_mixed_Knapp_NLA} would be redundant.
Thus, if we now vary $v$ over the points where $\eta(v) \neq 0$, then we obtain the conditions
\begin{align*}
\frac{1}{p_1'} + \frac{h_{\text{lin}}(\phi)}{p_3'} \leq \frac{1}{2} \qquad \text{and} \qquad
\frac{h(\phi)}{p_3'} \leq \frac{1}{2},
\end{align*}
where we remind that $h_{\text{lin}}(\phi)$ and $h(\phi)$ are respectively global linear height and global Newton height
defined as in \eqref{subsection_intro_results_global_heights}.

At all points $v$ where (LA) is satisfied and where $|x|_{\alpha}^{2\vartheta}$ is smooth (i.e., $v$ is not on an axis)
we get the local Fourier restriction estimate in the range \eqref{section_strichartz_mixed_Knapp_LA}
directly from \cite[Proposition 4.2]{Pa19}.
We shall briefly touch upon what happens in the case when $v$ is situated on the axis in Subsection \ref{subsection_strichartz_mixed_non_smooth}.
In this case one has to only slightly adjust the proofs in Section \ref{section_strichartz_mitigating}.

In the case when (LA) is not satisfied at $v$ let us call the pair $(\mathfrak{p}_1, \mathfrak{p}_3) = (\mathfrak{p}_1(v), \mathfrak{p}_3(v))$ given by
\begin{align*}
\Big( \frac{1}{\mathfrak{p}_1'}, \frac{1}{\mathfrak{p}_3'} \Big) = \Big( \frac{1}{2} - \frac{h_{\text{lin}}(\phi,v)}{2 h(\phi,v)}, \frac{1}{2 h(\phi,v)} \Big)
\end{align*}
the critical exponent of $\phi$ at $v$.
It is obtained as the intersection of the lines
\begin{align*}
\frac{1}{p_1'} + \frac{h_{\text{lin}}(\phi,v)}{p_3'} &= \frac{1}{2} \qquad\qquad \text{and} \qquad\qquad
\frac{1}{p_3'} = \frac{1}{2 h(\phi,v)}
\end{align*}
in the $(1/p_1',1/p_3')$ plane.
Thus, for the local estimate in this case it suffices to prove the inequality
\begin{align*}
\Vert \widehat{f} \Vert_{L^2(\mathrm{d}\mu_{0,v})} \leq C \Vert f\Vert_{L^{p_3}_{x_3}(L^{p_1}_{(x_1,x_2)})},
\end{align*}
where
\begin{align*}
\langle \mu_{0,v}, f \rangle = \int f(x_1,x_2,\phi_v(x_1,x_2)) \, \eta_v(x_1,x_2) \, \mathrm{d}x_1 \mathrm{d}x_2
\end{align*}
and
\begin{align*}
\Big( \frac{1}{p_1'}, \frac{1}{p_3'}) \in
   \Bigg\{ \Big( 0, \frac{1}{2 h(\phi)} \Big),
           \Big( \frac{1}{2}, 0\Big),
           \Big( \frac{1}{\mathfrak{p}_1'(v)}, \frac{1}{\mathfrak{p}_3'(v)} \Big) \Bigg\},
\end{align*}
since then we get the full range from the necessary conditions by interpolation.
We shall only give a sketch of the proof in this case too
in Subsections \ref{subsection_strichartz_mixed_FRP_prelim} and \ref{subsection_strichartz_mixed_FRP_spectral},
since it is almost identical to a type of singularity considered in \cite[Subsection 5.5]{Pa19}.


\subsection{Fourier restriction for the adapted case}
\label{subsection_strichartz_mixed_non_smooth}

As mentioned, in the adapted case one needs to prove the Fourier restriction estimate for
$(p_1,p_3) \in (1,2)^2$ satisfying
\begin{align*}
\frac{1}{p_1'} + \frac{h_{\text{lin}}(\phi,v)}{p_3'} = \frac{1}{2},
\end{align*}
and the part of the measure where the amplitude in \eqref{section_strichartz_mixed_original_measure_compact} is smooth
the restriction estimate is already proven in \cite{Pa19}.

Now the amplitude in \eqref{section_strichartz_mixed_original_measure_compact}
(in particular the function $x \mapsto |x|_\alpha^{2\vartheta}$) is
in general not smooth along the axes $x_1=0$ and $x_2=0$.
Namely, on the $x_1=0$ axis one can take only the derivatives (of the amplitude) in the $x_2$ direction, and analogously
on the $x_2=0$ axis one can take only derivatives in the $x_1$ direction.
Note that the only possible non-adapted normal form appears only away from the axes.

Let us consider without loss of generality what happens for the point $v = (v_1,0)$ on the axis $x_2 = 0$
and its associated measure $\mu_{0,v}$ defined in \eqref{section_strichartz_mixed_original_measure_loc}.
We shall only briefly sketch what one needs to do in order to prove the Fourier restriction estimate when
the amplitude is not smooth in the $x_2$ direction at $v$.
Since we are dealing only with adapted normal forms,
it suffices to obtain an appropriate estimate on the Fourier transform,
after which one can apply Lemma \ref{lemma_simple_estimates}
or its modification such as \cite[Lemma 3.8]{Pa19}.
Often we shall need to use the Littlewood-Paley theorem in order to localize away from the axis.

According to the normal forms listed at the end of Subsection \ref{subsection_forms_considerations},
and under the condition (H1), we have the following cases.

\medskip

{\bf Case 1.}
If (under the notation of Section \ref{section_forms}) we have $k = \infty$, then
by the considerations from Subsection \ref{subsection_forms_flat} the phase at $v$ is
\begin{align*}
\phi_v(x-v) = (x_1-v_1)^{\tilde{k}} + \varphi(x_1,x_2),
\end{align*}
where $2 \leq \tilde{k} < \infty$ and $\varphi$ is a flat function at $v$.
This corresponds to Normal form (i.y2) and we have $h_{\text{lin}}(\phi,v) = \tilde{k}$.
Since $|x|_\alpha^{2\vartheta}$ is still smooth in the $x_1$ direction,
one can use the van der Corput lemma in the $x_1$ direction and
get that the decay of the Fourier transform of $\mu_{0,v}$ is $(1+|\xi|)^{-1/\tilde{k}}$.
This now implies the desired estimate
(see the result \cite[Theorem 1.2]{KT98} or the results in \cite[Subsection 3.3]{Pa19},
or apply an appropriate modification of Lemma \ref{lemma_simple_estimates}).

\medskip

If $2 \leq k < \infty$, then we have three further cases.

{\bf Case 2.}
Let us consider the phase
\begin{align*}
\phi_v(x) = x_2^k r(x),
\end{align*}
where $r(v) \neq 0$ and $k \geq 2$. In this case the linear height is $h_{\text{lin}}(\phi,v) = k$.
Here the idea is to apply the Littlewood-Paley theorem in order to localize away from the axis $x_2 = 0$, and rescale afterwards.
Since the essentially same thing was done in Section \ref{section_strichartz_mitigating} for this type of singularity
(see the proof for Normal form (i) in Subsection \ref{subsection_strichartz_mitigating_i}),
let us just briefly mention the main differences compared to there.
Obviously, one scales differently the measure pieces away from the axis obtained by applying the Littlewood-Paley theorem
since here we consider different exponents $(p_1,p_3)$.
The main difference is that we do not use the Hessian determinant to obtain a decay on the Fourier transformation of the rescaled measure piece
(since the Hessian determinant may vanish of infinite order as only (H1) is assumed and not the stronger condition (H2)),
but rather directly from the form of the phase above.
This we may now do since the new amplitude for the rescaled measure pieces is now smooth.

{\bf Case 3.}
Let us now consider the case when the phase is nondegenerate, i.e.,
the Hessian determinant does not vanish at $v$ (and in particular $h_{\text{lin}}(\phi,v)$ = 1).
Here we use the Littlewood-Paley theorem as in Case 2,
but after rescaling we use the size of the Hessian determinant of the new phase
to get a decay on the Fourier transform of the measure
(as was done in Section \ref{section_strichartz_mitigating} for Normal forms (i), (ii), and (iii)).

{\bf Case 4.}
The final case is when
(after an affine change to $y$ or $w$ coordinates from Section \ref{section_forms}) we have
\begin{align*}
\phi_{v,u}(u) = u_1^2 r_1(u) + u_2^{k_0} r_2(u),
\end{align*}
where $3 \leq k_0 \leq \infty$, $r_1(0) \neq 0$, and in case when $k_0 < \infty$
then $r_2(0) \neq 0$ and $h_{\text{lin}}(\phi,v) = 2 k_0 / (2+k_0)$.
If $k_0 = \infty$ then $h_{\text{lin}}(\phi,v) = 2$, and
the above equality holds in the sense that we can take any $k_0 \geq 0$ and $r_2$ flat.
Inspecting the $y$ and $w$ coordinates from Section \ref{section_forms} we see that the $x_2=0$ axis corresponds to the $u_2=0$ axis.

If $k_0 = \infty$, we can argue in the same way as in the case $k = \infty$ above
(here it is critical that $\partial_{u_1} = c \partial_{x_1}, c \neq 0$, in order to be able to apply the van der Corput lemma in the smooth direction).

Otherwise, if $k_0$ is finite, we proceed again with a Littlewood-Paley decomposition in the $u_2$ direction
(as was done in Subsection \ref{subsection_strichartz_mitigating_ii} for Normal forms (ii) and (iii))
in order to get a smooth amplitude.
At this point one gets that the estimate on the decay of the Fourier transform is $2^{k_0 l/2}(1+|\xi|)^{-1}$
by using the size of the Hessian determinant.
Since the new rescaled phase is (compare with \eqref{subsection_strichartz_mitigating_ii_scaling_phase})
\begin{align*}
u_1^2 r_1(u_1, 2^{-l}u_2) + 2^{-k_0 l} u_2^{k_0} r_2(u_1, 2^{-l}u_2),
\end{align*}
by applying the van der Corput lemma in $u_1$ we also have the decay estimate $(1+|\xi|)^{-1/2}$.
Interpolating these two estimates gives the decay $2^l (1+|\xi|)^{-(2+k_0)/(2 k_0)}$,
which turns out to be precisely what one needs when interpolating with the Plancherel estimate.

\subsection{Fourier restriction for the non-adapted case: preliminaries}
\label{subsection_strichartz_mixed_FRP_prelim}

Let us fix a phase function $\phi_\text{loc}$ of the form
\begin{align*}
\phi_\text{loc}(x) = (x_2 - x_1^2 \psi(x_1))^k r(x),
\end{align*}
where $\psi(0), r(0) \neq 0$ and $k \in \N$, $k \geq 2$.
The adapted coordinates are obtained by the smooth transformation $y_1 = x_1$, $y_2 = x_2 - x_1^2 \psi(x_1)$:
\begin{align*}
\phi_\text{loc}^a(y) \coloneq y_2^k r^a(y),
\end{align*}
where $r^a(0) \neq 0$.
Thus, the Newton height of $\phi_\text{loc}$ is $k$ and the Newton distance is $d \coloneq 2 k /3$
(which coincides with the linear height $h_{\text{lin}}$).
The Varchenko exponent is $0$ since in adapted coordinates the principal face is noncompact.
Then from e.g. \cite[Subsection 3.3]{Pa19} we know that we automatically have the Fourier restriction estimate
\begin{align}
\label{section_strichartz_mixed_FRP}
\Vert \FT f \Vert_{L^2(\mathrm{d}\nu)} \lesssim \Vert f \Vert_{L^{p_3}_{x_3}(L^{p_1}_{(x_1,x_2)})}, \qquad f \in \mathcal{S}(\R^3),
\end{align}
for the exponents
\begin{align*}
\Big(\frac{1}{p_1'}, \frac{1}{p_3'}\Big) = \Big(0, \frac{1}{2 k}\Big) \qquad \text{and} \qquad
\Big(\frac{1}{p_1'}, \frac{1}{p_3'}\Big) = \Big(\frac{1}{2}, 0\Big),
\end{align*}
and where the measure $\nu$ is defined through
\begin{align}
\label{section_strichartz_mixed_measure}
\langle \nu, f \rangle = \int f(x_1,x_2,\phi_\text{loc}(x_1,x_2)) \, a(x_1,x_2) \, \mathrm{d}x_1 \mathrm{d}x_2,
\end{align}
where $a \in C_c^\infty(\R^2)$ is a nonnegative function supported in a small neighbourhood of the origin.
It remains to obtain the Fourier restriction estimate for the critical exponent, which in this case is
\begin{align}
\label{section_strichartz_mixed_critical_exponent}
\Big(\frac{1}{\mathfrak{p}_1'}, \frac{1}{\mathfrak{p}_3'}\Big) = \Big(\frac{1}{6}, \frac{1}{2 k}\Big).
\end{align}
The case $k = 2$ has been solved in \cite{Pa19}.
In the case $k = 3$ the critical exponent lies on the diagonal and
so this case has already been solved in \cite{IM16}.

In the case $k \geq 4$ we have $1/\mathfrak{p}_1' > 1/\mathfrak{p}_3'$ and
so one would need to slightly modify the methods used in \cite{Pa19}
(i.e., the methods for the case $h_{\text{lin}}(\phi) < 2$)
since there one interpolated between two points of the form
\begin{align*}
\Big(\frac{1}{p_1'}, \frac{1}{p_3'}\Big) = \Big(0, \frac{s}{2}\Big) \qquad \text{and} \qquad
\Big(\frac{1}{p_1'}, \frac{1}{p_3'}\Big) = \Big(\frac{1}{2}, \frac{1}{2}\Big),
\end{align*}
for some $0 < s < 1/k$.
In the case $1/\mathfrak{p}_1' > 1/\mathfrak{p}_3'$ in general one would need to interpolate between three points
\begin{align*}
\Big(\frac{1}{p_1'}, \frac{1}{p_3'}\Big) = \Big(0, 0\Big), \qquad
\Big(\frac{1}{p_1'}, \frac{1}{p_3'}\Big) = \Big(\frac{1}{2}, \frac{1}{2} \Big), \qquad \text{and} \qquad
\Big(\frac{1}{p_1'}, \frac{1}{p_3'}\Big) = \Big(\frac{1}{2}, 0 \Big).
\end{align*}
In  particular, if one has an operator $T : L^p \to L^{p'}$,
where here we denote $L^p = L^{p_3}_{x_3}(L^{p_1}_{(x_1,x_2)})$ for $p = (p_1, p_3)$,
satisfying the estimates
\begin{align}
\label{section_strichartz_mixed_basic_estimates}
\Vert T \Vert_{L^p \to L^{p'}} &\lesssim A_{1} \qquad \text{for} \quad \Big(\frac{1}{p_1'}, \frac{1}{p_3'}\Big) = \Big(0, 0\Big), \nonumber \\
\Vert T \Vert_{L^p \to L^{p'}} &\lesssim A_{2} \qquad \text{for} \quad \Big(\frac{1}{p_1'}, \frac{1}{p_3'}\Big) = \Big(\frac{1}{2}, \frac{1}{2}\Big), \\
\Vert T \Vert_{L^p \to L^{p'}} &\lesssim A_{3} \qquad \text{for} \quad \Big(\frac{1}{p_1'}, \frac{1}{p_3'}\Big) = \Big(\frac{1}{2}, 0\Big), \nonumber
\end{align}
then by interpolation one has the estimate
\begin{align*}
\Vert T \Vert_{L^p \to L^{p'}} &\lesssim A_{1}^{2/3} \, A_{2}^{1/k} \, A_{3}^{(k-3)/(3 k)}
\qquad \text{for} \quad \Big(\frac{1}{p_1'}, \frac{1}{p_3'}\Big) = \Big(\frac{1}{6}, \frac{1}{2 k}\Big).
\end{align*}

In our special case we shall not use the above general approach
since we recall that when we considered the case when the mitigating factor was present
(to be more precise, the case of Normal form (vi) considered in Subsection \ref{subsection_strichartz_mitigating_vi}),
after performing some decompositions and rescalings one got measure pieces
for which one needed to prove the Fourier restriction estimate for the exponent
\begin{align}
\label{section_strichartz_mixed_critical_exponent_stronger}
\Big(\frac{1}{p_1'}, \frac{1}{p_3'}\Big) = \Big(\frac{1}{6}, \frac{1}{3}\Big).
\end{align}
In the current case without the mitigating factor it turns out that we shall get the same measure pieces,
but for which we need to prove the Fourier restriction estimate for the exponent \eqref{section_strichartz_mixed_critical_exponent}.
Thus, if we have the Fourier restriction estimate for the exponent \eqref{section_strichartz_mixed_critical_exponent_stronger},
then the Fourier restriction for \eqref{section_strichartz_mixed_critical_exponent} is obtained by interpolating with the result for
\begin{align*}
\Big(\frac{1}{p_1'}, \frac{1}{p_3'}\Big) = \Big(\frac{1}{6}, 0\Big),
\end{align*}
which one can obtain by applying the 2-dimensional Fourier restriction result for curves with nonvanishing curvature.

These stronger estimates for the rescaled measure pieces do not contradict
the necessary conditions obtained by Knapp-type examples in \cite{Pa19}
since the information on the exponents and the Newton height of $\phi$ is consumed in the rescaling procedure
(which is different in this section and in Subsection \ref{subsection_strichartz_mitigating_vi}).

\medskip

Let us begin with some preliminary reductions.
By the results from \cite[Section 4.2]{Pa19}, instead of considering the whole measure \eqref{section_strichartz_mixed_measure},
we may reduce ourselves to considering the part near the principal root jet in the half plane $\{(x_1, x_2) \in \R^2 : x_1 \geq 0\}$:
\begin{align*}
\langle \nu^{\rho_1}, f \rangle = \int_{x_1 \geq 0} f(x,\phi_\text{loc}(x)) \, a(x) \, \rho_1(x) \mathrm{d}x,
\end{align*}
where
\begin{align*}
\rho_1(x) = \chi_0  \Big(\frac{x_2- \psi(0) x_1^2}{\varepsilon x_1^2}\Big)
\end{align*}
for an $\varepsilon$ which we can take to be as small as we want.

The next step is to use a Littlewood-Paley argument in the $(x_1,x_2)$-plane and the scaling by $\kappa$ dilations
\begin{align*}
\delta_r^{\kappa}(x) = (r^{\kappa_1} x_1, r^{\kappa_2} x_2)
\end{align*}
where $\kappa \coloneq (1/(2 k),1/k)$ is the weight associated to the principal face of $\phi_\text{loc}$.
Then one is reduced to proving \eqref{section_strichartz_mixed_FRP} for the measures
\begin{align*}
\langle \nu_j, f \rangle = \int f(x,\phi(x,\delta)) \, a(x,\delta) \, \mathrm{d}x,
\end{align*}
uniformly in $j$, where the function $\phi(x,\delta)$ has the form
\begin{align*}
\phi(x,\delta) \coloneqq (x_2 - x_1^2 \psi(\delta_1 x_1))^k r(\delta_1 x_1, \delta_2 x_2),
\end{align*}
where
\begin{align*}
\delta = (\delta_1,\delta_2) \coloneqq (2^{-\kappa_1 j}, 2^{-\kappa_2 j}).
\end{align*}
Note that we can take $|\delta| \ll 1$.
The amplitude $a(x,\delta) \geq 0$ is a smooth function of $(x,\delta)$ supported where
\begin{align*}
x_1 \sim 1 \sim |x_2|.
\end{align*}
We may additionally assume $|x_2 - x_1^2 \psi(0)| \ll 1$ due to $\rho_1$,
and by compactness we may in fact reduce ourselves to assuming $|(x_1,x_2) - (v_1^0, v_2^0)| \ll 1$ for some $(v_1^0, v_2^0) \in \R^2$ with $v_1^0 \sim 1$.

The following step is to again apply the Littlewood-Paley theorem,
but this time in the $x_3$ direction
(again, for the mixed norm Littlewood-Paley theory see \cite{Liz70}),
and reduce the Fourier restriction problem for $\nu_j$ to the Fourier restriction for the measures
\begin{align*}
\langle \nu_{\delta, l}, f \rangle = \int f(x,\phi(x,\delta)) \, \chi_1(2^{k l} \phi(x,\delta)) \, a(x,\delta) \, \mathrm{d}x,
\end{align*}
i.e., we need to prove
\begin{align*}
\Vert \FT f \Vert_{L^2(\mathrm{d} \nu_{\delta, l})} \lesssim \Vert f \Vert_{L^{\mathfrak{p}_3}_{x_3}(L^{\mathfrak{p}_1}_{(x_1,x_2)})}, \qquad f \in \mathcal{S}(\R^3),
\end{align*}
uniformly in $l$ and $\delta$, where $l \gg 1$ and $|\delta| \ll 1$.

Finally, we perform a change of coordinates and a rescaling.
Namely, after substituting $(x_1,x_2) \mapsto (x_1, 2^{-l}x_2+x_1^2 \psi(\delta_1 x_1))$ we get
\begin{align*}
\langle \nu_{\delta, l}, f \rangle =
  2^{-l} \int f(x_1, 2^{-l}x_2+x_1^2 \psi(\delta_1 x_1), 2^{-k l} \phi^a(x, \delta, l)) \, a(x, \delta, l) \, \mathrm{d}x,
\end{align*}
where
\begin{align*}
a(x, \delta, l) &\coloneq \chi_1(\phi^a(x,\delta,l)) \, a(x_1, 2^{-l}x_2+x_1^2 \psi(\delta_1 x_1), \delta), \\
\phi^a(x, \delta, l) &\coloneqq x_2^k r(\delta_1 x_1, \delta_2 (2^{-l}x_2+x_1^2 \psi(\delta_1 x_1))).
\end{align*}
Note that $a(x, \delta, l)$ is again supported in a domain where $x_1 \sim 1 \sim |x_2|$.
Rescaling we obtain that the Fourier restriction estimate for $\nu_{\delta, l}$ is equivalent to the estimate
\begin{align*}
\Vert \FT f \Vert_{L^2(\mathrm{d} \tilde{\nu}_{\delta, l})} \lesssim \Vert f \Vert_{L^{\mathfrak{p}_3}_{x_3}(L^{\mathfrak{p}_1}_{(x_1,x_2)})}, \qquad f \in \mathcal{S}(\R^3),
\end{align*}
for the measure
\begin{align}
\label{section_strichartz_mixed_measure_rescaled}
\langle \tilde{\nu}_{\delta, l}, f \rangle =
  \int f(x_1, 2^{-l}x_2+x_1^2 \psi(\delta_1 x_1), \phi^a(x, \delta, l)) \, a(x, \delta, l) \, \mathrm{d}x.
\end{align}

As mentioned, since this measure is of the same form as \eqref{subsection_strichartz_mitigating_vi_final_measure},
we are interested in proving the stronger estimate
\begin{align*}
\Vert \FT f \Vert_{L^2(\mathrm{d} \tilde{\nu}_{\delta, l})}
   \lesssim \Vert f \Vert_{L^{\tilde{\mathfrak{p}}_3}_{x_3}(L^{\tilde{\mathfrak{p}}_1}_{(x_1,x_2)})}, \qquad f \in \mathcal{S}(\R^3),
\end{align*}
where
\begin{align*}
\Big(\frac{1}{\tilde{\mathfrak{p}}_1'}, \frac{1}{\tilde{\mathfrak{p}}_3'}\Big) \coloneq \Big(\frac{1}{6}, \frac{1}{3}\Big).
\end{align*}
Note that we automatically have the estimate for
\begin{align*}
\Big(\frac{1}{p_1'}, \frac{1}{p_3'}\Big) = \Big(\frac{1}{6}, 0 \Big)
\end{align*}
by a classical result of Zygmund \cite{Zyg74},
since $x_1 \mapsto (x_1, 2^{-l}x_2 + x_1^2 \psi(\delta_1 x_1))$ is a curve
with curvature bounded from below uniformly in $|x_2| \sim 1$, $2^{-l} \ll 1$, and $\delta_1 \ll 1$.

\subsection{Fourier restriction for the non-adapted case: spectral decomposition}
\label{subsection_strichartz_mixed_FRP_spectral}

We begin by performing a spectral decomposition of the measure $\tilde{\nu}_{\delta, l}$.
For $(\lambda_1, \lambda_2, \lambda_3)$ dyadic numbers with $\lambda_i \geq 1$, $i = 1,2,3$,
we consider localized measures $\nu_l^\lambda$ defined through
\begin{align}
\begin{split}
\label{subsection_strichartz_mixed_FRP_spectral_measure_piece}
\widehat{\nu_l^\lambda} (\xi) =
   &\chi_1\Big(\frac{\xi_1}{\lambda_1}\Big) \, \chi_1\Big(\frac{\xi_2}{\lambda_2}\Big) \, \chi_1\Big(\frac{\xi_3}{\lambda_3}\Big)\\
   &\times \int e^{-i \Phi(x, \delta, l, \xi)} \, a(x,\delta, l) \, \chi_1(x_1) \, \chi_1(x_2) \, \mx,
\end{split}
\end{align}
where the phase function is
\begin{align}
\label{subsection_strichartz_mixed_FRP_spectral_phase}
\Phi(x, \delta, l, \xi) \coloneqq
\xi_3 \phi^a(x, \delta, l) + 2^{-l} \xi_2 x_2 + \xi_2 x_1^2 \psi(\delta_1 x_1) + \xi_1 x_1.
\end{align}
By an abuse of notation,
above whenever $\lambda_i = 1$, we consider the cutoff function $\chi_1(\xi_i/\lambda_i)$
to be actually $\chi_0(\xi_1/\lambda_1)$, i.e., it localizes so that $|\xi_i| \lesssim 1$.

Let us introduce the convolution operators $\tilde{T}_{\delta, l} f \coloneqq f * \widehat{\tilde{\nu}}_{\delta, l}$
and $T^\lambda_l f \coloneqq f * \widehat{\nu^\lambda_l}$.
Then we need to show
\begin{align*}
\Vert \tilde{T}_{\delta, l} \Vert_{L^{\tilde{\mathfrak{p}}} \to L^{\tilde{\mathfrak{p}}'}} \lesssim 1,
\end{align*}
since $\tilde{T}_{\delta, l}$ is the ``$R^* R$'' operator,
i.e., one has $\tilde{T}_{\delta_l} = (\tilde{R}_{\delta, l})^* \tilde{R}_{\delta, l}$
if $\tilde{R}_{\delta, l}$ denotes the Fourier restriction operator with respect to the surface measure $\tilde{\nu}_{\delta, l}$.
Therefore, the boundedness of $\tilde{T}_{\delta, l}$ is equivalent to the boundedness of $\tilde{R}_{\delta, l}$ by H\"older's inequality.

Our first step shall be to reduce the problem to the case when $\lambda_2 \ll 2^l$.
In order to achieve this we split the Fourier transform of $\tilde{\nu}_{\delta, l}$ as
\begin{align}
\label{subsection_strichartz_mixed_FRP_split}
\widehat{\tilde{\nu}}_{\delta, l}
= (1-\chi_0(2^{-l} \xi_2 )) \widehat{\tilde{\nu}}_{\delta, l}
+ \chi_0(2^{-l} \xi_2 )\widehat{\tilde{\nu}}_{\delta, l},
\end{align}
where we assume that $\chi_0$ is supported in a sufficiently small neighbourhood of the origin,
and we denote the respective operators for the respective terms by $T_I$ and $T_{II}$.

For the first term in \eqref{subsection_strichartz_mixed_FRP_split}
and its operator $T_I$ one uses Lemma \ref{lemma_simple_estimates} above, though with a slight modification.
First, since on the support of $(1-\chi_0(2^{-l} \xi_2 )) \widehat{\tilde{\nu}}_{\delta, l}$
we have $|\xi_2| \gtrsim 2^l$,
one can easily show by using \eqref{subsection_strichartz_mixed_FRP_spectral_phase} that now
\begin{align*}
|(1-\chi_0(2^{-l} \xi_2 )) \widehat{\tilde{\nu}}_{\delta, l}| \lesssim 2^{-l/2} (1+|\xi_3|)^{-1},
\end{align*}
as the ``worst case'' is when $|\xi_1| \sim |\xi_2|$ and $|\xi_3| \sim |2^{-l} \xi_2|$,
in which case we use stationary phase in both $x_1$ and $x_2$
(and in other cases we get a better decay by integrating by parts).
In order to obtain the Plancherel estimate $L^1(\R;L^2(\R^2)) \to L^\infty(\R;L^2(\R^2))$ in Lemma \ref{lemma_simple_estimates}
for $T_I$ it suffices to prove it for $T_{II}$ and $\tilde{T}_{\delta, l}$
(formally, one needs to actually consider the $L^2(\R^2) \to L^2(\R^2)$ estimate for a fixed $\xi_3$).
For the operator $\tilde{T}_{\delta, l}$ we get the bound $2^l$ in the same way as in Lemma \ref{lemma_simple_estimates}.
The main fact to notice is that in \eqref{section_strichartz_mixed_measure_rescaled}
the Jacobian of $(x_1,x_2) \mapsto (x_1, 2^{-l}x_2+x_1^2 \psi(\delta_1 x_1))$ is of size $2^{-l}$.
One now gets the same estimate automatically for $T_{II}$ since the $L^1$ norm of the Fourier transform
of the cutoff function $\chi_0(2^{-l} \xi_2)$ is of size $\sim 1$.
The estimate $L^{\tilde{\mathfrak{p}}} \to L^{\tilde{\mathfrak{p}}'}$ estimate for $T_{I}$ follows
with constant of size $\sim 1 = (2^{-l/2})^{2/3} (2^{l})^{1/3}$.

For the operator $T_{II}$ we shall use the spectral decomposition
\eqref{subsection_strichartz_mixed_FRP_spectral_measure_piece}
where we may now assume $\lambda_2 \ll 2^l$.
Recall that for an operator of the form $Tf = f * \widehat{g}$
the $A_1$ constant from \eqref{section_strichartz_mixed_basic_estimates} is bounded by the $L^\infty$ norm of $\widehat{g}$,
and the $A_2$ constant is bounded by the $L^\infty$ norm of $g$.
If we now furthermore have that $\widehat{g}$ has its support in the $\xi_3$ coordinate localized at $|\xi_3| \lesssim \lambda_3$,
then by \cite[Lemma 3.9]{Pa19} we have the estimate
\begin{align*}
\Vert T \Vert_{L^p \to L^{p'}} &\lesssim A_{1} \lambda_3^{1/2}, \qquad \text{for} \,\, \Big(\frac{1}{p_1'}, \frac{1}{p_3'}\Big) = \Big(0, \frac{1}{4}\Big),
\end{align*}
and so by interpolation we get
\begin{align}
\label{section_strichartz_mixed_basic_interpol}
\Vert T \Vert_{L^{\tilde{\mathfrak{p}}} \to L^{\tilde{\mathfrak{p}}'}} &\lesssim A_{1}^{2/3} A_2^{1/3} \lambda_3^{1/3}.
\end{align}

The inverse Fourier transform of \eqref{subsection_strichartz_mixed_FRP_spectral_measure_piece} is
\begin{align}
\begin{split}
\label{subsection_strichartz_mixed_FRP_spectral_space_form}
\nu_l^\lambda (x) = \lambda_1 \lambda_2 \lambda_3 \int &\widecheck{\chi}_1 (\lambda_1 (x_1-y_1)) \, \widecheck{\chi}_1 (\lambda_2 (x_2-2^{-l}y_2-y_1^2\psi(\delta_1 y_1)))\\
   &\times \widecheck{\chi}_1(\lambda_3 (x_3-\phi^a(y,\delta,l))) \, a(y, \delta, l) \, \chi_1(y_1) \, \chi_1(y_2) \, \my.
\end{split}
\end{align}
One can consider either the substitution $(z_1, z_2) = (\lambda_1 y_1, \lambda_2 2^{-l} y_2)$,
or the substitution $(z_1, z_2) = (\lambda_1 y_1, \lambda_3 \phi^a(y, \delta, l))$
(in order to carry this out one needs to consider the cases $y_2 \sim 1$ and $y_2 \sim -1$ separately),
and get
\begin{align*}
\Vert \nu_j^\lambda \Vert_{L^\infty} \lesssim \min\{2^{l} \lambda_3, \lambda_2 \}.
\end{align*}
But now since $\lambda_2 \ll 2^l$ we may take $A_2 \coloneqq \lambda_2$.

It remains to calculate the $L^\infty$ bound for the $\widehat{\nu_l^\lambda}$ function.
This we can do by estimating the oscillatory integral in \eqref{subsection_strichartz_mixed_FRP_spectral_measure_piece}.
As the calculations for the oscillatory integral in this case are almost identical to the ones in \cite[Subsection 5.5]{Pa19},
we shall only briefly explain
the case when $\lambda_1 \sim \lambda_2$, $2^{-l} \lambda_2 \ll \lambda_3 \ll \lambda_2$,
corresponding to Case 6 in \cite[Subsection 5.5]{Pa19}.
In all the other cases one gets that one can sum absolutely in the operator norm the operator pieces $T_l^\lambda$.

Let us remark that since $\lambda_2 \ll 2^l$, the case when $\lambda_1 \sim \lambda_2$, $2^{-l} \lambda_2 \sim \lambda_3$,
corresponding to Case 4 in \cite[Subsection 5.5]{Pa19}, does not appear anymore.
This is critical since in this case one would not have absolute summability,
nor would the complex interpolation method developed in \cite{IM16} work.
This is the reason why we needed to consider $T_{I}$ and $T_{II}$ separately.

\medskip

{\bf Case $\lambda_1 \sim \lambda_2$ and $2^{-l} \lambda_2 \ll \lambda_3 \ll \lambda_2$.}
As was obtained in \cite[Subsection 5.5]{Pa19}, we have
\begin{align}
\label{subsection_strichartz_mixed_FRP_spectral_complex_Fourier_estimate_2}
\Vert \widehat{\nu_l^\lambda} \Vert_{L^\infty} \lesssim \lambda_1^{-1/2} \lambda_3^{-N}
\end{align}
for any $N > 0$, that is, we have $A_1 = \lambda_1^{-1/2} \lambda_3^{-N}$, and recall that $A_2 = \lambda_2$,
Therefore \eqref{section_strichartz_mixed_basic_interpol} gives
\begin{align*}
\Vert T_l^\lambda \Vert_{L^{\tilde{\mathfrak{p}}} \to L^{\tilde{\mathfrak{p}}'}} &\lesssim \lambda_3^{-N}.
\end{align*}
In order to be able to sum in $\lambda_1 \sim \lambda_2$ we need to use the complex interpolation method from \cite{IM16}.
For a fixed $\lambda_3$ and $\zeta$ a complex number we define the measure $\mu_\zeta^{\lambda_3}$ by
\begin{align*}
\mu_\zeta^{\lambda_3} \coloneq \gamma(\zeta) \sum_{\lambda_1, \lambda_2} \lambda_1^{(1-3\zeta)/2} \nu_l^\lambda,
\end{align*}
where the sum is over $\lambda_3 \ll \lambda_2 \ll 2^l$ and $\lambda_1 \sim \lambda_2$,
and where $\gamma(\zeta) = 2^{-3(\zeta-1)/2} - 1$.
We denote the associated convolution operator by $T_\zeta^{\lambda_3}$
and we recover with $\zeta = 1/3$ the operator we want to estimate.

By a complex interpolation argument it suffices to show that
\begin{align*}
\Vert T_{it}^{\lambda_3} \Vert_{L^p \to L^{p'}} &\lesssim \lambda_3^{-N}, & \text{for}& \,\, \Big(\frac{1}{p_1'}, \frac{1}{p_3'}\Big) = \Big(0, \frac{1}{4}\Big), \nonumber \\
\Vert T_{1+it}^{\lambda_3} \Vert_{L^p \to L^{p'}} &\lesssim 1, & \text{for}& \,\, \Big(\frac{1}{p_1'}, \frac{1}{p_3'}\Big) = \Big(\frac{1}{2}, \frac{1}{2}\Big).
\end{align*}
for some $N > 0$, with constants uniform in $t \in \R$.
The first estimate follows directly from the fact that
$\widehat{\nu_l^\lambda}$ have essentially disjoint supports with respect to $\lambda$
and the estimate \eqref{subsection_strichartz_mixed_FRP_spectral_complex_Fourier_estimate_2}
(see Lemma 3.8, (i), in \cite{Pa19}),
and for the other bound we need to estimate the $L^\infty$ norm of the corresponding sum of the expressions
\eqref{subsection_strichartz_mixed_FRP_spectral_space_form}.
The proof is the same as in \cite[Subsection 5.5, Case 6]{Pa19},
up to the formal difference in the function $\phi^a$ which here behaves like $y_2^k$, and there like $y_2^2$.
Since the domain of integration in \eqref{subsection_strichartz_mixed_FRP_spectral_space_form}
is $|y_2| \sim 1$, this is not essential.
This finishes (the sketch of) the proof of the Fourier restriction for the non-adapted case,
and also the proof of Theorem \ref{theorem_Strichartz}.



\begin{appendices}
\section{Application of the Christ-Kiselev lemma}
\label{appendix_CKlemma}

Recall that we consider the nonhomogeneous initial problem
\begin{align*}
\begin{cases}
(\partial_t - i \phi(D))u(x,t) &= F(x,t), \quad (x,t) \in \R^2 \times (0,\infty),\\
\qquad \qquad \,\,\quad u(x,0) &= G(x), \quad \quad\,x \in \R^2,
\end{cases}
\end{align*}
for $F \in \mathcal{S}(\R^3)$, $G \in \mathcal{S}(\R^2)$,
where $\phi$, $\mathcal{W}$, and $(p_1,p_3) \in (1,2)^2$ are either
as in Theorem \ref{theorem_Strichartz_mitigating} or Theorem \ref{theorem_Strichartz},
and where we additionally assume $\rho \in \{0,1\}$.
Note that according to Remark \ref{remark_integrability} the weight $\mathcal{W}$ is locally integrable in $\R^2$.
The formula for the solution of the above equation is obtained through the Duhamel principle:
\begin{align}
\label{appendix_Duhamel}
u(x,t) = (e^{i\phi(D)t}G)(x) + \int_0^t (e^{i\phi(D)(t-s)}F(\cdot,s))(x,s) \mathrm{d}s.
\end{align}
Note that $u \in C^\infty(\R^2 \times \R)$.

We consider the following two surface measures (the second defined as in \eqref{section_intro_restriction_measure}):
\begin{align*}
\langle \mu_\phi, f \rangle &= \int_{\R^2\setminus\{0\}} f(x_1,x_2,\phi(x_1,x_2)) \, \mathrm{d}x, \\
\langle \mu, f \rangle &= \int_{\R^2\setminus\{0\}} f(x_1,x_2,\phi(x_1,x_2)) \, \mathcal{W}(x_1,x_2) \, \mathrm{d}x,
\end{align*}
and we assume that the Fourier restriction estimate \eqref{section_intro_restriction_estimate} for $\mu$ holds true for $(p_1,p_3) \in (1,2)^2$.
One can easily check that
\begin{align*}
(e^{i\phi(D)t}G)(x) = \IFT((\FT G) \mathrm{d}\mu_\phi) (x,t) = \IFT(\mathcal{W}^{-1} (\FT G) \mathrm{d}\mu) (x,t),
\end{align*}
and so this is precisely the Fourier extension operator of $\mu$ applied to the function $\mathcal{W}^{-1} \FT G$.
We can therefore bound the $L_{t}^{p_3'}(L_{(x_1,x_2)}^{p_1'})$ norm of this expression
by the $L^2(\mathrm{d}\mu)$ norm of $\mathcal{W}^{-1} \FT G$.

It remains to estimate the $L_{t}^{p_3'}(L_{(x_1,x_2)}^{p_1'})$ norm of the second term in \eqref{appendix_Duhamel}.
It turns out that the operator associated to this second term is closely related to the operator $f \mapsto f * \IFT \mu$
(which we know is bounded from $L_{t}^{p_3}(L_{(x_1,x_2)}^{p_1})$ to $L_{t}^{p_3'}(L_{(x_1,x_2)}^{p_1'})$
since this is the corresponding $R^* R$ operator).
Namely, one can check that
\begin{align*}
\int_0^\infty (e^{i\phi(D)(t-s)}F(\cdot,s))(x,s) \mathrm{d}s
   &= \Big( (F \, \chi_{(0,\infty)}(s)) * (\IFT \mu_\phi) \Big) (x,t),
\end{align*}
and therefore it remains to pass from $\mu_\phi$ to $\mu$ and
to pass from integrating over $(0,\infty)$ in $s$ to integrating over $(0,t)$ in $s$.

In order to do this, our first step is to use the Littlewood-Paley theorem in the $x$-direction
so that our problem is reduced to proving the boundedness of the operator
\begin{align}
\label{appendix_LP_operator}
\int_0^t (e^{i\phi(D)(t-s)} \, \eta_j(D) \,F(\cdot,s))(x,s) \mathrm{d}s
\end{align}
where $(\eta_j)_{j \in \Z}$, $\eta_j = \eta \circ \delta_{2^{-j}}$,
constitutes a partition of unity in $\R^2\setminus\{0\}$ (as in \eqref{section_prelim_partition} in Subsection \ref{subsection_prelim_scaling})
respecting the $\alpha$-mixed homogeneous dilation $\delta_{2^{-j}}$ defined in \eqref{section_intro_alpha_dilations}.
By unwinding the definition of the operator in \eqref{appendix_LP_operator} and inserting the $\mathcal{W}$ factor,
one obtains the expression (up to a universal constant)
\begin{align}
\label{appendix_LP_operator_expanded}
\int_0^t \int_{\R^2} \Bigg( \int_{\R^2} e^{i(x-y)\cdot \xi+i(t-s)\phi(\xi)} \eta_j(\xi)
\mathcal{W}(\xi) \mathrm{d}\xi \Bigg)  F_{\mathcal{W}^{-1}}(y,s) \mathrm{d}y \mathrm{d}s,
\end{align}
where $F_{\mathcal{W}^{-1}} = \IFT_{(x_1,x_2)} (\mathcal{W}^{-1} \FT_{(x_1,x_2)} F)$.
The expression within the brackets defines a convolution kernel $K_j(t-s; x-y)$ whose associated operator $T_j(t-s)$ in the $x$ variable
is a bounded mapping from $L^q(\R^2)$ to $L^{q'}(\R^2)$ for any $q \in [1,2]$ (since the integrand in the brackets is an $L^\infty_c(\R^2)$ function).
Using the dominated convergence theorem one can get strong continuity of the operator valued function $T_j : \R \to \mathcal{L}(L^{q}(\R^2); L^{q'}(\R^2))$
(which in turn, by the uniform boundedness principle, implies joint continuity $T_j : \R \times L^{q}(\R^2) \to L^{q'}(\R^2)$).

We may now apply the Christ-Kiselev lemma (for a proof of this variant see e.g. \cite[Chapter IV, Lemma 2.1]{So08}):
\begin{lemma}
Let $Y$ and $Z$ be separable Banach spaces and let $K : \R \to \mathcal{L}(Y,Z)$
be a continuous function from the real numbers
to the space of bounded linear mappings $Y \to Z$ equipped with the strong operator topology.
If the operator defined by
\begin{align*}
(T f) (t) \coloneqq \int_{\R} K(t-s) f(s) \mathrm{d}s
\end{align*}
is a bounded mapping from $L^{q}(\R,Y)$ to $L^{q'}(\R,Z)$ for some $q \in (1,2)$,
then the operator defined by
\begin{align*}
(W f) (t) \coloneqq \int_{-\infty}^{t} K(t-s) f(s) \mathrm{d}s
\end{align*}
is also a bounded mapping from $L^{q}(\R,Y)$ to $L^{q'}(\R,Z)$, and in particular
\begin{align*}
\Vert W \Vert_{L^{q}(\R,Y) \to L^{q'}(\R,Z)} \lesssim_{q} \Vert T \Vert_{L^{q}(\R,Y) \to L^{q'}(\R,Z)}.
\end{align*}
\end{lemma}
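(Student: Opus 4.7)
The plan is to prove this by the classical dyadic Whitney-type decomposition argument of Christ and Kiselev. By density and homogeneity, I would reduce to $f \in C_c(\R,Y)$ with $\Vert f\Vert_{L^q(\R,Y)} = 1$. The first step would be to define the cumulative mass function $F(t) \coloneqq \int_{-\infty}^t \Vert f(s)\Vert_Y^q \, ds$, which is a continuous nondecreasing function from $\R$ onto $[0,1]$. For each level $n \geq 0$ and each $j = 1,\dots,2^n$ I would let $I_{n,j}$ be the closed interval $F^{-1}([(j-1)2^{-n}, j 2^{-n}])$, so that $\int_{I_{n,j}} \Vert f\Vert_Y^q \, ds = 2^{-n}$ and, at each fixed $n$, these intervals are nonoverlapping and ordered from left to right.

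Next I would decompose the region $\{(s,t) : s<t\}$, up to a set which contributes nothing to $Wf$, as the pairwise disjoint union
\begin{align*}
\{(s,t) : s<t\} = \bigsqcup_{n \geq 1} \bigsqcup_{j=1}^{2^{n-1}} I_{n,2j-1} \times I_{n,2j},
\end{align*}
which is the familiar recursive split of a square: at each step keep the above-diagonal child, discard the below-diagonal child, and recurse on the two diagonal children. With this decomposition in hand, one can write (pointwise almost everywhere)
\begin{align*}
Wf(t) = \sum_{n \geq 1} \sum_{j=1}^{2^{n-1}} \chi_{I_{n,2j}}(t) \, T\bigl(\chi_{I_{n,2j-1}} f\bigr)(t),
\end{align*}
and the key observation is that at each fixed level $n$ the cutoffs $\chi_{I_{n,2j}}(t)$ have disjoint supports in $t$.

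I would then estimate in $L^{q'}(\R,Z)$ level by level. Combining the triangle inequality with the $t$-disjointness at level $n$ yields
\begin{align*}
\Bigl\Vert \sum_{j} \chi_{I_{n,2j}} T(\chi_{I_{n,2j-1}} f) \Bigr\Vert_{L^{q'}}^{q'}
 \leq \sum_{j} \Vert T\Vert^{q'} \Vert \chi_{I_{n,2j-1}} f\Vert_{L^q}^{q'}
 = 2^{n-1} \Vert T\Vert^{q'} (2^{-n})^{q'/q},
\end{align*}
since each piece carries $L^q$-mass exactly $2^{-n/q}$ and there are $2^{n-1}$ of them. Taking the $q'$-th root gives a bound of order $\Vert T\Vert \, 2^{-n(q'-q)/(qq')}$ per level, and summing in $n$ produces a geometric series that converges precisely because $q<2$, equivalently $q'>q$. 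This yields the claim with constant of the form $C_q \sim (1 - 2^{-(q'-q)/(qq')})^{-1}$ up to a universal factor.

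The main points to verify carefully are the measure-theoretic details of the decomposition — namely, that the diagonal set left out by the dyadic splitting contributes nothing to $Wf$ — and that the pointwise sum actually agrees with the integral defining $Wf$; the strong continuity of $K$ together with compact support of $f$ makes this routine. The essential mechanism is entirely contained in the dyadic orthogonality at each fixed level combined with the geometric decay coming from $q'>q$, and the endpoint $q=2$ is genuinely excluded here, consistent with the well-known fact that the Christ--Kiselev lemma fails there.
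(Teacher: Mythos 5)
Your argument is correct and is precisely the standard Christ--Kiselev dyadic (Whitney-type) decomposition proof. The paper does not prove this lemma itself but cites \cite[Chapter IV, Lemma 2.1]{So08}, which presents exactly this argument, so your approach matches the cited source.
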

Then we get that
the $L_{t}^{p_3}(L_{(x_1,x_2)}^{p_1}) \to L_{t}^{p_3'}(L_{(x_1,x_2)}^{p_1'})$ boundedness of the operator in \eqref{appendix_LP_operator_expanded}
(acting on $F_{\mathcal{W}^{-1}}$)
is implied by the $L_{t}^{p_3}(L_{(x_1,x_2)}^{p_1}) \to L_{t}^{p_3'}(L_{(x_1,x_2)}^{p_1'})$ boundedness of the operator
\begin{align*}
\int_0^\infty \int_{\R^2} \Bigg( \int_{\R^2} e^{i(x-y)\cdot \xi+i(t-s)\phi(\xi)} \eta_j(\xi)
\mathcal{W}(\xi) \mathrm{d}\xi \Bigg) &F_{\mathcal{W}^{-1}}(y,s) \mathrm{d}y \mathrm{d}s \\
   &= \Big( (F_{\mathcal{W}^{-1}} \, \chi_{(0,\infty)}(s)) * (\IFT \mu_j) \Big) (x,t),
\end{align*}
with essentially the same operator constant bound (up to a multiplicative factor which depends only on $p_3 \in (1,2)$).
Here $\mu_j$ is the localized measure defined in the same way as in \eqref{section_prelim_local_measure},
and recall that this convolution operator is bounded (uniformly in $j$).
This finishes the proof of Corollary \ref{corollary_Strichartz_equation}.

\end{appendices}


\end{document}